\begin{document}
\title[NLS approximation for the Euler-Poisson equation]
{Justification of the NLS approximation\\ for the Euler-Poisson equation}
\author{Huimin Liu and Xueke Pu}

\address{Huimin Liu \newline
Faculty of Applied Mathematics, Shanxi University of Finance and Economics, Taiyuan 030006, P.R.China}
\email{hmliucqu@163.com}

\address{Xueke Pu \newline
School of Mathematics and Information Science, Guangzhou University, Guangzhou, 510006, P.R.China} \email{puxueke@gmail.com}

\thanks{This work is supported by NSFC (11871172).}
\subjclass[2000]{35M20; 35Q35} \keywords{Modulation approximation; Nonlinear Schr\"odinger equation; Euler-Poisson equation}
% »»ÐÐÓôËÃüÁ·ÅÔÚÀ¨ºÅÀï\hfill\break\indent }

\begin{abstract}
The nonlinear Schr\"{o}dinger (NLS) equation can be derived as a formal approximation equation describing the envelopes of slowly modulated spatially and temporarily oscillating wave packet-like solutions to the ion Euler-Poisson equation. In this paper, we rigorously justify such approximation by giving error estimates in Sobolev norms between exact solutions of the ion Euler-Poisson system and the formal approximation obtained via the NLS equation.  The justification consists of several difficulties such as the resonances and loss of regularity,  due to the quasilinearity of the problem. These difficulties are overcome by introducing normal form transformation and cutoff functions and carefully constructed energy functional of the equation.
\end{abstract}

\maketitle \numberwithin{equation}{section}
\newtheorem{proposition}{Proposition}[section]
\newtheorem{theorem}{Theorem}[section]
\newtheorem{lemma}[theorem]{Lemma}
\newtheorem{remark}[theorem]{Remark}
\newtheorem{hypothesis}[theorem]{Hypothesis}
\newtheorem{definition}{Definition}[section]
\newtheorem{corollary}{Corollary}[section]
\newtheorem{assumption}{Assumption}[section]
\section{\textbf{Introduction}}
\setcounter{section}{1}\setcounter{equation}{0}

The nonlinear Schr\"{o}dinger (NLS) equation plays important roles in describing approximately slow modulations in time and space of an underlying spatially and temporarily oscillating wave packet in more complicated systems, such as the equations describing surface water waves \cite{Z} or the Euler-Poisson system describing the motion of plasma composed of ions and isothermal electrons \cite{SI}. Indeed, even at the linearized level, there are electron waves, ion acoustic waves in the Euler-Poisson system. In the current paper, we consider the NLS approximation for the amplitude of the ion oscillation in the Euler-Poisson system
\begin{subequations}\label{equation1}
\begin{numcases}{}
\partial_{t}n+\partial_{x}(nv)=0,\\
\partial_{t}v+v\partial_{x} v=-\partial_{x}{P(n)}-\partial_{x}\phi,\\
\partial_{x}^{2}\phi=e^{\phi}-n,
\end{numcases}
\end{subequations}
where $(x, t)\in\mathbb{R}\times\Bbb R^+$, $n$ is the ion density, $v$ is the ion velocity, the self-consistent field $-\partial_{x}\phi$ satisfies the Poisson equation, {the ion pressure $P$ satisfies $P(n)=\ln n$.} Taking $\rho=n-1$ and
\begin{equation*}
\begin{split}
\begin{pmatrix} \rho \\ v \end{pmatrix}=\epsilon\Psi_{NLS}+\mathcal{O}(\epsilon^{2}),
\end{split}
\end{equation*}
with
\begin{equation}
\begin{split}\label{LS}
\epsilon\Psi_{NLS}=\epsilon A\big(\epsilon(x-c_{g}t),\epsilon^{2}t\big)e^{i(k_{0}x-\omega_{0}t)}\varphi(k_{0})+c.c.,
\end{split}
\end{equation}
the nonlinear Schr\"odinger equation (NLS) can be derived for the complex amplitude $A$,
\begin{equation}
\begin{split}\label{A}
\partial_{T}A=i\nu_{1}\partial_{X}^{2}A+i\nu_{2}A|A|^{2},
\end{split}
\end{equation}
where $T=\epsilon^{2}t\in\mathbb{R}$ is the slow time scale and $X=\epsilon(x-c_{g}t)\in\mathbb{R}$ is the slow spatial scale and coefficients $\nu_{j}=\nu_{j}(k_{0})\in \mathbb{R}$ with $j\in\{1,2\}$. In the above modulation approximation, $0<\epsilon\ll1$ is a small perturbation parameter, $\omega_{0}>0$ is the basic temporal wave number associated to the basic spatial wave number $k_{0}>0$ of the underlying temporally and spatially oscillating wave train $e^{i(k_{0}x-\omega_{0}t)}$, $c_{g}$ is the group velocity and `c.c.' denotes the complex conjugate. The NLS equation is derived in order to describe the slow modulations in time and in space of the wave train $e^{i(k_{0}x-\omega_{0}t)}$ and the time and space scales of the modulations are $\mathcal{O}(1/\epsilon^{2})$ and $\mathcal{O}(1/\epsilon)$, respectively.  For the Euler-Poisson equation \eqref{equation1}, the basic spatial wave number $k=k_{0}$ and the basic temporal wave number $\omega=\omega_{0}$ are related via the following linear dispersion relation
\begin{equation}
\begin{split}\label{equation3}
\omega(k)=k\sqrt{\frac{2+k^{2}}{1+k^{2}}}=k\widehat{q}(k), \ \widehat{q}(k)=\sqrt{\frac{2+k^{2}}{1+k^{2}}}.
\end{split}
\end{equation}
From the dispersion relation, the group velocity $c_{g}=\frac{\partial w}{\partial k}(k_{0})$ of the wave packet can be found, and  $\varphi(k_{0})=\big(1,-\widehat{q}(k_{0})\big)^{T}$ is an eigenvector for the linearized equation. Our ansatz leads to waves moving to the right. To obtain waves moving to the left, $-\omega_{0}$ and $c_{g}$ have to be replaced with $\omega_{0}$ and $-c_{g}$, respectively.  The NLS equation is a completely integrable Hamiltonian system, which can be solved explicitly with the help of inverse scattering method (see \cite{A}  for example). We also note that in the year of 1968, V.E. Zakharov \cite{Z} derived the nonlinear Schr\"{o}dinger equation from the equations of hydrodynamics for an ideal fluid with a free surface of a deep fluid.

In this paper, we will consider such a NLS approximation for the ion Euler-Poisson equation. But before we state the main result concerning such an approximation in this paper, we first make some literature review on some important aspects of the Euler-Poisson equation and the modulational approximation.

In recent years, many efforts have been made to study the global existence and approximation of solutions to the Euler-Poisson equation for ions as well as the Euler-Poisson equation for electrons. We did not mention the Euler-Poisson for electrons above, but it is worth mentioning some recent results on this equation. The Euler-Poisson equations for ions as well as electrons are both important PDE models arising in plasma physics, share some basic difficulties as other important fluid models,  and are far from being well understood.  Guo firstly obtained global irrotational solutions with small velocity for the 3D electron fluid, based on the Klein-Gordon effect \cite{Guo98}. For the two dimensional electron fluid in Euler-Poisson system, Ionescu and Pausader obtained the global stability of the constant equilibrium solution \cite{IP13}. Jang considered the global solution with spherical symmetry initial data \cite{Jang12}. Furthermore, Jang, Li and Zhang obtained the smooth global solutions \cite{JLZ14}. Finally, Li and Wu solved the Cauchy problem for the two dimensional electron Euler-Poisson system \cite{LW14}. Recently, Guo, Han and Zhang \cite{GHZ15} finally completely settled this problem of global existence and proved that no shocks form for the 1D Euler-Poisson system for electrons. For the Euler-Poisson equation for ions, Guo and Pausader \cite{GP11} constructed global smooth irrotational solutions with small amplitude for ion dynamics in $\Bbb R^3$. For the long wave approximation, Guo and Pu \cite{GP14} established rigorously the KdV limit for the ion Euler-Poisson system in 1D for both cold and hot plasma cases, where the electron density satisfies the classical Maxwell-Boltzmann law. This result was generalized to the higher dimensional cases, and the 2D Kadomtsev-Petviashvili-II (KP-II) equation and the 3D Zakharov-Kuznetsov (ZK) equation are derived under different scalings \cite{P}. Almost at the same time, \cite{LLS} also established the Zakharov-Kuznetsov equation in 3D from the Euler-Poisson system. Recently, the authors in the present paper \cite{LP,LP17} obtained rigorously the quantum KdV limit in 1D and the KP-I and KP-II equations in 2D for the Euler-Poisson system for cold as well as hot plasma taking quantum effect into account, where the electron equilibrium is given by a Fermi-Dirac distribution. Han-Kwan \cite{HK} also introduced a long wave scaling for the Vlasov-Poisson equation and derived the KdV equation in 1D, the KP-II equation in 2D and the Zakharov-Kuznetsov equation in 3D using the modulated energy method. These long wavelength limit results are interesting, since the oscillatory solution of the KdV equation is nothing but a solution of the NLS equation in the small wave number region with frequency induced from the dispersive relation. In this sense, derivation of the NLS equation in the current paper is a more interesting problem in the context of Euler-Poisson system \eqref{equation1}.

Showing error estimates for the NLS approximation of dispersive wave systems with quadratic terms is not a trivial task, while in the absence of quadratic terms a simple application of Gronwall's inequality yields the desired result \cite{KS}. If semilinear quadratic terms are present in the original system they can be removed by the so called normal form transformation, a near identity change of variables, if the eigenvalues of the linearized problem satisfy a non-resonance condition \cite{K}. Note that the normal form transformation was first introduced to eliminate semilinear quadratic terms in quadratic nonlinear Klein-Gordon systems to study global solutions in \cite{S85}. The normal form method has been widely used and extended in decay estimate and to extend the existence time for small solutions in some quasilinear systems in recent years. The interested readers may refer to \cite{W} for almost global existence for the 2D infinite depth full water wave equation , Germain and Masmoudi for the global existence of the 3D Euler-Maxwell system \cite{GM}, Alazard, Delort and Szeftel for the 2D gravity water waves and for the nonlinear Klein-Gordon equation \cite{AD,De,D09}, Hunter, Ifrim and Tataru for the water waves and the Burgers-Hilbert equation \cite{H,H16,IT}, Ionescu and Pusateri for the water wave system in 2D \cite{IP15,IP16,IP18}. Besides, the normal form transformation was also developed to study the long wavelength approximation and the modulational approximation of nonlinear systems.  For the long wavelength limit, one can refer to D\"ull \cite{D12} and Schneider and Wayner \cite{SW,SW02} for the water wave problem. For the modulational approximation, one can refer to D\"ull \cite{D1} for the quasilinear Klein-Gordon eqution and Schneider \cite{S1998, S98} for the hyperbolic systems and the KdV equation.

The quasilinearity causes two basic problems in common in justifying the NLS approximation. One is loss of derivatives, which finally makes it not easy to close energy estimates. The other one is  resonances (violating the non-resonance condition \eqref{equation1,1}), and due to the continuous spectrum of the linearized problem of many physics systems such as the ion-acoustic wave problem studied in this paper and the water wave problems, the eigenvalues are continuous functions and it is hard to avoid resonances. Thus some attempts were made to overcome these two main difficulties as well as many other difficulties induced by structures of a particular quasilinear system, to justify the NLS approximation. As far as the authors know, only a few special examples are known to work up to date. One is the quasilinear dispersive wave system in which the right-hand side only loses half a derivative, in which case the elimination of the quadratic terms is still possible with the help of a normal form transformation. In this case, the transformed system loses only one derivative in total and can be handled with the Cauchy-Kowalevskaya theorem \cite{D,S,SW}. It was realized that it is not easy to close the energy estimates once the order of total loss of derivatives is greater than one. Recently, justification of the NLS approximation for a quasilinear Klein-Gordon equation has been obtained in \cite{D1} by using the normal form transformation to define a new energy to simplify the error estimates, in which the nonlinearities lose one derivative causing that the transformed system to lose two derivatives in total. But luckily there is no resonance in such a quasilinear Klein-Gordon model \cite{D1}, i.e. the non-resonance condition is satisfied. Besides, the NLS equation has been justified for the 2D water wave problem in the special case of zero surface tension and infinite depth \cite{T} by finding a transformation adapted to this problem which allows to eliminate all quadratic terms. See also the two dimensional hyperbolic NLS approximation for the 3D water wave problem \cite{T15}. Another example is in the context of the Korteweg-de Vries equation where the result can be obtained by applying a Miura transformation \cite{S1} to eliminate the dangerous quadratic terms.

%\textcolor{blue}{As the authors have tried, all the above techniques cannot be directly used to justify the NLS approximation for the ion Euler-Poisson equation considered in this paper, due to the following points (1)-(4).}

Due to the quasilinearity of the system and the dispersive relation \eqref{equation3}, we face all the following principal difficulties in justifying the NLS approximation, namely,
\begin{enumerate}
\item the quasilinearity,
\item a quadratic nonlinearity,
\item trivial resonance at the wave number $k=0$ as well as nontrivial resonance at $k=k_{0}$,
\item loss of one derivative in the quadratic nonlinear term that finally causes the transformed system to lose two derivatives.
\end{enumerate}

It is worth highlighting that the water waves problem treated by Totz-Wu \cite{T} present similar difficulties, where the authors found some special transformation adapted to their problem whereas we used the normal form transformation and modified energy functional in this paper. Precisely, to handle the quadratic nonlinearity, we may need to introduce the normal form transformation, but then the quasilinearity causes the quadratic nonlinearity to lose one derivative and eventually causes the loss of two derivatives in total in the transformed system. Therefore it is not easy to close the energy estimates. On the other hand, both trivial and non-trivial resonances occur, i.e. the non-resonance condition is not satisfied.  {Hence, the validity of the NLS approximation to the ion Euler-Poisson equation is far from a trivial problem.} In this paper, we will consider such an interesting problem and justify such a modulation approximation to the Euler-Poisson equation for ions \eqref{equation1}.

The main result of this paper is the following
\begin{theorem}\label{Thm1}
Fix $s_{A}\geq6$. Then for all $k_{0}\neq0$ and for all $C_{1}, \ T_{0}>0$, there exist $C_{2}>0, \  \epsilon_{0}>0$ such that for all solutions $A\in C([0,T_{0}],H^{s_{A}}(\mathbb{R},\mathbb{C}))$ of the NLS equation \eqref{A} with
\begin{equation*}
\begin{split}
\sup_{T\in[0,T_{0}]}\big\|A(\cdot,T)\big\|_{H^{s_{A}}}(\mathbb{R},\mathbb{C})\leq C_{1},
\end{split}
\end{equation*}
the following holds. For all $\epsilon\in(0,\epsilon_{0})$, there are solutions
\begin{equation*}
\begin{split}
\begin{pmatrix} n-1 \\ v \end{pmatrix}\in \Big(C\big([0,T_{0}/\epsilon^{2}],H^{s_{A}}(\mathbb{R},\mathbb{R})\big)\Big)^{2},
\end{split}
\end{equation*}
of the ion Euler-Poisson equation \eqref{equation1} that satisfy
\begin{equation*}
\begin{split}
\sup_{t\in[0,T_{0}/\epsilon^{2}]}\Big\|\begin{pmatrix} n-1 \\ v \end{pmatrix}-\epsilon\Psi_{NLS}(\cdot,t)\Big\|_{H^{s_{A}}(\mathbb{R},\mathbb{R})^{2}}
\leq C_{2}\epsilon^{3/2},
\end{split}
\end{equation*}
\end{theorem}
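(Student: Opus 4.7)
The plan is to compare the true Euler-Poisson solution not to $\epsilon\Psi_{NLS}$ directly but to an improved approximation
\begin{equation*}
\epsilon\Psi = \epsilon\Psi_{NLS} + \epsilon^2\Psi_2 + \cdots + \epsilon^N\Psi_N,
\end{equation*}
whose higher-order profiles are chosen so that, when $\epsilon\Psi$ is substituted into \eqref{equation1}, the residual is $O(\epsilon^M)$ in $H^{s_A}$ for $M$ as large as needed. At order $\epsilon^3$ the solvability condition reproduces the NLS equation \eqref{A} for $A$; at higher orders one must solve linear inhomogeneous problems for $\Psi_j$, which are solvable except at the resonant wave numbers $k=0$ and $k=\pm k_0$ — a phenomenon that will later force us to treat those Fourier modes separately. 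Writing $(n-1,v)^T = \epsilon\Psi + \epsilon^{3/2}R$, the error $R$ satisfies a quasilinear evolution equation whose linear part is governed by \eqref{equation3}, whose nonlinearity contains quadratic terms coming both from the Euler-Poisson structure and from cross-terms with $\epsilon\Psi$, and whose forcing is the small residual.

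To prevent the quadratic terms from producing $O(1)$ growth of $R$ on the time scale $O(1/\epsilon)$, I would apply a normal form transformation $R = \tilde R + \epsilon\mathcal N(\tilde R,\tilde R)$ with $\mathcal N$ a bilinear Fourier multiplier built from symbols of the form $1/i(\omega(k)\mp\omega(k-\ell)\mp\omega(\ell))$. Because the non-resonance condition fails at $k=0$ (trivial resonance, where the denominator vanishes identically) and at $k=\pm k_0$ (non-trivial resonance coming from the wave-packet frequency), I would introduce smooth cutoffs $\chi_\delta$ supported away from the bad frequencies and apply the normal form only to the cutoff quadratic part. The remaining uncancelled resonant contributions are handled by exploiting that $\epsilon\Psi$ is Fourier-concentrated around $\pm k_0$, so that convolution with it produces an effective gain of $\epsilon$ and shifts the support of the product away from the resonance.

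After the normal form, the transformed system loses two derivatives in total — one from the quasilinearity and one from the small denominators in $\mathcal N$. To close estimates under this derivative budget, I would design a modified energy $\mathcal E_s(\tilde R)$ equivalent to $\|\tilde R\|_{H^{s_A}}^2$, obtained by quasilinear symmetrization of the top-order terms (using the eigenstructure of the linearization and the ion pressure $P(n)=\ln n$) together with carefully chosen lower-order correctors that cancel the remaining loss-of-regularity contributions introduced by the normal form. Differentiation of $\mathcal E_s$ along the transformed system should then yield a differential inequality of the form
\begin{equation*}
\partial_t \mathcal E_s \leq C\epsilon^2(\mathcal E_s + 1) + C\epsilon^{M-3},
\end{equation*}
which, together with Gronwall's lemma and $M$ chosen large enough, gives $\mathcal E_s(t) \leq C$ uniformly on $[0,T_0/\epsilon^2]$. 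Undoing the normal form then produces the desired $O(\epsilon^{3/2})$ bound in the original variables, and combining this a priori estimate with a standard local-existence theory for quasilinear symmetric-hyperbolic-type systems continues the solution up to time $T_0/\epsilon^2$.

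The main obstacle I expect is the interplay between the non-trivial resonance at $k=\pm k_0$ and the two-derivative loss. The resonant part of the quadratic nonlinearity cannot be removed by the normal form and couples $R$ to $\epsilon\Psi$ in a way that a priori threatens linear-in-time growth over $[0,T_0/\epsilon^2]$. Showing that the Fourier localization of $\epsilon\Psi$ together with the precise algebraic structure of the quadratic coefficients renders this coupling harmless at the level of the modified energy — so that the cutoff scale $\delta$, the normal-form denominators, and the symmetrizer in $\mathcal E_s$ can all be chosen compatibly — is the technical heart of the argument, and where one expects to exploit the specific form of the dispersion relation \eqref{equation3} rather than just its general shape.
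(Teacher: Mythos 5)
Your overall framework (extended ansatz with small residual, normal form against the dangerous quadratic terms, resonances at $k=0$ and $k=\pm k_{0}$, modified energy, Gronwall) matches the paper's, but three of your steps would fail as written. First, the term that threatens growth on the time scale $\mathcal O(\epsilon^{-1})$ is the term \emph{linear} in $R$ with coefficient $\epsilon\Psi_{c}$, namely $2\epsilon Q(\Psi_{c},R)$; the transformation must be of the form $R\mapsto R+\epsilon B(\Psi_{c},R)$, bilinear in $(\Psi_{c},R)$, not $R\mapsto \tilde R+\epsilon\mathcal N(\tilde R,\tilde R)$. Second, your plan to excise the resonances with cutoffs and argue that the Fourier localization of $\epsilon\Psi$ ``shifts the support of the product away from the resonance'' does not work for the nontrivial resonance: with $\widehat\phi_{c}$ supported near $k-m=\pm k_{0}$ and $m$ near $0$, the denominator $-j_{1}\omega(k)-\omega(k-m)+j_{2}\omega(m)$ vanishes exactly at $k=k_{0}$, $j_{1}=-1$, which lies \emph{inside} the support of the convolution; no support argument removes it. The paper's device is the weight $\vartheta$ of \eqref{va} (and $\vartheta_{0}=\vartheta-\epsilon$), whose linear vanishing at $k=0$ offsets the linear vanishing of the denominator (see \eqref{156} and the bound $|\widehat\vartheta_{0}(k-k_{0})|\leq C|k-k_{0}|$ in Lemma \ref{L10}), so that the normal-form kernels are genuinely bounded at both resonances. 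Third, your scaling $\epsilon^{3/2}R$ is too weak: the self-interaction $\epsilon^{\beta}Q(R,R)$ must be $\mathcal O(\epsilon^{2})$ to survive Gronwall over $[0,T_{0}/\epsilon^{2}]$, which forces $\beta>2$ (the paper takes $\beta=5/2$); the $\epsilon^{3/2}$ in the theorem comes from $\|S(\epsilon\Psi)-\epsilon\Psi_{NLS}\|_{H^{s_{A}}}\leq C\epsilon^{3/2}$ in \eqref{equation12-2}, not from the size of the remainder.

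The deepest gap is your treatment of the two-derivative loss. You propose to transform everything and then repair the loss with ``quasilinear symmetrization'' and ``lower-order correctors,'' but a genuine loss of two derivatives in the transformed equation is not a lower-order defect and cannot be cancelled by correctors of lower order; this is precisely the obstruction the paper identifies with the system \eqref{R01}. The paper's resolution is structural rather than corrective: only the low-frequency part $R^{0}$ is transformed (twice, via $B^{0,1,n}$ and then $D^{n,\tilde n,\pm}$, because the first transformation leaves behind terms that are still only $\mathcal O(\epsilon)$ at low frequencies), while the high-frequency part $R^{1}$ is left untransformed so that its equation \eqref{R1} loses only \emph{one} derivative. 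The normal forms $B^{1,0,n}$ and $B^{1,1,n}$ are instead built into the energy \eqref{equ14} as $\mathcal O(\epsilon)$ cross terms; the $\mathcal O(\epsilon)$ contributions then cancel algebraically in $\partial_{t}E_{\ell}$ by construction (part (b) of Lemmas \ref{L10} and \ref{L8}), and the surviving one-derivative loss is absorbed by the transport structure via integration by parts (Lemma \ref{L9}) together with the further correction $\widetilde{\mathcal E}_{s}=\mathcal E_{s}+\epsilon^{2}h$. Without some version of this asymmetric treatment of low and high frequencies, your energy estimate does not close.
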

\noindent where $\varphi(k_{0})=\big(1,-\widehat{q}(k_{0})\big)^{T}$ is given in the definition of $\varepsilon\Psi_{NLS}$, $\widehat{q}$ comes from the equation \eqref{equation3}.

\begin{remark}
First, compared with the solution $(n-1,v)$ and the approximation $\epsilon\Psi_{NLS}$, which are both of order $\mathcal{O}(\epsilon)$ in $L^{\infty}$, the error of order $\mathcal{O}(\epsilon^{3/2})$ is small enough such that the dynamics of the NLS equation can be found in the ion Euler-Poisson system \eqref{equation1}. Secondly, we note that the Fourier transform of $\epsilon\Psi_{NLS}$ is sufficiently strongly concentrated around the wave numbers $\pm k_{0}$, hence by using a modified approximation that has compact support in Fourier space but differs only slightly from $\epsilon\Psi_{NLS}$, the smoothness of the error bound can be made equal to the assumed smoothness of the amplitude. Finally, in the following proof of Theorem \ref{Thm1}, we always assume that $s_A$ is an integer to simplify the proof, although it can be generalized to all real numbers $s_A\geq6$.
\end{remark}

\begin{remark}
In the momentum equation for ion-Euler-Poisson equation \eqref{equation1}, we choose the ion pressure $P=\ln n$. Indeed, the result in this paper can be generalized to the general $\gamma$-law of the ion pressure $P$, i.e., when $P(n)=n^{\gamma}$ for $\gamma\geq1$.
\end{remark}
We would like to end the introduction by giving the structure of this paper here. {In Section 2 we derive the NLS equation formally, and then we estimate the terms that remain after inserting the approximation into \eqref{equation6}. In Section 3 we outline the basic ideas for the diagonalized system \eqref{equation7} of the Euler-Poisson equation for ions when justifying the NLS approximation.} In Section 4 we perform the normal form transformation that is invertible and does not lose any derivatives for $|k|\leq \delta$, and then we present the transformed error equations for $(\mathcal{R}^{0},R^{1})$. In Section 5 we construct our energy and perform the error estimates to prove Theorem \ref{Thm1}.

\textbf{Notation}. We denote the Fourier transform of a function $u\in L^{2}(\mathbb{R},\mathbb{K})$, with $\mathbb{K}=\mathbb{R}$ or $\mathbb{K}=\mathbb{C}$ by
\begin{equation*}
\begin{split}
\widehat{u}(k)=\frac{1}{2\pi}\int_{\mathbb{R}}u(x)e^{-ikx}dx.
\end{split}
\end{equation*}
Let $H^{s}(\mathbb{R},\mathbb{K})$ be the space of functions mapping from $\mathbb{R}$ into $\mathbb{K}$ for which the norm
\begin{equation*}
\begin{split}
\|u\|_{H^{s}(\mathbb{R},\mathbb{K})}=\Big(\int_{\mathbb{R}}|\widehat{u}(k)|^{2}(1+|k|^{2})^{s}dk\Big)^{1/2}
\end{split}
\end{equation*}
is finite. Usually we write $L^{2}$ and $H^{s}$ instead of $L^{2}(\mathbb{R},\mathbb{R})$ and $H^{s}(\mathbb{R},\mathbb{R})$. We use the space $L^{p}(m)(\mathbb{R},\mathbb{K})$ defined by $u\in L^{p}(m)(\mathbb{R},\mathbb{K})$ such that $\sigma^{m}u\in L^{p}(\mathbb{R},\mathbb{K})$, where $\sigma(x)=(1+x^{2})^{1/2}$. Finally, we write $A\lesssim B$, if $A\leq CB$ for a constant $C>0$, and $A=\mathcal{O}(B)$, if $|A|\lesssim B$.

\section{\textbf{Derivation of the NLS approximation and estimates for the residual}}
NLS type equation has been given formally for the one dimensional motion of plasma composed of cold ions and isothermal electrons in \cite{SI}. In the following we will obatin NLS equation for the Euler-Poisson equation \eqref{equation1}.
In order to isolate \eqref{equation1} into linear, quadratic and higher order terms, we can rewrite the normalized Euler-Poisson system \eqref{equation1} as follows,
\begin{subequations}\label{equation4}
\begin{numcases}{}
\partial_{t}\rho+\partial_{x}v+\partial_{x}(\rho v)=0,\\
\partial_{t}v+\partial_{x}\rho+\partial_{x}\phi+v\partial_{x} v-\partial_{x}\frac{\rho^{2}}{2}=-\partial_{x}\big[\ln(1+\rho)-\rho+\frac{\rho^{2}}{2}\big],\\
\rho=(1-\partial_{x}^{2})\phi+\frac{\phi^{2}}{2}+\big[e^{\phi}-1-\phi-\frac{\phi^{2}}{2}\big].
\end{numcases}
\end{subequations}
For small $\rho$, the last line defines an inverse operator $\rho\mapsto\phi(\rho)$. We further expand this inverse operator up to third order as
\begin{equation}
\begin{split}\label{equation5}
\phi(\rho)=(1-\partial_{x}^{2})^{-1}\rho-\frac{1}{2}(1-\partial_{x}^{2})^{-1}\big[(1-\partial_{x}^{2})^{-1}\rho\big]^{2}+\mathcal{M}(\rho),
\end{split}
\end{equation}
where $\mathcal{M}$ satisfies some good properties. We note that $v\partial_{x}v=\partial_{x}({v^{2}}/{2})$ and we can rewrite the above system \eqref{equation4} as
\begin{equation}
\begin{split}\label{equation6}
\partial_{t}&\begin{pmatrix}\rho\\ v\end{pmatrix}+\begin{pmatrix}0&\partial_{x}\\ \partial_{x}(1-\partial_{x}^{2})^{-1}+\partial_{x}&0\end{pmatrix}\begin{pmatrix}\rho\\ v\end{pmatrix}\\
=&\begin{pmatrix}-\partial_{x}(\rho v)\\-\partial_{x}\frac{v^{2}}{2}+\partial_{x}\frac{\rho^{2}}{2}
+\frac{1}{2}\partial_{x}(1-\partial_{x}^{2})^{-1}\big[(1-\partial_{x}^{2})^{-1}\rho\big]^{2}-\partial_{x}\mathcal{M}(\rho)
-\partial_{x}\big(\ln(1+\rho)-\rho+\frac{\rho^{2}}{2}\big)\end{pmatrix}.
\end{split}
\end{equation}
Let $S=\begin{pmatrix}1& 1\\-q(|\partial_{x}|)& q(|\partial_{x}|)\end{pmatrix}$ and $\begin{pmatrix}\rho\\ v\end{pmatrix}=S\begin{pmatrix}U_{1}\\ U_{-1}\end{pmatrix}$.
{We define the operator $\Omega$ by $\widehat{\Omega u}(k)=i\omega(k)\widehat{u}(k)$.} We can diagonalize the linear part of the equation \eqref{equation6} as
\begin{equation}
\begin{split}\label{equation7}
\partial_{t}U_{j}=j\Omega U_{j}+Q_{j}(U,U)+N_{j},
\end{split}
\end{equation}
where $j\in\{1,-1\}$ and the quadratic term $Q_{j}$ and the high order term $N_{j}$ take the form
\begin{equation}
\begin{split}\label{equation8}
Q_{j}&=-\frac{\partial_{x}(\rho v)}{2}-j\frac{\partial_{x}}{4q(|\partial_{x}|)}\big[-|v|^{2}+\rho^{2}+(1-\partial_{x}^{2})^{-1}[(1-\partial_{x}^{2})^{-1}\rho]^{2}\big],\\
N_{j}&=j\frac{\partial_{x}}{2q(|\partial_{x}|)}\big[\ln(1+\rho)-\rho+\frac{\rho^{2}}{2}+\mathcal{M}(\rho)\big].
\end{split}
\end{equation}
Plugging $\rho=U_{1}+U_{-1}$ and $v=-q(|\partial_{x}|)(U_{1}-U_{-1})$ into $Q_{j}$ and $N_{j}$, we now compute the Fourier transform of $U_{j}$ as
\begin{equation}
\begin{split}\label{equation9}
\partial_{t}\widehat{U}_{j}
=&ij\omega(k)\widehat{U}_{j}
+\frac{ik}{2}\int_{\mathbb{R}}\widehat{q}(m)\big(\widehat{U}_{1}(k-m)
+\widehat{U}_{-1}(k-m)\big)\big(\widehat{U}_{1}(m)-\widehat{U}_{-1}(m)\big)dm\\
&+\frac{ijk}{4\widehat{q}(k)}\int\widehat{q}(k-m)\widehat{q}(m)\big(\widehat{U}_{1}(k-m)
-\widehat{U}_{-1}(k-m)\big)\big(\widehat{U}_{1}(m)-\widehat{U}_{-1}(m)\big)dm\\
&-\frac{ijk}{4\widehat{q}(k)}\int\big(1+\frac{1}{\langle k\rangle^{2}}\frac{1}{\langle k-m\rangle^{2}}\frac{1}{\langle m\rangle^{2}}\big)\big(\widehat{U}_{1}(k-m)+\widehat{U}_{-1}(k-m)\big)\big(\widehat{U}_{1}(m)+\widehat{U}_{-1}(m)\big)dm\\
&+\frac{ijk}{2\widehat{q}(k)}\sum_{n\geq3}(-1)^{n+1}\frac{1}{n}(\widehat{U}_{1}+\widehat{U}_{-1})^{\ast n}
+\frac{ijk}{2\widehat{q}(k)}\mathcal{M}(\widehat{U}_{1}+\widehat{U}_{-1})(k),
\end{split}
\end{equation}
with $j\in\{1,-1\}$ and $\langle k\rangle=\sqrt{(1+k^{2})}$. In order to derive the NLS equation as an approximation equation for system \eqref{equation9}, we make the ansatz
\begin{equation}
\begin{split}\label{equation10}
\begin{pmatrix} U_{1} \\ U_{-1} \end{pmatrix}=\epsilon\widetilde{\Psi}
=\epsilon\widetilde{\Psi}_{1}+\epsilon\widetilde{\Psi}_{-1}+\epsilon^{2}\widetilde{\Psi}_{0}
+\epsilon^{2}\widetilde{\Psi}_{2}+\epsilon^{2}\widetilde{\Psi}_{-2},
\end{split}
\end{equation}
with
\begin{equation*}
\begin{split}
&\epsilon\widetilde{\Psi}_{\pm1}=\epsilon\widetilde{A}_{\pm1}\big(\epsilon(x-c_{g}t),\epsilon^{2}t\big)E^{\pm1}\begin{pmatrix} 1 \\ 0 \end{pmatrix},\\
&\epsilon^{2}\widetilde{\Psi}_{0}=\begin{pmatrix}\epsilon^{2}\widetilde{A}_{01}\big(\epsilon(x-c_{g}t),\epsilon^{2}t\big) \\ \epsilon^{2}\widetilde{A}_{02}\big(\epsilon(x-c_{g}t),\epsilon^{2}t\big)\end{pmatrix},\\
&\epsilon^{2}\widetilde{\Psi}_{\pm2}=\begin{pmatrix}\epsilon^{2}\widetilde{A}_{(\pm2)1}\big(\epsilon(x-c_{g}t),\epsilon^{2}t\big)E^{\pm2} \\ \epsilon^{2}\widetilde{A}_{(\pm2)2}\big(\epsilon(x-c_{g}t),\epsilon^{2}t\big)E^{\pm2}\end{pmatrix},
\end{split}
\end{equation*}
where $0<\epsilon\ll1$, $E^{\pm j}=e^{\pm ij(k_{0}x-\omega_{0}t)}$, $\omega_{0}=\omega(k_{0})$, $\widetilde{A}_{-j}=\overline{\widetilde{A}}_{j}$ and $\widetilde{A}_{-j\ell}=\overline{\widetilde{A}}_{j\ell}$. The ansatz leads to waves that move to the right. If one replaces in the above ansatz the vector $(1,0)^{T}$ by $ (0,1)^{T}$, $-\omega_{0}$ by $\omega_{0}$ and $c_{g}$ by $-c_{g}$, it leads to waves that move to the left.

We insert our ansatz \eqref{equation10} into the system \eqref{equation9} and then replace the dispersion relation $\omega=\omega(k)$ in all terms of the form $\omega\widetilde{A}_{j}E^{j}$ or $\omega\widetilde{A}_{j\ell}E^{j}$ by their Taylor expansions around $k=jk_{0}$. By equating the coefficients of the $\epsilon^{m}E^{j}$ to zero, we find that the coefficients of $\epsilon , \ \epsilon^{2} E^{0}$ and $\epsilon^{2} E^{1}$ vanish identically due to the definition of $\omega$ and $c_{g}$.

For $\epsilon^{2} E^{2}$ we obtain
\begin{equation*}
\begin{split}
\big(-2\omega_{0}+\omega(2k_{0})\big)\widetilde{A}_{21}=\gamma_{21}\big(\widetilde{A}_{1}\big)^{2},\\
\big(-2\omega_{0}-\omega(2k_{0})\big)\widetilde{A}_{22}=\gamma_{22}\big(\widetilde{A}_{1}\big)^{2},
\end{split}
\end{equation*}
where the coefficients $\gamma_{2\ell}\in\mathbb{R}$ for $\ell=1,2$. From the explicit form of $\omega(k)$, we see $-2\omega_{0}-\omega(2k_{0})\neq0$, and hence $\widetilde{A}_{2\ell}$ are well-defined in terms of $(\widetilde{A}_{1})^{2}$.

For $\epsilon^{3}E^{0}$ we obtain
\begin{equation*}
\begin{split}
\big(c_{g}-(\partial_{k}\omega)(0)\big)\partial_{X}\widetilde{A}_{01}=\gamma_{31}\partial_{X}(\widetilde{A}_{1}\widetilde{A}_{-1}),\\
\big(c_{g}+(\partial_{k}\omega)(0)\big)\partial_{X}\widetilde{A}_{02}=\gamma_{32}\partial_{X}(\widetilde{A}_{1}\widetilde{A}_{-1}),
\end{split}
\end{equation*}
where the coefficients $\gamma_{3\ell}\in\mathbb{R}$. Recall $c_{g}=(\partial_{k}\omega)(k_{0})$, then $c_{g}\pm(\partial_{k}\omega)(0)\neq0$, and the $\widetilde{A}_{0\ell}$ are well-defined in terms of $\widetilde{A}_{1}\widetilde{A}_{-1}$.

For $\epsilon^{3}E^{1}$ we obtain
\begin{equation*}
\begin{split}
\partial_{T}\widetilde{A}_{1}=\frac{i}{2}\partial_{k}^{2}\omega(k_{0})\partial_{X}^{2}\widetilde{A}_{1}+g_{1},
\end{split}
\end{equation*}
where $g_{1}$ is a sum of multiples of $\widetilde{A}_{1}\widetilde{A}_{0\ell}$ and $\widetilde{A}_{-1}\widetilde{A}_{2\ell}$. From the above steps we obtain algebraic relations such that $\widetilde{A}_{0\ell}$ and $\widetilde{A}_{2\ell}$ can be expressed in terms of $\widetilde{A}_{1}\widetilde{A}_{-1}$ and $(\widetilde{A}_{1})^{2}$, respectively. Eliminating $\widetilde{A}_{0\ell}$ and $\widetilde{A}_{2\ell}$, then gives the NLS equation
\begin{equation}
\begin{split}\label{NLS}
\partial_{T}\widetilde{A}_{1}=i\frac{\partial_{k}^{2}\omega(k_{0})}{2}\partial_{X}^{2}\widetilde{A}_{1}
+i\nu_{2}(k_{0})\widetilde{A}_{1}\big|\widetilde{A}_{1}\big|^{2},
\end{split}
\end{equation}
with some $\nu_{2}(k_{0})\in\mathbb{R}$.
%\textcolor{blue}{Note that $\epsilon S(\widetilde{\Psi}_{\pm1})$ of \eqref{equation10} is the so-called $\epsilon\Psi_{NLS}$ in equation \eqref{LS}, where $S$ is an inverse matrix.}

Considering the residual
\begin{equation}
\begin{split}\label{equat11}
Res_{U}(\epsilon\widetilde{\Psi})=\begin{pmatrix} Res_{U_{1}}(\epsilon\widetilde{\Psi}) \\ Res_{U_{-1}}(\epsilon\widetilde{\Psi}) \end{pmatrix},
\end{split}
\end{equation}
which contains all terms that do not cancel after inserting ansatz \eqref{equation10} into system \eqref{equation9}. To prove the approximation property of the NLS equation \eqref{NLS}, the resdual term $Res_{U}(\epsilon\widetilde{\Psi})$ needs to be small enough such that the error term $R$ is of order $\mathcal{O}(1)$ for the time scaling $\mathcal{O}(1/\epsilon^{2})$. We take the following standard way to modify $\epsilon\widetilde{\Psi}$ as $\epsilon\Psi$ such that the resdual term $Res_{U}(\epsilon\Psi)$ small enough. Firstly, extend the above approximation $\epsilon\widetilde{\Psi}$ to its higher order terms. Secondly, restrict the modified approximation in Fourier space to small neighborhoods of  a finite number of integer multiples of the basic wave number $k_0>0$, by some cutoff function. By such a modification, the approximation will not change too much, but will lead to a simpler control of the error and make the approximation an analytic function.

Since $\pm\omega(mk_{0})\neq\pm m\omega(k_{0})$ for all integers $m\geq2$, we can proceed analogously as in \cite{D} to replace $\epsilon\widetilde{\Psi}$
 by a new approximation $\epsilon\Psi$ of the form
 \begin{equation}
\begin{split}\label{equation122}
\epsilon\Psi=\epsilon\Psi_{1}+\epsilon\Psi_{-1}+\epsilon^{2}\Psi_{p},
\end{split}
\end{equation}
where
\begin{align*}
\epsilon\Psi_{\pm1}=&\epsilon\psi_{\pm1}\begin{pmatrix} 1 \\ 0 \end{pmatrix} =\epsilon A_{\pm1}\big(\epsilon(x-c_{g}t),\epsilon^{2}t\big)E^{\pm1}\begin{pmatrix} 1 \\ 0 \end{pmatrix},\\
\epsilon^{2}\Psi_{p}=&\begin{pmatrix} \epsilon^{2}\psi_{p_{1}} \\ \epsilon^{2}\psi_{p_{-1}} \end{pmatrix}
=\epsilon^{2}\Psi_{0}+\epsilon^{2}\Psi_{2}+\epsilon^{2}\Psi_{-2}+\epsilon^{2}\Psi_{h},\\
\epsilon^{2}\Psi_{0}=&\begin{pmatrix} \epsilon^{2}\psi_{01} \\ \epsilon^{2}\psi_{02} \end{pmatrix}
=\begin{pmatrix} \epsilon^{2}A_{01}(\epsilon(x-c_{g}t),\epsilon^{2}t) \\ \epsilon^{2}A_{02}\big(\epsilon(x-c_{g}t),\epsilon^{2}t\big) \end{pmatrix} ,\\
\epsilon^{2}\Psi_{\pm2}=&\begin{pmatrix} \epsilon^{2}\psi_{(\pm2)1} \\ \epsilon^{2}\psi_{(\pm2)2} \end{pmatrix}
=\begin{pmatrix} \epsilon^{2}A_{(\pm2)1}\big(\epsilon(x-c_{g}t),\epsilon^{2}t\big)E^{\pm2}
    \\ \epsilon^{2}A_{(\pm2)2}\big(\epsilon(x-c_{g}t),\epsilon^{2}t\big) E^{\pm2} \end{pmatrix} ,\\
\epsilon^{2}\Psi_{h}=&\sum_{j=\pm1, n=1,2,3}\begin{pmatrix} \epsilon^{1+n}A_{j1}^{n}\big(\epsilon(x-c_{g}t),\epsilon^{2}t\big)E^{j}
    \\ \epsilon^{1+n}A_{j2}^{n}\big(\epsilon(x-c_{g}t),\epsilon^{2}t\big) E^{j} \end{pmatrix} \\
    &+\sum_{j=\pm2, n=1,2}\begin{pmatrix} \epsilon^{2+n}A_{j1}^{n}\big(\epsilon(x-c_{g}t),\epsilon^{2}t\big)E^{j}
    \\ \epsilon^{2+n}A_{j2}^{n}\big(\epsilon(x-c_{g}t),\epsilon^{2}t\big) E^{j} \end{pmatrix} \\
    &+\sum_{n=1,2}\begin{pmatrix} \epsilon^{2+n}A_{01}^{n}\big(\epsilon(x-c_{g}t),\epsilon^{2}t\big)
    \\ \epsilon^{2+n}A_{02}^{n}\big(\epsilon(x-c_{g}t),\epsilon^{2}t\big)  \end{pmatrix} \\
    &+\sum_{j=\pm3, n=0,1}\begin{pmatrix} \epsilon^{3+n}A_{j1}^{n}\big(\epsilon(x-c_{g}t),\epsilon^{2}t\big)E^{j}
    \\ \epsilon^{3+n}A_{j2}^{n}\big(\epsilon(x-c_{g}t),\epsilon^{2}t\big) E^{j} \end{pmatrix} \\
    &+\sum_{j=\pm4}\begin{pmatrix} \epsilon^{4}A_{j1}\big(\epsilon(x-c_{g}t),\epsilon^{2}t\big)E^{j}
    \\ \epsilon^{4}A_{j2}\big(\epsilon(x-c_{g}t),\epsilon^{2}t\big) E^{j} \end{pmatrix} ,
\end{align*}
where $A_{-j}=\overline{A}_{j}$ and $A_{-j\ell}=\overline{A}_{j\ell}$ have compact support in Fourier space for all $0<\epsilon\ll1$. Then, exactly as in Section 2 of \cite{D}, the following estimates hold for the modified residual.

\begin{lemma}\label{L4}
Let $s_{A}\geq6$ and $\widetilde{A}_{1}\in C\big([0,T_{0}],H^{s_{A}}(\mathbb{R},\mathbb{C})\big)$ be a solution of the NLS equation \eqref{NLS} with
\begin{equation*}
\begin{split}
\sup_{T\in[0,T_{0}]}\big\|\widetilde{A}_{1}\big\|_{H^{s_{A}}}\leq C_{A},
\end{split}
\end{equation*}
then for all $s\geq0$, there exist $C_{Res},C_{\Psi},\varepsilon_{0}>0$ depending on $C_{A}$ such that the following holds for all $\varepsilon\in(0,\varepsilon_{0})$. The approximation $\epsilon\Psi$ defined in \eqref{equation122} exists for all $t\in[0,T_{0}/\epsilon^{2}]$ and satisfies
\begin{subequations}\label{equation12}
\begin{numcases}{}
\sup_{t\in[0,T_{0}/\epsilon^{2}]}\big\|Res_{U}(\epsilon\Psi)\big\|_{H^{s}}\leq C_{Res}\epsilon^{9/2},\label{equation12-1}\\
\sup_{t\in[0,T_{0}/\epsilon^{2}]}\big\|S(\epsilon\Psi)-\epsilon\Psi_{NLS}\big\|_{H^{s_{A}}}\leq C_{\Psi}\varepsilon^{3/2},\label{equation12-2}\\
\sup_{t\in[0,T_{0}/\epsilon^{2}]}\Big(\big\|\widehat{\Psi}_{\pm1}\big\|_{L^{1}(s+1)(\mathbb{R},\mathbb{C})}
+\big\|\widehat{\Psi}_{p}\big\|_{L^{1}(s+1)(\mathbb{R},\mathbb{C})}\Big)\leq C_{\Psi}.\label{equation12-3}
\end{numcases}
\end{subequations}
\end{lemma}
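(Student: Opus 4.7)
The plan is to implement the standard iterative construction for modulation approximations, following the template of Section 2 in \cite{D}. First, I would extend the naive ansatz $\epsilon\widetilde{\Psi}$ by adding the correction terms $\epsilon^{1+n}A^n_{j\ell}(\epsilon(x-c_{g}t),\epsilon^{2}t)E^{j}$ for $j\in\{0,\pm 1,\pm 2,\pm 3,\pm 4\}$ and appropriate $n$, chosen inductively so that every coefficient of $\epsilon^{m}E^{j}$ with $m\leq 4$ in $Res_{U}$ vanishes. For $j\in\{0,\pm 2,\pm 3,\pm 4\}$, determining the corrector requires inverting the algebraic symbol $-ij\omega_{0}+i\omega(jk_{0}+\epsilon K)$, which at $K=0$ amounts to the non-resonance conditions $\omega(mk_{0})\neq m\omega_{0}$ for $m=2,3,4$; these follow directly from the explicit form \eqref{equation3}, since $\omega(k)/k=\widehat{q}(k)$ is strictly monotone for $k>0$. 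For $j=\pm 1$ the relevant operator is a transport-Schr\"odinger-type operator whose linearization around the NLS is solvable by the variation-of-constants formula, producing a new inhomogeneous source at the next order.

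Second, I would introduce a smooth Fourier cutoff $\chi(\epsilon D)$, with $\chi$ supported in $\{|k|\leq\delta_{0}\}$ and $\delta_{0}$ so small that the translates $\{|k-jk_{0}|\leq\delta_{0}\}$ for $|j|\leq 4$ are pairwise disjoint, and apply it to every amplitude $A^n_{j\ell}$. This guarantees that $\widehat{\Psi}_{\pm 1}$ and $\widehat{\Psi}_{p}$ are compactly supported in a finite union of $\delta_{0}$-neighborhoods of the $jk_{0}$, which yields \eqref{equation12-3} by Cauchy-Schwarz combined with the $H^{s_{A}}$-bound on each $A^n_{j\ell}$; the latter is obtained inductively from the hypothesis on $\widetilde{A}_{1}$ using the algebraic solvability in the previous step, the choice $s_{A}\geq 6$ providing the derivative room needed to absorb the losses introduced by the factors $\partial_{x}$ and $\partial_{x}/\widehat{q}(|\partial_{x}|)$ in $Q_{j}$ and $N_{j}$. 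The truncation error from the cutoff decays faster than any polynomial in $\epsilon$ and is absorbed into $C_{Res}\epsilon^{9/2}$.

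Third, the residual terms that survive after the cancellations are polynomials in the amplitudes of the generic form $\epsilon^{m}B(\epsilon(x-c_{g}t),\epsilon^{2}t)E^{j}$ with $m\geq 5$, plus the cutoff tail. The rescaling identity $\|B(\epsilon\cdot)\|_{L^{2}(\mathbb{R})}=\epsilon^{-1/2}\|B\|_{L^{2}(\mathbb{R})}$ converts each such contribution into a bound of order $\epsilon^{m-1/2}\geq\epsilon^{9/2}$, establishing \eqref{equation12-1}. The same scaling applied to $S(\epsilon\Psi)-\epsilon\Psi_{NLS}$, which up to the cutoff tail is $\epsilon^{2}\Psi_{p}$ (amplitude of size $\epsilon^{2}$ times a slow modulation), yields \eqref{equation12-2} with exponent $3/2$.

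The principal obstacle is purely combinatorial bookkeeping: one has to enumerate all coefficients of $\epsilon^{m}E^{j}$ up to $m=4$ generated by the quadratic term $Q_{j}$, the cubic expansion of $\ln(1+\rho)$, and the nonlocal operator $\mathcal{M}$ inherited from the inversion \eqref{equation5}, and verify that at each step the relevant symbol is bounded below. Since for the Euler-Poisson dispersion $\widehat{q}$ is strictly decreasing and $k\mapsto\omega(k)$ is strictly subadditive on $(0,\infty)$, the conditions $\omega(mk_{0})\neq m\omega_{0}$ hold strictly for every $m\geq 2$ and every $k_{0}\neq 0$, so no resonance obstructs the construction at the level of the approximation itself. (The resonance issues flagged in the introduction concern the error equation, not the formal ansatz and will be handled later.) With these remarks the lemma reduces to the explicit calculation carried out in \cite{D}.
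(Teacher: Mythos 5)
Your proposal follows essentially the same route as the paper, which itself defers to the construction of Lemma 2.6 in \cite{D}: extend the ansatz to higher order using the non-resonances $\omega(mk_{0})\neq m\omega_{0}$ (valid because $\widehat{q}$ is strictly decreasing, hence $\omega$ strictly subadditive), restrict the amplitudes to compact Fourier support near the wave numbers $jk_{0}$, and convert the surviving $\epsilon^{m}B(\epsilon(x-c_{g}t),\epsilon^{2}t)E^{j}$ terms with $m\geq5$ into $\mathcal{O}(\epsilon^{9/2})$ bounds via the scaling $\|B(\epsilon\cdot)\|_{L^{2}}=\epsilon^{-1/2}\|B\|_{L^{2}}$. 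One small correction: since the amplitudes lie only in $H^{s_{A}}$ with $s_{A}\geq6$ finite, the Fourier-cutoff tail is not super-polynomially small as you assert but only of order $\epsilon^{m+M-1/2}$ with $m+M\leq s_{A}$ (the estimate $\|(\chi_{[-\delta,\delta]}-1)\epsilon^{-1}\widehat{f}(\epsilon^{-1}\cdot)\|_{L^{2}(m)}\leq C\epsilon^{m+M-1/2}\|f\|_{H^{m+M}}$), which is exactly where the hypothesis $s_{A}\geq6$ is consumed; this does not affect the conclusion.
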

The proof of Lemma \ref{L4} is analogous to that of Lemma 2.6 in \cite{D} (see also \cite{D1}). In fact, the first and the third estimates are valid for appropriate constants $C_{Res}$ and $C_{\Psi}$ for all $s>0$, this is a consequence of the fact that our approximation $\epsilon\Psi$ has compact support in Fourier space. Besides, the approximation $\epsilon\Psi$ differs so slightly from the actual NLS approximation $\epsilon(\widetilde{\Psi}_{1}+\widetilde{\Psi}_{-1})$ and higher order asymptotic expansions of the exact solution, which are needed to make the residual $Res_{U}(\epsilon\Psi)$ sufficiently small, that the bounds in \eqref{equation12-1} and \eqref{equation12-2} hold if $s_{A}\geq6$. This is shown by using the estimate
\begin{equation*}
\begin{split}
\big\|(\chi_{[-\delta,\delta]-1})\epsilon^{-1}\widehat{f}(\epsilon^{-1}\cdot)\big\|_{L^{2}(m)}\leq C\epsilon^{m+M-1/2}\|f\|_{H^{m+M}},
\end{split}
\end{equation*}
for all $m,M\geq0$, where $\chi_{[-\delta,\delta]}$ is the characteristic function on $[-\delta,\delta]$.
The bound \eqref{equation12-3} will be used to estimate
\begin{equation*}
\begin{split}
\|\psi_{j}f\|_{H^{s}}\leq C\|\psi_{j}\|_{C_{b}^{s}}\|f\|_{H^{s}}\leq C\big\|\widehat{\psi}_{j}\big\|_{L^{1}(s)(\mathbb{R},\mathbb{C})}\|f\|_{H^{s}},
\end{split}
\end{equation*}
without loss of powers in $\epsilon$ as it would be the case with $\|\psi_{j}\|_{L^{2}(s)(\mathbb{R},\mathbb{C})}$. Moreover, by an analogous argument as in the proof of Lemma 3.3 in \cite{D}, we have
\begin{lemma}\label{L5}
For all $s\geq0$ there exists a constant $C_{\psi}>0$ such that
\begin{equation}
\begin{split}\label{equation13}
\big\|\partial_{t}\widehat{\psi}_{\pm1}+i\omega\widehat{\psi}_{\pm1}\big\|_{L^{1}(s)}\leq C_{\psi}\epsilon^{2}.
\end{split}
\end{equation}
\end{lemma}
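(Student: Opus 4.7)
The plan is to compute the residual $\partial_t \widehat{\psi}_{\pm 1} + i\omega \widehat{\psi}_{\pm 1}$ by a direct calculation in Fourier space and then to estimate it in $L^1(s)$. First, applying the scaling rule $\widehat{A_{\pm 1}(\epsilon(\cdot - c_g t),\epsilon^2 t)}(k)=\epsilon^{-1}\widehat{A}_{\pm 1}(k/\epsilon,\epsilon^2 t)e^{-ic_g t k}$ together with the frequency shift induced by $E^{\pm 1}=e^{\pm i(k_0 x-\omega_0 t)}$, one obtains
\begin{equation*}
\widehat{\psi}_{\pm 1}(k,t)=\epsilon^{-1}\widehat{A}_{\pm 1}\bigl((k\mp k_0)/\epsilon,\,\epsilon^2 t\bigr)\,e^{-ic_g(k\mp k_0)t}\,e^{\mp i\omega_0 t}.
\end{equation*}

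Next, I would differentiate in $t$ and add $i\omega(k)\widehat{\psi}_{\pm 1}$. The transport terms $-ic_g(k\mp k_0)\mp i\omega_0$ from the exponentials combine with $i\omega(k)$ to yield an overall factor $i[\omega(k)\mp\omega_0-c_g(k\mp k_0)]$, while the slow-time derivative contributes a separate $\epsilon^2\,\partial_T\widehat{A}_{\pm 1}$. Thus the residual equals
\begin{equation*}
\epsilon^{-1}\bigl[\epsilon^2\,\partial_T\widehat{A}_{\pm 1}+i\bigl(\omega(k)\mp\omega_0-c_g(k\mp k_0)\bigr)\widehat{A}_{\pm 1}\bigr]\cdot e^{-ic_g(k\mp k_0)t}e^{\mp i\omega_0 t}.
\end{equation*}
The Taylor expansion of $\omega$ around $\pm k_0$, together with $c_g=\omega'(k_0)$ and the oddness $\omega(-k)=-\omega(k)$ (so that $\omega'(-k_0)=\omega'(k_0)=c_g$ and $\omega(-k_0)=-\omega_0$), makes the linear part cancel and gives $\omega(k)\mp\omega_0-c_g(k\mp k_0)=\tfrac12\omega''(\pm k_0)(k\mp k_0)^2+O((k\mp k_0)^3)$. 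Meanwhile $\partial_T\widehat{A}_{\pm 1}$ is controlled via the NLS equation \eqref{NLS}, which yields $\|\partial_T A_{\pm 1}\|_{H^{s_A-2}}\lesssim\|A_{\pm 1}\|_{H^{s_A}}+\|A_{\pm 1}\|_{H^{s_A}}^{3}$.

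Finally, I would take the $L^1(s)$ norm and change variables $k=\pm k_0+\epsilon K$, contributing a Jacobian $dk=\epsilon\,dK$; the $\epsilon$-powers balance as $\epsilon^{-1}\cdot\epsilon^2\cdot\epsilon=\epsilon^2$, and the quadratic factor becomes $\epsilon^2 K^2$. The weight $(1+k^2)^{s/2}$ is bounded by a constant depending only on $k_0$, $\delta$, and $s$ on the Fourier support of $\widehat{\psi}_{\pm 1}$, which lies in $\{|k\mp k_0|\le\delta\}$; the remaining $K$-integrals, of the form $\int(1+K^2)|\widehat{A}_{\pm 1}(K)|\,dK$ and $\int|\partial_T\widehat{A}_{\pm 1}(K)|\,dK$, are uniformly bounded in $\epsilon$ by Cauchy--Schwarz combined with the compact Fourier support of $A_{\pm 1}$ and the hypothesis $s_A\ge 6$. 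I do not expect a serious obstacle here: the heart of the argument is the second-order vanishing of $\omega(k)\mp\omega_0-c_g(k\mp k_0)$ near $k=\pm k_0$, which is precisely what the group-velocity ansatz was designed to produce, and the rest is a scaling bookkeeping parallel to Lemma 3.3 of \cite{D}.
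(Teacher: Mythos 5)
Your proof is correct and follows essentially the route the paper relies on: the paper gives no proof of this lemma, merely citing the analogous Lemma 3.3 of \cite{D}, and that argument is exactly your direct Fourier-space computation exploiting the second-order vanishing of $\omega(k)\mp\omega_{0}-c_{g}(k\mp k_{0})$ at $k=\pm k_{0}$ (via $c_{g}=\omega'(k_{0})$ and the oddness of $\omega$) together with the scaling of the Jacobian. The bookkeeping $\epsilon^{-1}\cdot\epsilon^{2}\cdot\epsilon=\epsilon^{2}$, the boundedness of the weight $(1+k^{2})^{s/2}$ on the compact Fourier support of $\widehat{\psi}_{\pm1}$, and the use of the NLS equation to control $\partial_{T}A_{\pm1}$ are all as expected and close the estimate.
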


\section{\textbf{The basic ideas}}
In order to explain our method to prove Theorem \ref{Thm1}, we use the diagonalized system \eqref{equation7} of the {normalized Euler-Poisson equation \eqref{equation1},}
\begin{equation}
\begin{split}\label{abstract}
\partial_{t}U=\Lambda U+Q(U,U)+N(U),
\end{split}
\end{equation}
where $Q(U,U)$ and $N(U)$ are given by \eqref{equation8}. Recalling that $U=U(x,t)\in\mathbb{R}^{2}$, $x\in\Bbb R$, $t\in\mathbb{R}^+$, $\Lambda$ being a operator whose symbol is a diagonal matrix of the form
\begin{equation*}
\begin{split}
\widehat{\Lambda}(k)=diag\{i\omega(k),-i\omega(k)\}.
\end{split}
\end{equation*}
%where $k\in\mathbb{R}$ and $\omega$ is a piecewise smooth real-valued odd function defined by \eqref{equation3}, $Q$ is a symmetric bilinear operator and $N$ is a high order operator whose order is at least cubic. We remark that the symmetricity of $Q(\cdot,\cdot)$ here will not influence the result essentially, and can be generalized to the non-symmetric case. Therefore, although the quadratic term is not symmetric for the Euler-Poisson system \eqref{equation1}, the abstract framework can still be applied.
We write the $j_{1}$-th component of $Q(U,U)$ as
\begin{equation*}
\begin{split}
\widehat{Q}_{j_{1}}(U,U)
=\sum_{j_{2},j_{3}=\pm1}\int \widehat{\eta}_{j_{2},j_{3}}^{j_{1}}(k,k-m,m)\widehat{U}_{j_{2}}(k-m)\widehat{U}_{j_{3}}(m)dm,
\end{split}
\end{equation*}
where $j_{1}\in\{\pm1\}$ and $\widehat{\eta}_{j_{2},j_{3}}^{j_{1}}(k,k-m,m)$ is the kernel function of $\widehat{Q}_{j_{1}}(U,U)$. Let $\widehat{\widetilde{\eta}}_{j_{2},j_{3}}^{j_{1}}(k,k-m,m)$ be the kernel function of $\widehat{N}_{j_{1}}(U)$, according to the equation \eqref{equation9} in Fourier form, we have
\begin{equation}
\begin{split}\label{quu}
\big|\widehat{\eta}_{j_{2},j_{3}}^{j_{1}}(k,k-m,m)\big|, \ \big|\widehat{\widetilde{\eta}}_{j_{2},j_{3}}^{j_{1}}(k,k-m,m)\big|\leq C |k|,
\end{split}
\end{equation}
for any $k,m\in\mathbb{R}$.
%The ion Euler-Poisson equation \eqref{equation1} after diagonalizing falls into the framework of \eqref{abstract} and the linear operator $\Lambda$ generates a uniformly bounded semigroup.

We know that $U$ is formally approximated by $\epsilon\widetilde{\Psi}$ according to the Section 2, i.e. the residual
\begin{equation}
\begin{split}\label{resU}
Res(U)=-\partial_{t}U+\Lambda U+Q(U,U)+N(U),
\end{split}
\end{equation}
is small for $U=\epsilon\widetilde{\Psi}$. And the residual can be made arbitrarily small by modifying the formal approximation $\epsilon\widetilde{\Psi}$ (refer to the equation \eqref{equation122} and the Lemma \ref{L4}), i.e. for all $\gamma>0$ there exists a formal approximation $\epsilon\Psi$ that has compact support set comparing to the approximation $\epsilon\widetilde{\Psi}$,
such that
\begin{equation*}
\begin{split}
Res_{U}(\epsilon\Psi)=\mathcal{O}(\epsilon^{\gamma}).
\end{split}
\end{equation*}
{As in Section 2, the modified approximation $\epsilon\Psi$ satisfies}
\begin{equation}
\begin{split}\label{Phic}
&\epsilon\Psi=\epsilon\Psi_{c}+\epsilon^{2}\Psi_{p},\\
&\Psi_{c}:=\Psi_{1}+\Psi_{-1}=(\psi_{1}+\psi_{-1},0)^{\top}=:(\phi_{c},0)^{\top},
\Psi_{p}=(\psi_{p_{1}},\psi_{p_{-1}})^{\top},\\
&\text{\text{supp}}\widehat{\psi}_{\pm1}=\big\{k\mid|k\pm k_{0}|\leq\delta\big\},\\
&\text{supp}\widehat{\psi}_{p_{\pm1}}=\big\{k\mid|k\pm jk_{0}|\leq\delta, \ j=0,\pm2,\pm3,\pm4\big\},
\end{split}
\end{equation}
and
 \begin{equation*}
\begin{split}
\epsilon\Psi-\epsilon\widetilde{\Psi}=\mathcal{O}(\epsilon^{2}),
\end{split}
\end{equation*}
where $\delta>0$ sufficiently small, but independent of $0<\epsilon\ll1$.
In order to prove Theorem \ref{Thm1} we have to estimate the error
 \begin{equation}
\begin{split}\label{before}
\epsilon^{\beta}R=U-\epsilon\Psi,
\end{split}
\end{equation}
to be of order $\mathcal{O}(\epsilon^{\beta})$ for some $\beta>1$ on a time scale $t\in[0,T_{0}/\epsilon^{2}]$, i.e. we have to prove that $R$ is of order $\mathcal{O}(1)$ for all $t\in[0,T_{0}/\epsilon^{2}]$. Inserting \eqref{before} into \eqref{abstract} we find that the error $R$ satisfies
\begin{equation}
\begin{split}\label{error}
\partial_{t}R=&\Lambda R+2\epsilon Q(\Psi_{c},R)+2\epsilon^{2} Q(\Psi_{p},R)+\epsilon^{\beta} Q(R,R)+\epsilon^{2}N(R)+\epsilon^{-\beta}Res_{U}(\epsilon\Psi).
\end{split}
\end{equation}
For our equation, the linear operator $\Lambda$ generates a uniformly bounded semigroup. If $\beta>2$, which we assume henceforth, the terms $\epsilon^{2} Q(\Psi_{p},R)$, $\epsilon^{\beta} Q(R,R)$ and $\epsilon^{2}N(R)$ can be controlled over the relevant time interval. Also, the residual term $\epsilon^{-\beta}Res_{U}(\epsilon\Psi)$ can be made of $\mathcal{O}(\epsilon^{2})$ by choosing the approximation $\epsilon\Psi$ appropriately. However, the remaining linear term $\epsilon Q(\Psi_{c},R)$ can perturb the linear evolution such  that the solutions begin to grow on time scales $\mathcal{O}(\epsilon^{-1})$ and hence we would lose all control over the size of $R$ on the desired time scale $\mathcal{O}(\epsilon^{-2})$.
To show that the error remains small over the desired time intervals $\mathcal{O}(\epsilon^{-2})$, we need to eliminate the quadratic term $2\epsilon Q(\Psi_{c},R)$ from \eqref{error} via a normal form transformation. That is to say, we make a change of dependent variable of the form
\begin{equation}
\begin{split}\label{equation45}
\widetilde{R}_{j_{1}}:=R_{j_{1}}+\epsilon \sum_{j_{2}\in\{\pm1\}}B_{j_{1},j_{2}}(\Psi_{c},R_{j_{2}}), \ j_{1}\in\{\pm1\},
\end{split}
\end{equation}
where
\begin{equation}
\begin{split}\label{equation46}
\widehat{B}_{j_{1}j_{2}}(\Psi_{c},R_{j_{2}})=\int \widehat{b}_{j_{1},j_{2}}(k,k-\ell,\ell)\widehat{\phi}_{c}(k-\ell)\widehat{R}_{j_{2}}(\ell)d\ell,
\end{split}
\end{equation}
where we have used the equation \eqref{Phic} and $\widehat{R}_{j_{2}}$ refers to the $j_{2}$ component of $\widehat{R}$. Careful calculations show that the kernel function $\widehat{b}_{j_{1},j_{2}}$ of the normal form transformation can be written as a quotient whose denominator is
\begin{equation}
\begin{split}\label{15'}
-j_{1}\omega(k)-\omega(k-\ell)+j_{2}\omega(\ell).
\end{split}
\end{equation}
As long as the denominator remains away from zero, such a normal form transformation is well defined. That is to say, a non-resonance condition has to be satisfied:
\begin{equation}
\begin{split}\label{equation1,1}
|j_{1}\omega(k)+\omega(k_{0})-j_{2}\omega(k-k_{0})|>0,
\end{split}
\end{equation}
for $j_{1},j_{2}\in\{\pm1\}$ and for all $k\in \mathbb{R}$ uniformly. It is easy to see that $\omega(k)=k\widehat{q}(k)$ in \eqref{equation3} for the ion Euler-Poisson equation does not satisfy \eqref{equation1,1}. In particular, there is a resonance at the wave number $k=0$ (whenever $j_2=-1$), which is trivial and eventually causes no problems for the definition of the normal form transformation  because the nonlinear term also vanishes linearly at $k=0$ and hence $\widehat{B}_{j_{1}j_{2}}$  of \eqref{equation46} can be well defined for all $|k|\leq\delta$. However, there is always another resonance for the wave number $k=k_{0}$ (for $j_1=-1$), which turns out to be nontrivial. Therefore, we can not take the normal form method of \cite{Z} directly. For this, we introduce a suitable rescaling of the error function $R$ dependent on the wave number, and then to use a number of special normal form transformations to treat such a nontrivial resonance, as did in \cite{S,S98}.

More precisely, we scale the variable $R$ to reflect the fact that the nonlinearity vanishes at $k=0$. For some $\delta>0$ above sufficiently small, but independent of $0<\epsilon\ll1$, define a weight function $\vartheta$ by its Fourier transform
\begin{equation}
\begin{split}\label{v}
\widehat{\vartheta}(k)=\Big\{\begin{matrix} 1\ \ \ \ \ \ \ \ \ \ \ \ \ \ \ \ \ \ \ \ \ \ \ \ \ \ \ \ \text{for} \ \ |k|>\delta, \\ \epsilon+(1-\epsilon)| k|/\delta \ \ \ \ \ \ \ \ \ \ \text{for} \ \ |k|\leq\delta.\end{matrix}
\end{split}
\end{equation}
This makes $\widehat{\vartheta}(k)\widehat{R}(k)$ small at the wave numbers close to zero to reflect the fact that the nonlinearity vanishes at $k=0$.
Rewrite the solution $U$ of \eqref{before} as a sum of the approximation and error, i.e.,
\begin{equation}
\begin{split}\label{equation48}
U=\epsilon\Psi+\epsilon^{\beta}\vartheta R,
\end{split}
\end{equation}
with a $\beta>2$. Here and hereafter, we define $\vartheta R$ by $\widehat{\vartheta R}=\widehat{\vartheta} \widehat{R}$ to avoid writing the convolution $\vartheta\ast R$.

Inserting \eqref{equation48} into \eqref{abstract}, we find that the error $R$ satisfies
\begin{equation}
\begin{split}\label{equation49}
\partial_{t}R=& \Lambda R+2\epsilon \vartheta^{-1}Q(\Psi_{c},\vartheta R)+2\epsilon^{2}\vartheta^{-1}Q(\Psi_{p},\vartheta R)+\epsilon^{\beta} \vartheta^{-1}Q(\vartheta R, \vartheta R)\\
&+\epsilon^{2}\vartheta^{-1}N(\vartheta R)+\epsilon^{-\beta}\vartheta^{-1}Res_{U}(\epsilon\Psi).
\end{split}
\end{equation}
Due to the inequality \eqref{quu} and $\vartheta^{-1}$ is at most of order $\mathcal{O}(1/\epsilon)$, we find that the terms $\epsilon^{2}\vartheta^{-1}Q(\Psi_{p},\vartheta R)$ and $\epsilon^{2}\vartheta^{-1}N(\vartheta R)$ are at least of order $\mathcal{O}(\epsilon^{2})$. Therefore, all terms on the RHS of \eqref{equation49} are at least of $\mathcal{O}(\epsilon^{2})$ except for the linear term $2\epsilon \vartheta^{-1}Q(\Psi_{c},\vartheta R)$. We also note that the term $\Lambda R$ can be explicitly computed and causes no growth in $R$.
Besides, note that whether $k$ is close to or far away from zero not will directly influences the size of the nonlinear term in \eqref{equation49} in Fourier space due to the derivative acting on it. Hence to separate the behavior in these two regions,  we define projection operators $P^{0}$ and $P^{1}$ by the Fourier multiplier
\begin{equation}
\begin{split}\label{equation58}
\widehat{P}^{0}(k)=\chi_{\mid k\mid\leq\delta}(k)\ \ \ \text{and} \ \ \ \widehat{P}^{1}(k)=\mathbf{1}-\widehat{P}^{0}(k),
\end{split}
\end{equation}
for a $\delta>0$ sufficiently small (the same constant $\delta$ in the definition of $\vartheta$), but independent of $0<\epsilon\ll1$. When necessary we will write $R=R^{0}+R^{1}$ with $R^{j}=P^{j}R$, for $j=0,1$.
Acting the projection operators $P^{0}$ and $P^{1}$ on the equation \eqref{equation49}, we will obtain the following evolutionary equations for $R^{0}$ and $R^{1}$,
\begin{equation}
\begin{split}\label{R0}
\partial_{t}R^{0}=\Lambda R^{0}+2\epsilon P^{0}\vartheta^{-1} Q(\Psi_{c},\vartheta R^{0})+2\epsilon P^{0}\vartheta^{-1} Q(\Psi_{c},\vartheta R^{1})+\mathcal{O}(\epsilon^{2}),
\end{split}
\end{equation}
and
\begin{equation}
\begin{split}\label{R1,1}
\partial_{t}R^{1}=\Lambda R^{1}+2\epsilon P^{1}\vartheta^{-1} Q(\Psi_{c},\vartheta_{0} R^{0})+2\epsilon P^{1}\vartheta^{-1} Q(\Psi_{c},\vartheta R^{1})+\mathcal{O}(\epsilon^{2}),
\end{split}
\end{equation}
where $\vartheta_{0}=\vartheta-\epsilon$. Noting that since $\widehat{\Psi}_{c}(k-m)=0$ unless $|(k-m)\pm k_{0}|<\delta$ and $\widehat{R}^{0}(m)=0$ for $|m|>\delta$, we see that $P^{0}\vartheta^{-1} Q(\Psi_{c},\vartheta R^{0})=0$. We want to estimate the error $(R^{0},R^{1})$ on a time scale $\mathcal{O}(1/\epsilon^{2})$, thus we need to eliminate the $\mathcal{O}(\epsilon)$ terms from \eqref{R0} and \eqref{R1,1} by using the normal form transformation
\begin{equation}
\begin{split}\label{B01}
\widetilde{R}_{j_{1}}^{0}:=R_{j_{1}}^{0}+\epsilon \sum_{j_{2}\in\{\pm1\}}B^{0,1}_{j_{1},j_{2}}(\Psi_{c},R_{j_{2}}^{1}),
\end{split}
\end{equation}
\begin{equation}
\begin{split}\label{B1a}
\widetilde{R}_{j_{1}}^{1}:=R_{j_{1}}^{1}+\epsilon \sum_{j_{2}\in\{\pm1\}}\big(B_{j_{1},j_{2}}^{1,0}(\Psi_{c},R_{j_{2}}^{0})+ B_{j_{1},j_{2}}^{1,1}(\Psi_{c},R_{j_{2}}^{1})\big),
\end{split}
\end{equation}
where
\begin{equation*}
\begin{split}
B^{0,1}_{j_{1},j_{2}}(\Psi_{c},R_{j_{2}}^{1})=\int_{\mathbb{R}}\widehat{b}^{0,1}_{j_{1},j_{2}}(k,k-m,m)\widehat{\phi}_{c}(k-m)\widehat{R}_{j_{2}}^{1}(m)dm.
\end{split}
\end{equation*}
Here $j_{1},j_{2}\in\{\pm1\}$ and we have used the equation \eqref{Phic}. Similarly for $B^{1,0}(\Psi_{c},R^{0})$ and $B^{1,1}(\Psi_{c},R^{1})$.
Inserting \eqref{B01} and \eqref{B1a} into \eqref{R0} and \eqref{R1,1} respectively, and then letting the $\mathcal{O}(\epsilon)$ terms equal to zero formally, we will obtain
\begin{equation}
\begin{split}\label{equation111}
&\partial_{t}\widetilde{R}^{0}=\Lambda\widetilde{R}^{0}+\epsilon^{2}f(\Psi,\widetilde{R})+\epsilon^{-\beta}Res_{U^{0}}(\epsilon\Psi),\\
&\partial_{t}\widetilde{R}^{1}=\Lambda\widetilde{R}^{1}+\epsilon^{2}g(\Psi,\widetilde{R})+\epsilon^{-\beta}Res_{U^{1}}(\epsilon\Psi),
\end{split}
\end{equation}
provided these normal form transformations are all invertible and well-defined.
Fortunately, by careful analysis we find that these normal form transformations are all invertible and well-defined in the Euler-Poisson system considered in the present paper. Summarizing, by introducing the cutoff function $\vartheta$ and in particular $\vartheta_{0}$, both the trivial resonance $k=0$ and the non-trivial resonances $k=\pm k_{0}$ do not cause problems for the definition of the normal form transformations.
Besides, the normal form transformation $B^{1,0}(\Psi_{c},\widetilde{R}^{0})$ satisfies
\begin{equation*}
\begin{split}
\|\epsilon B^{1,0}_{j_{1},j_{2}}(\Psi_{c},\widetilde{R}_{j_{2}}^{0})\|_{H^{s'}}\lesssim\epsilon\|\widetilde{R}^{0}\|_{H^{s}},
\end{split}
\end{equation*}
for any $s,s'\geq6$.

However, there still exist some difficulties in order to obtain uniform estimates for the remainder $\widetilde{R}$ in \eqref{equation111}. First, though the trivial resonance $k=0$ associated to $B^{0,1}(\Psi_{c},R^{1})$ does not cause difficulties, $B^{0,1}(\Psi_{c},R^{1})$ will lose one $\epsilon$ because $\vartheta^{-1}=\mathcal{O}(1/\epsilon)$ for $|k|<\delta$, i.e. for arbitrary $s,s'\geq6$, we have
\begin{equation*}
\begin{split}
\|\vartheta B^{0,1}_{j_{1},j_{2}}(\Psi_{c},\widetilde{R}_{j_{2}}^{1})\|_{H^{s'}}\lesssim\|\widetilde{R}^{1}\|_{H^{s}}.
\end{split}
\end{equation*}
This means that $\epsilon^{2}f(\Psi,\widetilde{R})$ is indeed of order $\mathcal{O}(\epsilon)$, although it looks like $\mathcal{O}(\epsilon^2)$. Thus we still need to eliminate such an $\mathcal{O}(\epsilon)$ term in the first equation of \eqref{equation111} by a second normal form transformation but applied only to $\widetilde{R}^0$ to obtain a new error function $\mathcal{R}^{0}$, which together with $\widetilde{R}^{1}$ satisfies
\begin{subequations}\label{R01}
\begin{numcases}{}
\partial_{t}\mathcal{R}^{0}=\Lambda\mathcal{R}^{0}+\epsilon^{2}\widetilde{f}(\Psi,\mathcal{R}^{0},\widetilde{R}^{1})+\epsilon^{-\beta}Res_{U^{0}}(\epsilon\Psi),\label{R01-1}\\
\partial_{t}\widetilde{R}^{1}=\Lambda\widetilde{R}^{1}+\epsilon^{2}\widetilde{g}(\Psi,\mathcal{R}^{0},\widetilde{R}^{1})+\epsilon^{-\beta}Res_{U^{1}}(\epsilon\Psi),
\label{R01-2}
\end{numcases}
\end{subequations}
where $\widetilde{f}(\Psi,\mathcal{R}^{0},\widetilde{R}^{1})$ is of order $\mathcal{O}(1)$ and does not lose derivative. Traditionally, one would then consider the energy estimates starting from \eqref{R01}, since all the terms on the right are of order $\mathcal{O}(\epsilon^2)$. However this will cause the following second problem.

Secondly, we can not use the variation of constants formula and Gronwall's inequality to bound $(\mathcal{R}^{0},\widetilde{R}^{1})$ for the equation \eqref{R01}, not because of missing powers of $\epsilon$ but due to regularity problems. Note that the quadratic term $\epsilon\vartheta^{-1}P^{1}Q(\Psi_{c},\vartheta R^{1})$ of \eqref{R1,1} is quasilinear and loses one derivative, i.e.,  $R^{1}\mapsto\epsilon\vartheta^{-1}P^{1}Q(\Psi_{c},\vartheta R^{1})$ maps $H^{m+1}(\mathbb{R},\mathbb{C})$ into $H^{m}(\mathbb{R},\mathbb{C})$ or $C^{m+1}(\mathbb{R},\mathbb{C})$ into $C^{m}(\mathbb{R},\mathbb{C})$. This causes the term $B^{1,1}(\Psi_{c},R^{1})$ to lose one derivative, which implies that the term $\widetilde{g}(\Psi,\mathcal{R}^{0},\widetilde{R}^{1})$ loses even two derivatives in total.
%\textcolor{blue}{Which is reason that it is difficult in the process of obtaining uniform energy estimation according to the form of the $\Lambda$.}

In order to overcome the above two difficulties at the same time, we still make the normal form transformation twice on the error function $R^{0}$ but not on $R^{1}$. By a similar procedure, we obtain the evolutionary equations for $(\mathcal{R}^{0}, R^{1})$,
\begin{subequations}\label{R01,10}
\begin{numcases}{}
\partial_{t}\mathcal{R}^{0}=\Lambda\mathcal{R}^{0}+\epsilon^{2}\overline{f}(\Psi,\mathcal{R}^{0},R^{1})
+\epsilon^{-\beta}Res_{U^{0}}(\epsilon\Psi),\label{R01,10-1}\\
\partial_{t}R^{1}=\Lambda R^{1}+\epsilon\overline{g}(\Psi,\mathcal{R}^{0},R^{1})+\epsilon^{-\beta}Res_{U^{1}}(\epsilon\Psi),\label{R01,10-2}
\label{R01-2}
\end{numcases}
\end{subequations}
where $\overline{f}$ and $\overline{g}$ do not lose $\epsilon$ but $\overline{g}$ may only lose one derivative. Then we use the transformed remainder $\mathcal{R}^{0}$ of \eqref{R01,10-1} for low frequency $|k|\leq\delta$, and the non-transformed remainder $R^{1}$ of \eqref{R01,10-2} for high frequency $|k|\geq\delta$ combined with $\epsilon B^{1,0}(\Psi,\mathcal{R}^{0})$ and $\epsilon B^{1,1}(\Psi,R^{1})$ to define the energy
\begin{equation}
\begin{split}\label{Es}
\mathcal{E}_{s}=&\sum_{\ell=0}^{s}\Big[\frac{1}{2}\big(\int_{\mathbb{R}}(\partial_{x}^{\ell}\mathcal{R}^{0})^{2}dx+
\int_{\mathbb{R}}(\partial_{x}^{\ell}R^{1})^{2}dx\big)\\
&+\epsilon\big(
\int_{\mathbb{R}}\partial_{x}^{\ell}R^{1}\partial_{x}^{\ell}B^{1,0}(\Psi_{c},\mathcal{R}^{0})dx
+\int_{\mathbb{R}}\partial_{x}^{\ell}R^{1}\partial_{x}^{\ell}B^{1,1}(\Psi_{c},R^{1})dx\big)\Big],
\end{split}
\end{equation}
where $s=s_{A}\geq6$. Although there is an $\mathcal O(\epsilon)$ term $\epsilon\bar g(\cdot,\cdot,\cdot)$ in \eqref{R01,10-2}, we can eliminate the $\mathcal O(\epsilon)$ terms and keep only the $\mathcal O(\epsilon^2)$ {terms in the evolutionary equation of $\mathcal E_s$} by the carefully constructed energy functional $\mathcal E_s$ in \eqref{Es}. {The strategy of definition of the energy using the normal form transformation was already used in previous papers \cite{D1,D12,H,H16,IT,SW,SW02}.} But here we would like to remark that there are some basic differences in this paper.

%This strategy only use the normal form transformation was already used in \cite{D12} as an ingredient to simplify the proof of error estimates compared with the alternative proofs in \cite{SW,SW02}. To overcome regularity problems caused by quasilinear terms, this strategy was first used in \cite{H} and was further developed in \cite{H16,IT} to apply it to the water wave problem with infinite depth. In these three papers, structural properties of the Hilbert transform help to construct and estimate the energy.

Let us explain why we use $(\mathcal R^0,R^1)$ to construct the energy functional $\mathcal E_s$. On one hand, for the high frequency component $R^1$, if we start form \eqref{R01}, although both the evolution equation of $\mathcal{E}_{s}$ and $\|(\mathcal{R}^{0},\widetilde{R}^{1})\|_{H^{s}}^{2}$ in \eqref{R01} share the property that their right-hand side terms are all of order $\mathcal{O}(\epsilon^{2})$, $\tilde g(\cdot,\cdot,\cdot)$ loses two derivatives in total, which will leads to the difficulty for closing energy estimate of $\widetilde{R}^1$. On the other hand, the method is not unique to deal with the low frequency component $R^0$. In this paper, we do twice normal-form translations for $R^0$ to obtain the equation for $\mathcal R^0$. However, since the normal-form leading from $R^0$ to $\mathcal R^0$ only involves bounded frequencies, then we can also put $R^0$ with the related normal-form transforms in the energy argument $\mathcal E_s$ as done for $R^1$.

We can show $\|B^{1,0}(\Psi_{c},\mathcal{R}^{0})\|_{H^{s}}\lesssim\|\mathcal{R}^{0}\|_{H^{s}}$ in Lemma \ref{L10} as well as the equivalence between $\|(\mathcal{R}^{0},R^{1})\|_{H^{s}}^{2}$ and $\|(R^{0},R^{1})\|_{H^{s}}^{2}$ by the form of the normal form transformations for $|k|\leq\delta$. See \eqref{equation92} for details.  Besides, $B^{1,1}(\Psi_{c},R^{1})$ can be split into a term of the form $diag\big\{h_{1}(\Psi),h_{2}(\Psi)\big\}\partial_{x}R^{1}$ and terms that do not lose regularity in Lemma \ref{L8}, which is very important to obtain closed energy estimates. Then by integration by parts and using the inequality $\|B^{1,0}(\Psi_{c},\mathcal{R}^{0})\|_{H^{s}}\lesssim\|\mathcal{R}^{0}\|_{H^{s}}$, we can obtain the equivalence between ${\mathcal{E}_{s}}$ and $\|(\mathcal{R}^{0},R^{1})\|_{H^{s}}^{2}$ for sufficiently small $\epsilon$, which finally yields the equivalence between the energy $\mathcal E_s$ and the original remainder $\|(R^{0},R^{1})\|_{H^{s}}^{2}$ in \eqref{before}.

Finally, the structure of $\Lambda$ and the properties of $\omega$ allow us to construct a modified energy
\begin{equation}
\begin{split}\label{Ess}
\widetilde{\mathcal{E}}_{s}=\mathcal{E}_{s}+\epsilon^{2}h,
\end{split}
\end{equation}
where $h=\mathcal{O}\big(\|(\mathcal{R}^{0},{R}^{1})\|_{H^{s}}^{2}\big)$ as long as $\|(\mathcal{R}^{0},{R}^{1})\|_{H^{s}}$ is $\mathcal{O}(1)$. Consequently, we obtain
\begin{equation*}
\begin{split}
\partial_{t}\widetilde{\mathcal{E}}_{s}\leq C\epsilon^{2}(\widetilde{\mathcal{E}}_{s}+1),
\end{split}
\end{equation*}
as long as $\|(\mathcal{R}^{0},{R}^{1})\|_{H^{s}}$ is $\mathcal{O}(1)$ such that Gronwall's inequality yields the $\mathcal{O}(1)$ boundedness of $\widetilde{\mathcal{E}}_{s}$ and hence of $R$ for all $t\in[0,T_{0}/\epsilon^{2}]$.

\section{\textbf{The normal form transformation}}
As mentioned in Section 3, we need to eliminate the $\mathcal{O}(\epsilon)$ terms in the error equation for $R^{0}$ (i.e. low frequency $|k|<\delta$) by normal form transformations. We define a weight function $\vartheta$ to reflect the fact that the nonlinearity vanishes at $k=0$,
\begin{equation}
\begin{split}\label{va}
\widehat{\vartheta}(k)=\Big\{\begin{matrix} 1\ \ \ \ \ \ \ \ \ \ \ \ \ \ \ \ \ \ \ \ \ \ \ \ \ \ \ \ \text{for} \ \ |k|>\delta, \\ \epsilon+(1-\epsilon)| k|/\delta \ \ \ \ \ \ \ \ \ \ \text{for} \ \ |k|\leq\delta,\end{matrix}
\end{split}
\end{equation}
for some $\delta>0$ sufficiently small, but independent of $0<\epsilon\ll1$,
write the solution $U$ of \eqref{equation9} as a sum of the approximation and the error, i.e.,
\begin{equation}
\begin{split}\label{equat48}
U=\epsilon\Psi+\epsilon^{5/2}\vartheta R,
\end{split}
\end{equation}
where to avoid writing the convolution $\vartheta\ast R$, $\vartheta R$ is defined by $\widehat{\vartheta R}=\widehat{\vartheta} \widehat{R}$ in a slight abuse of notation. Note that $\widehat{\vartheta}(k) \widehat{R}(k)$ is small at the wave numbers close to zero, since the nonlinearity vanishes at $k=0$.

%Note that whether $k$ is close to zero or not will directly influence the size of the nonlinear term in \eqref{equation6} in Fourier space due to the derivative acting on it. Hence to separate the behavior in these two regions,
Then we define two projection operators $P^{0}$ and $P^{1}$ by the Fourier multiplier
\begin{equation}
\begin{split}\label{equation58}
\widehat{P}^{0}(k)=\chi_{\mid k\mid\leq\delta}(k)\ \ \ and \ \ \ \widehat{P}^{1}(k)=\mathbf{1}-\widehat{P}^{0}(k),
\end{split}
\end{equation}
for $\delta>0$ sufficiently small (the same constant $\delta$ in the definition of $\vartheta$), but independent of $0<\epsilon\ll1$. When necessary we will write $R=R^{0}+R^{1}$ with $R^{j}=P^{j}R$, for $j=0,1$. In the following, the superscripts $0,1$ always denote the spectrum projections, and should not confused with the subscripts that denote the component of $R$.

Recall the form of $\Psi$ in the equation \eqref{equation122}. For simplicity, let
\begin{equation*}
\begin{split}
\phi_{c}:=\psi_{1}+\psi_{-1}, \ \phi_{p_{1}}:=\psi_{p1}+\psi_{p-1}, \ \phi_{p_{2}}:=\psi_{p1}-\psi_{p-1}.
\end{split}
\end{equation*}
Then we have $\text{supp}\widehat{\phi}_{c}=\big\{k\mid|k\pm k_{0}|<\delta\big\}$ and $\text{supp}\widehat{\phi}_{p_{1,2}}=\big\{k\mid|k\pm jk_{0}|<\delta,j=0,\pm2,\pm3,\pm4\big\}$. Besides,
noting that since $\widehat{\phi}_{c}(k-m)=0$ unless $|(k-m)\pm k_{0}|<\delta$ and since $\widehat{R}_{j}^{0}(m)=0$ for $|m|>\delta$, we see that $P^{0}(\phi_{c}R_{j}^{0})=0$ for $j=\pm1$.

Inserting \eqref{equat48} into \eqref{equation9} and by the projection operators $P^{0}$ and $P^{1}$, we obtain evolutionary equations for $R^{0}$ and $R^{1}$ in Fourier transformation with $j_{1}\in\{\pm1\}$:
\begin{align}\label{equ1}
\partial_{t}\widehat{R}_{j_{1}}^{0}=&ij_{1}\omega(k)\widehat{R}_{j_{1}}^{0}
+\epsilon\frac{\widehat{P}^{0}(k)ik}{2\widehat{\vartheta}(k)}\widehat{\phi}_{c}\ast \big(\widehat{q}\widehat{\vartheta}(\widehat{R}_{1}^{1}-\widehat{R}_{-1}^{1})\big)
+\epsilon\frac{\widehat{P}^{0}(k)ik}{2\widehat{\vartheta}(k)}(\widehat{q}\widehat{\phi}_{c})\ast \big(\widehat{\vartheta}(\widehat{R}_{1}^{1}+\widehat{R}_{-1}^{1})\big)\nonumber\\
&+j_{1}\epsilon\frac{\widehat{P}^{0}(k)ik}{2\widehat{\vartheta}(k)\widehat{q}(k)}(\widehat{q}\widehat{\phi}_{c})\ast \big(\widehat{q}\widehat{\vartheta}(\widehat{R}_{1}^{1}-\widehat{R}_{-1}^{1})\big)
-j_{1}\epsilon\frac{\widehat{P}^{0}(k)ik}{2\widehat{\vartheta}(k)\widehat{q}(k)}(\widehat{\phi}_{c})
\ast\big(\widehat{\vartheta}(\widehat{R}_{1}^{1}+\widehat{R}_{-1}^{1})\big)\nonumber\\
&-j_{1}\epsilon\frac{\widehat{P}^{0}(k)ik}{2\widehat{\vartheta}(k)\langle k\rangle^{2}\widehat{q}(k)}\big(\langle\widehat{\partial_{x}}\rangle^{-2}\widehat{\phi}_{c}\big)\ast \big(\langle\widehat{\partial_{x}}\rangle^{-2}\widehat{\vartheta}(\widehat{R}_{1}^{1}+\widehat{R}_{-1}^{1})\big)\nonumber\\
&+\epsilon^{2}\frac{\widehat{P}^{0}(k)ik}{2\widehat{\vartheta}(k)}\widehat{\phi}_{p_{1}}\ast\big(\widehat{q}\widehat{\vartheta}
(\widehat{R}_{1}^{0}-\widehat{R}_{-1}^{0}+\widehat{R}_{1}^{1}-\widehat{R}_{-1}^{1})\big)\nonumber\\
&+j_{1}\epsilon^{2}\frac{\widehat{P}^{0}(k)ik}{2\widehat{\vartheta}(k)\widehat{q}(k)}\widehat{q}\widehat{\phi}_{p_{2}}\ast\big(\widehat{q}\widehat{\vartheta}
(\widehat{R}_{1}^{0}-\widehat{R}_{-1}^{0}+\widehat{R}_{1}^{1}-\widehat{R}_{-1}^{1})\big)\nonumber\\
&+\epsilon^{2}\frac{\widehat{P}^{0}(k)ik}{2\widehat{\vartheta}(k)}\widehat{q}\widehat{\phi}_{p_{2}}\ast\big(\widehat{\vartheta}
(\widehat{R}_{1}^{0}+\widehat{R}_{-1}^{0}+\widehat{R}_{1}^{1}+\widehat{R}_{-1}^{1})\big)\nonumber\\
&-j_{1}\epsilon^{2}\frac{\widehat{P}^{0}(k)ik}{2\widehat{\vartheta}(k)\widehat{q}(k)}(\widehat{\phi}_{p_{1}})\ast\big(\widehat{\vartheta}
(\widehat{R}_{1}^{0}+\widehat{R}_{-1}^{0}+\widehat{R}_{1}^{1}+\widehat{R}_{-1}^{1})\big)\nonumber\\
&-j_{1}\epsilon^{2}\frac{\widehat{P}^{0}(k)ik}{2\widehat{\vartheta}(k)\langle k\rangle^{2}\widehat{q}(k)}\big(\langle\widehat{\partial_{x}}\rangle^{-2}\widehat{\phi}_{p_{1}}\big)\ast \big(\langle\widehat{\partial_{x}}\rangle^{-2}\widehat{\vartheta}(\widehat{R}_{1}^{0}+\widehat{R}_{-1}^{0}+\widehat{R}_{1}^{1}+\widehat{R}_{-1}^{1})\big)\nonumber\\
&+\epsilon^{5/2}\frac{\widehat{P}^{0}(k)ik}{2\widehat{\vartheta}(k)}
\big(\widehat{\vartheta}(\widehat{R}_{1}^{0}+\widehat{R}_{-1}^{0}+\widehat{R}_{1}^{1}+\widehat{R}_{-1}^{1})\big)\ast
\widehat{q}\widehat{\vartheta}\big(\widehat{R}_{1}^{0}-\widehat{R}_{-1}^{0}+\widehat{R}_{1}^{1}-\widehat{R}_{-1}^{1})\big)\nonumber\\
&-j_{1}\epsilon^{5/2}\frac{\widehat{P}^{0}(k)ik}{4\widehat{\vartheta}(k)\widehat{q}(k)}\big(\widehat{\vartheta}
(\widehat{R}_{1}^{0}+\widehat{R}_{-1}^{0}+\widehat{R}_{1}^{1}+\widehat{R}_{-1}^{1})\big)^{\ast^{2}}\nonumber\\
&-j_{1}\epsilon^{5/2}\frac{\widehat{P}^{0}(k)ik}{4\widehat{\vartheta}(k)\langle k\rangle^{2}\widehat{q}(k)}\big(\langle\widehat{\partial_{x}}\rangle^{-2}\widehat{\vartheta}
(\widehat{R}_{1}^{0}+\widehat{R}_{-1}^{0}+\widehat{R}_{1}^{1}+\widehat{R}_{-1}^{1})\big)^{\ast^{2}}\nonumber\\
&+j_{1}\epsilon^{5/2}\frac{\widehat{P}^{0}(k)ik}{4\widehat{\vartheta}(k)\widehat{q}(k)}\big(\widehat{q}\widehat{\vartheta}
(\widehat{R}_{1}^{0}+\widehat{R}_{-1}^{0}+\widehat{R}_{1}^{1}+\widehat{R}_{-1}^{1})\big)^{\ast^{2}}\nonumber\\
&+j_{1}\epsilon^{2}\frac{\widehat{P}^{0}(k)ik}{2\widehat{\vartheta}(k)\widehat{q}(k)}\widehat{\mathcal{G}}
\widehat{\vartheta}\big(\widehat{R}_{1}^{0}+\widehat{R}_{-1}^{0}+\widehat{R}_{1}^{1}+\widehat{R}_{-1}^{1}\big)
+j_{1}\epsilon^{2}\frac{\widehat{P}^{0}(k)ik}{2\widehat{\vartheta}(k)\widehat{q}(k)}\widehat{\mathcal{M}}
\widehat{\vartheta}\big(\widehat{R}_{1}^{0}+\widehat{R}_{-1}^{0}+\widehat{R}_{1}^{1}+\widehat{R}_{-1}^{1}\big)\nonumber\\
&+\epsilon^{-5/2}\widehat{Res}_{U_{j_{1}}^{0}(\epsilon\Psi)}\nonumber\\
=&:ij_{1}\omega(k)\widehat{R}_{j_{1}}^{0}+\epsilon\frac{\widehat{P}^{0}(k)ik}{2\widehat{\vartheta}(k)}
\sum_{j_{2}\in\{\pm1\}}\sum_{n=1}^{5}\widehat{\alpha}_{j_{1},j_{2}}^{n}(k,k-m,m)\widehat{\phi}_{c}\ast \widehat{\vartheta}\widehat{R}_{j_{2}}^{1}
+\epsilon^{2}\mathcal{F}^{1},
\end{align}
and
\begin{align}\label{equ1,-1}
\partial_{t}\widehat{R}_{j_{1}}^{1}=&ij_{1}\omega(k)\widehat{R}_{j_{1}}^{1}
+\epsilon\frac{\widehat{P}^{1}(k)ik}{2\widehat{\vartheta}(k)}\widehat{\phi}_{c}\ast \big(\widehat{q}\widehat{\vartheta}(\widehat{R}_{1}^{0}-\widehat{R}_{-1}^{0})\big)
+\epsilon\frac{\widehat{P}^{1}(k)ik}{2\widehat{\vartheta}(k)}(\widehat{q}\widehat{\phi}_{c})\ast \big(\widehat{\vartheta}(\widehat{R}_{1}^{0}+\widehat{R}_{-1}^{0})\big)\nonumber\\
&+j_{1}\epsilon\frac{\widehat{P}^{1}(k)ik}{2\widehat{\vartheta}(k)\widehat{q}(k)}(\widehat{q}\widehat{\phi}_{c})\ast \big(\widehat{q}\widehat{\vartheta}(\widehat{R}_{1}^{0}-\widehat{R}_{-1}^{0})\big)
-j_{1}\epsilon\frac{\widehat{P}^{1}(k)ik}{2\widehat{\vartheta}(k)\widehat{q}(k)}(\widehat{\phi}_{c})\ast
\big(\widehat{\vartheta}(\widehat{R}_{1}^{0}+\widehat{R}_{-1}^{0})\big)\nonumber\\
&-j_{1}\epsilon\frac{\widehat{P}^{1}(k)ik}{2\widehat{\vartheta}(k)\langle k\rangle^{2}\widehat{q}(k)}\big(\langle\widehat{\partial_{x}}\rangle^{-2}\widehat{\phi}_{c}\big)\ast \big(\langle\widehat{\partial_{x}}\rangle^{-2}\widehat{\vartheta}(\widehat{R}_{1}^{0}+\widehat{R}_{-1}^{0})\big)\nonumber\\
&+\epsilon\frac{\widehat{P}^{1}(k)ik}{2\widehat{\vartheta}(k)}\widehat{\phi}_{c}\ast \big(\widehat{q}\widehat{\vartheta}(\widehat{R}_{1}^{1}-\widehat{R}_{-1}^{1})\big)
+\epsilon\frac{\widehat{P}^{1}(k)ik}{2\widehat{\vartheta}(k)}(\widehat{q}\widehat{\phi}_{c})\ast \big(\widehat{\vartheta}(\widehat{R}_{1}^{1}+\widehat{R}_{-1}^{1})\big)\nonumber\\
&+j_{1}\epsilon\frac{\widehat{P}^{1}(k)ik}{2\widehat{\vartheta}(k)\widehat{q}(k)}(\widehat{q}\widehat{\phi}_{c})\ast \big(\widehat{q}\widehat{\vartheta}(\widehat{R}_{1}^{1}-\widehat{R}_{-1}^{1})\big)
-j_{1}\epsilon\frac{\widehat{P}^{1}(k)ik}{2\widehat{\vartheta}(k)\widehat{q}(k)}
(\widehat{\phi}_{c})\ast\big(\widehat{\vartheta}(\widehat{R}_{1}^{1}+\widehat{R}_{-1}^{1})\big)\nonumber\\
&-j_{1}\epsilon\frac{\widehat{P}^{1}(k)ik}{2\widehat{\vartheta}(k)\langle k\rangle^{2}\widehat{q}(k)}\big(\langle\widehat{\partial_{x}}\rangle^{-2}\widehat{\phi}_{c}\big)\ast \big(\langle\widehat{\partial_{x}}\rangle^{-2}\widehat{\vartheta}(\widehat{R}_{1}^{1}+\widehat{R}_{-1}^{1})\big)\nonumber\\
&+\epsilon^{2}\frac{\widehat{P}^{1}(k)ik}{2\widehat{\vartheta}(k)}\widehat{\phi}_{p_{1}}\ast\big(\widehat{q}\widehat{\vartheta}
(\widehat{R}_{1}^{0}-\widehat{R}_{-1}^{0}+\widehat{R}_{1}^{1}-\widehat{R}_{-1}^{1})\big)\nonumber\\
&+\epsilon^{2}\frac{\widehat{P}^{1}(k)ik}{2\widehat{\vartheta}(k)}\widehat{q}\widehat{\phi}_{p_{2}}\ast\big(\widehat{\vartheta}
(\widehat{R}_{1}^{0}+\widehat{R}_{-1}^{0}+\widehat{R}_{1}^{1}+\widehat{R}_{-1}^{1})\big)\nonumber\\
&+j_{1}\epsilon^{2}\frac{\widehat{P}^{1}(k)ik}{2\widehat{\vartheta}(k)\widehat{q}(k)}\widehat{q}\widehat{\phi}_{p_{2}}\ast\big(\widehat{q}\widehat{\vartheta}
(\widehat{R}_{1}^{0}-\widehat{R}_{-1}^{0}+\widehat{R}_{1}^{1}-\widehat{R}_{-1}^{1})\big)\nonumber\\
&-j_{1}\epsilon^{2}\frac{\widehat{P}^{1}(k)ik}{2\widehat{\vartheta}(k)\widehat{q}(k)}(\widehat{\phi}_{p_{1}})\ast\big(\widehat{\vartheta}
(\widehat{R}_{1}^{0}+\widehat{R}_{-1}^{0}+\widehat{R}_{1}^{1}+\widehat{R}_{-1}^{1})\big)\nonumber\\
&-j_{1}\epsilon^{2}\frac{\widehat{P}^{1}(k)ik}{2\widehat{\vartheta}(k)\langle k\rangle^{2}\widehat{q}(k)}\big(\langle\widehat{\partial_{x}}\rangle^{-2}\widehat{\phi}_{p_{1}}\big)\ast \big(\langle\widehat{\partial_{x}}\rangle^{-2}\widehat{\vartheta}
(\widehat{R}_{1}^{0}+\widehat{R}_{-1}^{0}+\widehat{R}_{1}^{1}+\widehat{R}_{-1}^{1})\big)\nonumber\\
&+\epsilon^{5/2}\frac{\widehat{P}^{1}(k)ik}{2\widehat{\vartheta}(k)}
\big(\widehat{\vartheta}(\widehat{R}_{1}^{0}+\widehat{R}_{-1}^{0}+\widehat{R}_{1}^{1}+\widehat{R}_{-1}^{1})\big)\ast
\widehat{q}\widehat{\vartheta}\big(\widehat{R}_{1}^{0}-\widehat{R}_{-1}^{0}+\widehat{R}_{1}^{1}-\widehat{R}_{-1}^{1})\big)\nonumber\\
&+j_{1}\epsilon^{5/2}\frac{\widehat{P}^{1}(k)ik}{4\widehat{\vartheta}(k)\widehat{q}(k)}\big(\widehat{q}\widehat{\vartheta}
(\widehat{R}_{1}^{0}-\widehat{R}_{-1}^{0}+\widehat{R}_{1}^{1}-\widehat{R}_{-1}^{1})\big)^{\ast^{2}}\nonumber\\
&-j_{1}\epsilon^{5/2}\frac{\widehat{P}^{1}(k)ik}{4\widehat{\vartheta}(k)\widehat{q}(k)}
\big(\widehat{\vartheta}(\widehat{R}_{1}^{0}+\widehat{R}_{-1}^{0}+\widehat{R}_{1}^{1}+\widehat{R}_{-1}^{1})\big)^{\ast^{2}}\nonumber\\
&-j_{1}\epsilon^{5/2}\frac{\widehat{P}^{1}(k)ik}{4\widehat{\vartheta}(k)\langle k\rangle^{2}\widehat{q}(k)}\big(\langle\widehat{\partial_{x}}\rangle^{-2}\widehat{\vartheta}
(\widehat{R}_{1}^{0}+\widehat{R}_{-1}^{0}+\widehat{R}_{1}^{1}+\widehat{R}_{-1}^{1})\big)^{\ast^{2}}\nonumber\\
&+j_{1}\epsilon^{2}\frac{\widehat{P}^{1}(k)ik}{2\widehat{\vartheta}(k)\widehat{q}(k)}\widehat{\mathcal{G}}
\widehat{\vartheta}\big(\widehat{R}_{1}^{0}+\widehat{R}_{-1}^{0}+\widehat{R}_{1}^{1}+\widehat{R}_{-1}^{1}\big)
+j_{1}\epsilon^{2}\frac{\widehat{P}^{1}(k)ik}{2\widehat{\vartheta}(k)\widehat{q}(k)}\widehat{\mathcal{M}}
\widehat{\vartheta}\big(\widehat{R}_{1}^{0}+\widehat{R}_{-1}^{0}+\widehat{R}_{1}^{1}+\widehat{R}_{-1}^{1}\big)\nonumber\\
&+\epsilon^{-5/2}\widehat{Res}_{U_{j_{1}}^{1}}(\epsilon\Psi)\nonumber\\
=:&ij_{1}\omega(k)\widehat{R}_{j_{1}}^{1}+\epsilon\frac{\widehat{P}^{1}(k)ik}{2\widehat{\vartheta}(k)}
\sum_{j_{2}\in\{\pm1\}}\sum_{n=1}^{5}\widehat{\alpha}_{j_{1},j_{2}}^{n}(k,k-m,m)(\widehat{\phi}_{c}\ast \widehat{\vartheta}\widehat{R}_{j_{2}}^{0})\nonumber\\
& \ \ +\epsilon\frac{\widehat{P}^{1}(k)ik}{2\widehat{\vartheta}(k)}\sum_{j_{2}\in\{\pm1\}}
\sum_{n=1}^{5}\widehat{\alpha}_{j_{1},j_{2}}^{n}(k,k-m,m)(\widehat{\phi}_{c}\ast \widehat{\vartheta}\widehat{R}_{j_{2}}^{1})
+\epsilon^{2}\mathcal{F}^{2},
\end{align}
where the notation $\mathcal{G}$ may depend on the error $(R^{0},R^{1})$, but do not lose regularity. The notation $\epsilon^{2}\mathcal{F}^{1}$ means the $H^{s'}$ norm of this term can be bounded by $C\epsilon^{2}$ if $R$ is in some bounded neighborhood of the origin in $H^{s}$ for any $s,s'>0$, since the coefficients of $\epsilon^{2}\mathcal{F}^{1}$ are equal to $C\big|\epsilon^{2}\frac{k}{\vartheta}\big|\leq C\epsilon^{2}$ for $|k|<\delta$. Similarly, $\epsilon^{2}\mathcal{F}^{2}$ means the $H^{s-1}$ norm of this term can be bounded by $C\epsilon^{2}$ if $R$ is in some bounded neighborhood of the origin in $H^{s}$.

Note that $|\widehat{\alpha}_{j_{1},j_{2}}^{n}(k,k-m,m)|\leq C$ for all $k,m\in\mathbb{R}$.
In the following we will attempt to construct normal form transformations to eliminate the $\mathcal{O}(\epsilon)$ terms from \eqref{equ1} and examine their effect on the full equation \eqref{equ1}. For this purpose, we consider the first normal form transformation of the form
\begin{equation}
\begin{split}\label{equ3}
\widetilde{R}_{j_{1}}^{0}=R_{j_{1}}^{0}+\epsilon \sum_{j_{2}\in\{\pm1\}}\sum_{n=1}^{5}B_{j_{1},j_{2}}^{0,1,n}(\phi_{c},R_{j_{2}}^{1}),
\end{split}
\end{equation}
where
\begin{equation}
\begin{split}\label{equ4}
\widehat{B}_{j_{1},j_{2}}^{0,1,n}(\phi_{c},R_{j_{2}}^{1})=\int \widehat{b}_{j_{1},j_{2}}^{0,1,n}(k,k-m,m)\widehat{\phi}_{c}(k-m)\widehat{R}_{j_{2}}(m)dm.
\end{split}
\end{equation}

\emph{Construction of $B_{j_{1},j_{2}}^{0,1,n}$}. Differentiating the $\widetilde{R}_{j_{1}}^{0}$ in \eqref{equ3} w.r.t. $t$, we obtain
\begin{equation}
\begin{split}\label{equ5}
\partial_{t}\widetilde{R}_{j_{1}}^{0}=\partial_{t}R_{j_{1}}^{0}+\epsilon\sum_{j_{2}\in\{\pm1\}}\sum_{n=1}^{5}B_{j_{1},j_{2}}^{0,1,n}(\partial_{t}\phi_{c},R_{j_{2}}^{1})
+\epsilon\sum_{j_{2}\in\{\pm1\}}\sum_{n=1}^{5} B_{j_{1},j_{2}}^{0,1,n}(\phi_{c},\partial_{t}R_{j_{2}}^{1}).
\end{split}
\end{equation}
Recall that $\widehat{\Omega u}(k)=i\omega(k)\widehat{u}(k)$ and $\|\partial_{t}\widehat{\phi}_{c}+i\omega\widehat{\phi}_{c}\|_{L^{1}(s)}\leq C\epsilon^{2}$ in Lemma \ref{L5}. Then provided the transformation $B_{j_{1},j_{2}}^{0,1,n}$ is well-defined and bounded, we have
\begin{equation}
\begin{split}\label{equ6}
\partial_{t}\widetilde{R}_{j_{1}}^{0}=&j_{1}\Omega\widetilde{R}_{j_{1}}^{0}
+\epsilon \sum_{j_{2}\in\{\pm1\}}\sum_{n=1}^{5}\Big[-j_{1}\Omega B_{j_{1},j_{2}}^{0,1,n}(\phi_{c},R_{j_{2}}^{1})
+\frac{P^{0}\partial_{x}}{2\vartheta}\alpha_{j_{1},j_{2}}^{n}(\phi_{c} \vartheta R_{j_{2}}^{1})\\
&\ \ \ \ \ \ \ \ \ \ \ \ \ \ \ \ \ \ \ \ \ \ \ \ \ \ \ \ \ \ \ \ \ \ \ \ \ -B_{j_{1},j_{2}}^{0,1,n}(\Omega\phi_{c},R_{j_{2}}^{1})
+ j_{2}B_{j_{1},j_{2}}^{0,1,n}(\phi_{c},\Omega R_{j_{2}}^{1})\Big]\\
&+\epsilon^{2}\sum_{j_{2},j_{3}\in\{\pm1\}}\sum_{n,\tilde{n}=1}^{5}B_{j_{1},j_{2}}^{0,1,n}
\big(\phi_{c},\frac{P^{1}\partial_{x}}{2\vartheta}\alpha_{j_{2},j_{3}}^{\tilde{n}}(\phi_{c}\vartheta R_{j_{3}}^{1})\big)
+\epsilon^{2}\mathcal{F}^{3},
\end{split}
\end{equation}
%where
%\begin{equation}
%\begin{split}\label{equ61,1}
%\epsilon^{2}B_{j_{1},j_{2}}^{0,1,n}&(\phi_{c},\sum_{j_{3}\in\{\pm1\}}\sum_{m=1}^{5}\alpha_{j_{2},j_{3}}^{m}\phi_{c}R_{j_{3}}^{1})\\
%=&\epsilon^{2}B_{j_{1},j_{2}}^{0,1,n}(\phi_{c},\frac{\partial_{x}P^{1}}{2\vartheta}(\phi_{c}q\vartheta(R_{j_{1}}^{1}-R_{-j_{1}}^{1})))\\
%&+ \epsilon^{2}B_{j_{1},j_{2}}^{0,1,n}\big(\phi_{c},\frac{\partial_{x}P^{1}}{2\vartheta}(q\phi_{c}\vartheta(R_{j_{1}}^{1}+R_{-j_{1}}^{1}))\big)
%+ \epsilon^{2}B_{j_{1},j_{2}}^{0,1,n}\big(\phi_{c},j_{2}\frac{\partial_{x}P^{1}}{2\vartheta\widehat{q}(k)}
%(q\phi_{c}q\vartheta(R_{j_{1}}^{1}+R_{-j_{1}}^{1}))\big)\\
%&+ \epsilon^{2}B_{j_{1},j_{2}}^{0,1,n}\big(\phi_{c},-j_{2}\frac{\partial_{x}P^{1}}{2\vartheta\widehat{q}(k)}
%(\phi_{c}\vartheta(R_{j_{1}}^{1}+R_{-j_{1}}^{1}))\big)\\
%&+ \epsilon^{2}B_{j_{1},j_{2}}^{0,1,n}\big(\phi_{c},-j_{2}\frac{\partial_{x}P^{1}}{2\vartheta\langle\partial_{x}\rangle^{2}\widehat{q}(k)}
%      (\langle\partial_{x}\rangle^{-2}\phi_{c}\langle\partial_{x}\rangle^{-2}\vartheta(R_{j_{1}}^{1}+R_{-j_{1}}^{1}))\big).
%\end{split}
%\end{equation}
where $\epsilon^{2}\mathcal{F}^{3}$ have the same property with $\epsilon^{2}\mathcal{F}^{1}$.
To eliminate all terms which are formally $\mathcal{O}(\epsilon)$ of \eqref{equ6}, we choose $B^{0,1,n}$ so that
\begin{equation*}
\begin{split}
-j_{1}\Omega B_{j_{1},j_{2}}^{0,1,n}(\phi_{c},R_{j_{2}}^{1})
+\frac{P^{0}\partial_{x}}{2\vartheta}\alpha_{j_{1},j_{2}}^{n}(\phi_{c} \vartheta R_{j_{2}}^{1})
-B_{j_{1},j_{2}}^{0,1,n}(\Omega\phi_{c},R_{j_{2}}^{1})
+ j_{2}B_{j_{1},j_{2}}^{0,1,n}(\phi_{c},\Omega R_{j_{2}}^{1})=0.
\end{split}
\end{equation*}
It is equivalent to require that the kernel of $B^{0,1,n}$ be of the form
\begin{equation}
\begin{split}\label{equ7}
\widehat{b}_{j_{1},j_{2}}^{0,1,n}(k,k-m,m)=\frac{ -k\widehat{P}^{0}(k)\widehat{\alpha}_{j_{1},j_{2}}^{n}(k,k-m,m)}
{-j_{1}\omega(k)-\omega(k-m)+j_{2}\omega(m)}\frac{\widehat{\vartheta}(m)}{2\widehat{\vartheta}(k)}.
\end{split}
\end{equation}
Note that the kernel $\widehat{b}_{j_{1},j_{2}}^{0,1,n}$ has to be analyzed  only for $|(k-m)\pm k_{0}|<\delta$, \ $|k|\leq\delta$ and $|m|>\delta$,  since $\widehat{P}^{0}$ and $\widehat{\phi}_{c}$ are localized near $k=0$ and $k-m=\pm k_{0}$ respectively. Thus the resonance at $k=0$ will play a role for $B_{j_{1},j_{2}}^{0,1,n}$. Note also that if we consider the denominator of this kernel $\widehat{b}_{j_{1},j_{2}}^{0,1,n}$ near $k=0$, then
\begin{equation*}
\begin{split}
-j_{1}\omega(k)-\omega(k-m)+j_{2}\omega(m)
=-j_{1}\omega'(0)k-\big(\omega(-m)+\omega'(-m)k\big)+j_{2}\omega(m)+\mathcal{O}(k^{2}).
\end{split}
\end{equation*}
If $j_{2}=1$, this equality is bounded by some $\mathcal{O}(1)$ constant for all $|k|<\delta$. On the other hand, if $j_{2}=-1$, there exists a positive constant $C$ such that
\begin{equation}
\begin{split}\label{equ8}
\big|-j_{1}\omega(k)-\omega(k-m)+j_{2}\omega(m)\big|\geq C|k|.
\end{split}
\end{equation}
Thus there exists some $C\geq 0$ such that
\begin{equation}
\begin{split}\label{equ9}
\big|\widehat{\vartheta}(k)\widehat{b}_{j_{1},j_{2}}^{0,1,n}(k,k-m,m)\big|\leq C,
\end{split}
\end{equation}
for all $|k|\leq \delta$ and $n=1,2,3,4,5$. Since the factor of $\widehat{P}^{0}(k)$ makes $\widehat{b}_{j_{1},j_{2}}^{0,1,n}(k,k-m,m)=0$ if $|k|>\delta$, for any $s'>0$, there exists $C_{s'}$ such that
\begin{equation}
\begin{split}\label{equ1,1}
\big\|\varepsilon B_{j_{1},j_{2}}^{0,1,n}(\phi_{c},R_{j_{2}}^{1})\big\|_{H^{s'}}\leq C_{s'}\|R_{j_{2}}^{1}\|_{H^{s}},
\end{split}
\end{equation}
given $R_{j_{2}}^{1}\in H^{s}$ for some $s\geq6$. In particular, it holds when $s'=s$. However, we cannot assume that $C_{s'}\sim\mathcal{O}(\varepsilon)$ since $\widehat{\vartheta}^{-1}(k)\sim\varepsilon^{-1}$ for $k\approx0$, in spite of the factor of $\varepsilon$ in front of $B_{j_{1},j_{2}}^{0,1,n}$.

It is worth noting that although the terms $\epsilon^{2}B_{j_{1},j_{2}}^{0,1,n}\big(\phi_{c},\frac{P^{1}\partial_{x}}{2\vartheta}\alpha_{j_{2},j_{3}}^{\tilde{n}}(\phi_{c}\vartheta R_{j_{3}}^{1})\big)$ appearing in \eqref{equ6} are formally to be $\mathcal{O}(\epsilon^{2})$, the kernel of the transformation $B^{0,1,n}_{j_{1},j_{2}}$ is indeed $\mathcal{O}(\epsilon^{-1})$ for certain wave numbers so that this term is in fact only $\mathcal{O}(\epsilon)$ for those waves numbers and must therefore be retained. Therefore a second normal form transformation is needed.
Before constructing the second normal form transformation, we prove the following Lemma which will simplify the discussion in the sequel and will allow us to extract the real dangerous terms from $\epsilon^{2}B_{j_{1},j_{2}}^{0,1,n}
(\phi_{c},\frac{P^{1}\partial_{x}}{2\vartheta}\alpha_{j_{2},j_{3}}^{\tilde{n}}(\phi_{c}\vartheta R_{j_{3}}^{1}))$. This lemma take advantage of the strong localization of $\phi_{c}$ near the wave numbers $\pm k_{0}$ in Fourier space \cite{S}.
\begin{lemma}\label{L6}
Fix $p\in\mathbb{R}$ and assume that $\kappa=\kappa(k,k-m,m)\in C(\mathbb{R},\mathbb{C})$. Assume further that $\psi$ has a finitely supported Fourier transform and that $R\in H^{s}$. Then, \\
(i)\  if $\kappa$ is Lipschitz with respect to its second argument in some neighborhood of $p\in\mathbb{R}$,  there exists $C_{\psi,\kappa,p}>0$ such that
\begin{equation}
\begin{split}\label{equation56}
\Big\|\int&\kappa(\cdot,\cdot-m,m)\widehat{\psi}\big(\frac{\cdot-m-p}{\epsilon}\big)\widehat{R}(m)dm
-\int\kappa(\cdot,p,m)\widehat{\psi}\big(\frac{\cdot-m-p}{\epsilon}\big)\widehat{R}(m)dm\Big\|_{H^{s}}\\
&\leq C_{\psi,\kappa,p}\epsilon\|R\|_{H^{s}},
\end{split}
\end{equation}
(ii) if $\kappa$ is globally Lipschitz with respect to its third argument, there exists $D_{\psi,\kappa}>0$ such that
\begin{equation}
\begin{split}\label{equation57}
\Big\|\int&\kappa(\cdot,\cdot-m,m)\widehat{\psi}\big(\frac{\cdot-m-p}{\epsilon}\big)\widehat{R}(m)dm
-\int\kappa(\cdot,\cdot-m,\cdot-p)\widehat{\psi}\big(\frac{\cdot-m-p}{\epsilon}\big)\widehat{R}(m)dm\Big\|_{H^{s}}\\
&\leq D_{\psi,\kappa}\epsilon\|R\|_{H^{s}}.
\end{split}
\end{equation}
\end{lemma}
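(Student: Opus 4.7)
The underlying principle for both parts is identical: the rescaled bump $\widehat{\psi}((\cdot-m-p)/\epsilon)$ is localized on the slab $|k-m-p|\lesssim\epsilon$, so the Lipschitz hypothesis converts the pointwise discrepancy of $\kappa$ into a gain of one factor of $\epsilon$. The plan is to formalize this by writing
\begin{equation*}
\bigl[\kappa(k,k-m,m)-\kappa(k,p,m)\bigr]\,\widehat{\psi}\!\left(\tfrac{k-m-p}{\epsilon}\right)
=\epsilon\,\beta(k,k-m,m)\,\widehat{\psi}_1\!\left(\tfrac{k-m-p}{\epsilon}\right),
\end{equation*}
where $\beta(k,k-m,m):=(\kappa(k,k-m,m)-\kappa(k,p,m))/(k-m-p)$ is uniformly bounded on the effective support by the Lipschitz constant of $\kappa$ in its second variable near $p$, and $\widehat{\psi}_1(x):=x\,\widehat{\psi}(x)$ inherits the compact support of $\widehat{\psi}$.

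For part (i), I would then control the $H^s$ norm of the difference by a convolution-type estimate. Pointwise in $k$ one has
\begin{equation*}
\Bigl|\int \beta(k,k-m,m)\,\widehat{\psi}_1\!\left(\tfrac{k-m-p}{\epsilon}\right)\widehat{R}(m)\,dm\Bigr|
\lesssim \int \Bigl|\widehat{\psi}_1\!\left(\tfrac{k-m-p}{\epsilon}\right)\Bigr|\,|\widehat{R}(m)|\,dm,
\end{equation*}
and since $\widehat{\psi}_1$ is compactly supported, the integrand forces $|k-m-p|\leq M\epsilon$. This allows the weight $(1+k^2)^{s/2}$ to be replaced by $(1+m^2)^{s/2}$ up to a multiplicative constant depending on $p$, so that Young's inequality for convolutions in $L^2$ applies directly. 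The scaling identity $\|\widehat{\psi}_1(\cdot/\epsilon)\|_{L^1}=\epsilon\,\|\widehat{\psi}_1\|_{L^1}$ furnishes the final factor of $\epsilon$.

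Part (ii) is structurally identical. The key observation is that on the support of $\widehat{\psi}((k-m-p)/\epsilon)$ one also has $|m-(k-p)|=|k-m-p|\lesssim\epsilon$, so that the global Lipschitz hypothesis in the third argument gives
\begin{equation*}
\bigl|\kappa(k,k-m,m)-\kappa(k,k-m,k-p)\bigr|\leq L_{\kappa}\,|k-m-p|.
\end{equation*}
The remainder of the argument—factoring out $\epsilon$ via $\widehat{\psi}_1$ and invoking the convolution estimate—then proceeds verbatim. The only delicate point I anticipate is the transfer of the $H^s$ weight from the $k$ variable to the $m$ variable, which is harmless here because $p$ and the size of $\text{supp}\,\widehat{\psi}$ are fixed and $\epsilon<1$; no resonance or small-divisor issue appears, which is why the proof remains essentially a quantitative Taylor expansion rather than a substantive normal-form computation.
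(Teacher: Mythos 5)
Your proposal is correct and follows essentially the same route as the paper's proof: both extract the factor $|k-m-p|$ from the Lipschitz hypothesis, exploit the compact support of $\widehat{\psi}$ to localize and to transfer the $H^{s}$ weight from $k$ to $m$, and conclude with Young's inequality together with the $L^{1}$ scaling of the rescaled bump. The only cosmetic difference is your explicit introduction of $\widehat{\psi}_{1}(x)=x\,\widehat{\psi}(x)$, where the paper simply keeps the product $|\ell|\,|\widehat{\psi}(\ell/\epsilon)|$ inside the integral.
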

%Note that there are two important aspects of this lemma. The first is that we fix the second argument of the kernel function $\kappa$ to the value $p$ (or the third to $k-p$) and the second is that the error which we make by this procedure is $\mathcal{O}(\epsilon)$.
\begin{proof}
For (i), we have
 \begin{equation*}
\begin{split}
\Big\|\int&\kappa(\cdot,\cdot-m,m)\widehat{\psi}\big(\frac{\cdot-m-p}{\epsilon}\big)\widehat{R}(m)dm
-\int\kappa(\cdot,\cdot-m,\cdot-p)\widehat{\psi}\big(\frac{\cdot-m-p}{\epsilon}\big)\widehat{R}(m)dm\Big\|^{2}_{H^{s}}\\
&=\int\Big(\int(\kappa(k,k-m,m)-\kappa(k,p,m))\widehat{\psi}(\frac{k-m-p}{\epsilon})\widehat{R}(m)dm\Big)^{2}(1+k^{2})^{s}dk\\
&\leq\int\Big(C_{\kappa}\int|k-m-p|\widehat{\psi}(\frac{k-m-p}{\epsilon})\widehat{R}(m)dm\Big)^{2}(1+k^{2})^{s}dk\\
&\leq C_{\kappa}^{2}\Big(\int(1+\ell)^{s/2}|\ell||\widehat{\psi}(\frac{\ell}{\epsilon})|d\ell\Big)^{2}\|R\|^{2}_{H^{s}}\\
&\leq C_{\psi,\kappa,p}\epsilon^{2}\|R\|^{2}_{H^{s}},
\end{split}
\end{equation*}
thanks to the Young's inequality and the fact that $\widehat{\psi}$ has compact support. The second one (ii) is similar.
\end{proof}

We now construct the second normal form transformation to remove the remaining term $\epsilon^{2}B_{j_{1},j_{2}}^{0,1,n}\big(\phi_{c},\frac{P^{1}\partial_{x}}{2\vartheta}\alpha_{j_{2},j_{3}}^{\tilde{n}}(\phi_{c}\vartheta R_{j_{3}}^{1})\big)$ with $j_{1},j_{2},j_{3}\in\{\pm1\}$ from \eqref{equ6}. First recall that $\psi_{\pm1}$ is supported in a neighborhood of size $\delta$ of $\pm k_{0}$ in Fourier space and $\phi_{c}=\psi_{1}+\psi_{-1}$. Thus
\begin{equation}
\begin{split}\label{equ12}
&\epsilon^{2} B_{j_{1},j_{2}}^{0,1,n}\big(\phi_{c}, \frac{P^{1}\partial_{x}}{2\vartheta}\alpha_{j_{2},j_{3}}^{\tilde{n}}(\phi_{c}\vartheta R_{j_{3}}^{1})\big)\\
=&\epsilon^{2} B_{j_{1},j_{2}}^{0,1,n}\big(\psi_{1},\frac{P^{1}\partial_{x}}{2\vartheta}\alpha_{j_{2},j_{3}}^{\tilde{n}}(\psi_{1}\vartheta R_{j_{3}}^{1})\big)
+\epsilon^{2} B_{j_{1},j_{2}}^{0,1,n}\big(\psi_{-1},\frac{P^{1}\partial_{x}}{2\vartheta}\alpha_{j_{2},j_{3}}^{\tilde{n}}(\psi_{-1}\vartheta R_{j_{3}}^{1})\big)\\
&+\epsilon^{2} B_{j_{1},j_{2}}^{0,1,n}\big(\psi_{1},\frac{P^{1}\partial_{x}}{2\vartheta}\alpha_{j_{2},j_{3}}^{\tilde{n}}(\psi_{-1}\vartheta R_{j_{3}}^{1})\big)
+\epsilon^{2} B_{j_{1},j_{2}}^{0,1,n}\big(\psi_{-1},\frac{P^{1}\partial_{x}}{2\vartheta}\alpha_{j_{2},j_{3}}^{\tilde{n}}(\psi_{1}\vartheta R_{j_{3}}^{1})\big).
\end{split}
\end{equation}
Recalling from \eqref{equ7} that
\begin{equation*}
\begin{split}
\widehat{b}_{j_{1},j_{2}}^{0,1,n}(k,k-\ell,\ell)
=\frac{ -k\widehat{P}^{0}(k)\widehat{\alpha}_{j_{1},j_{2}}^{n}(k,k-\ell,\ell)}{-j_{1}\omega(k)
-\omega(k-\ell)+j_{2}\omega(\ell)}\frac{\widehat{\vartheta}(\ell)}{2\widehat{\vartheta}(k)},
\end{split}
\end{equation*}
each of the four terms on the RHS of \eqref{equ12} can be rewritten as
\begin{equation}
\begin{split}\label{equ13}
\epsilon^{2} B_{j_{1},j_{2}}^{0,1,n}&(\psi_{l},\frac{P^{1}\partial_{x}}{2\vartheta}\alpha_{j_{2},j_{3}}^{\tilde{n}}(\psi_{\nu}\vartheta R_{j_{3}}^{1}))\\
=&\frac{\epsilon^{2}}{2}\sum_{j_{1},j_{2}\in\{1,-1\}}\int\widehat{b}_{j_{1},j_{2}}^{0,1,n}(k,k-\ell,\ell)\widehat{\psi}_{l}(k-\ell)\\
&\times\widehat{\vartheta}^{-1}(\ell)\widehat{P}^{1}(\ell)i\ell\Big(
\int\widehat{\alpha}_{j_{2},j_{3}}^{\tilde{n}}(\ell,\ell-m,m)\widehat{\psi}_{\nu}(\ell-m)
\widehat{\vartheta}(m)\widehat{R}_{j_{3}}^{1}(m)dm\Big)d\ell,
\end{split}
\end{equation}
where $l,\nu\in\{+,-\}$. Applying Lemma \ref{L6} to obtain
\begin{equation}
\begin{split}\label{equation85}
\frac{\epsilon^{2}}{2} \widehat{B}_{j_{1},j_{2}}^{0,1,n}&\Big(\psi_{l}, \  P^{1}(\cdot-lk_{0})i(\cdot-lk_{0})\vartheta^{-1}(\cdot-lk_{0})\alpha^{\tilde{n}}_{j_{2},j_{3}}(\cdot,\cdot-lk_{0},\cdot-nk_{0})
\psi_{\nu}\vartheta(\cdot-\nu k_{0})R_{j_{3}}^{1}\Big)(k)\\
=&\frac{\epsilon^{2}}{2}\int\widehat{b}_{j_{1},j_{2}}^{0,1,n,l,\nu}(k)\widehat{\psi}_{l}(k-\ell)
\vartheta^{-1}(k-lk_{0})\widehat{P}^{1}(k-lk_{0})i(k-lk_{0})\\
&\times\Big(
\int\widehat{\alpha}_{j_{2},j_{3}}^{\tilde{n}}\big(k-lk_{0},lk_{0},k-(l+\nu)k_{0}\big)\widehat{\psi}_{\nu}(\ell-m)
\widehat{\vartheta}\big(k-(l+\nu)k_{0}\big)\widehat{R}_{j_{3}}^{1}(m)dm\Big)d\ell\\
&+\epsilon^{2}\mathcal{F}^{4},
\end{split}
\end{equation}
where $\epsilon^{2}\mathcal{F}^{4}$ has the same property with $\epsilon^{2}\mathcal{F}^{1}$, and $l+\nu$ is interpreted as if $l$ and $\nu$ were $+1$ and $-1$. We also use the abbreviation
\begin{equation*}
\begin{split}
\widehat{b}_{j_{1},j_{2}}^{0,1,n,l,\nu}(k)
=\frac{ -k\widehat{P}^{0}(k)\widehat{\alpha}_{j_{1},j_{2}}^{n}(k,lk_{0},k-lk_{0})}{-j_{1}\omega(k)
-\omega(lk_{0})+j_{2}\omega(k-lk_{0})}\frac{\widehat{\vartheta}\big(k-(l+\nu)k_{0}\big)}{2\widehat{\vartheta}(k)}.
\end{split}
\end{equation*}
\begin{lemma}\label{L7}
There exists $C>0$ such that
\begin{equation*}
\begin{split}
&\Big\|\epsilon^{2} B_{j_{1},j_{2}}^{0,1,n}\big(\psi_{1}, P^{1}\partial_{x}\vartheta^{-1}\alpha_{j_{2},j_{3}}^{\tilde{n}}(\psi_{-1}\vartheta R_{j_{3}}^{1})\big)\Big\|_{H^{s}}
\leq C\epsilon^{2}\|R_{j_{3}}^{1}\|_{H^{s}},\\
&\Big\|\epsilon^{2} B_{j_{1},j_{2}}^{0,1,n}(\psi_{-1}, P^{1}\partial_{x}\vartheta^{-1}\alpha_{j_{2},j_{3}}^{\tilde{n}}(\psi_{1}\vartheta R_{j_{3}}^{1})\big)\Big\|_{H^{s}}
\leq C\epsilon^{2}\|R_{j_{3}}^{1}\|_{H^{s}},
\end{split}
\end{equation*}
with $j_{1},j_{2},j_{3}\in\{\pm1\}$.
\end{lemma}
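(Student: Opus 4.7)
The plan is to start from the representation \eqref{equation85} of the cross terms (taking $l=1,\nu=-1$ or $l=-1,\nu=1$), and exploit two structural cancellations that are specific to these cross terms in order to avoid the usual $\vartheta^{-1}\sim\varepsilon^{-1}$ loss at low frequencies. By the localization Lemma \ref{L6} applied twice, first to the outer $\psi_l$-convolution and then to the inner $\psi_\nu$-convolution, one can replace the slowly varying kernel $\widehat{\alpha}^{\tilde n}_{j_2,j_3}(\ell,\ell-m,m)$ and the factor $\widehat{b}^{0,1,n}_{j_1,j_2}(k,k-\ell,\ell)$ by their values at the centers $\ell=k-lk_0$ and $m=k-(l+\nu)k_0$, up to an error of order $\epsilon\|R^1_{j_3}\|_{H^s}$. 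Combined with the $\epsilon^2$ prefactor this contributes only $\mathcal{O}(\epsilon^3)$ and is therefore harmless.

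The key observation is then that for the cross terms $l+\nu=0$ the factor $\widehat{\vartheta}(k-(l+\nu)k_0)$ appearing in the explicit formula for $\widehat{b}^{0,1,n,l,\nu}_{j_1,j_2}(k)$ coincides with $\widehat{\vartheta}(k)$, and therefore cancels exactly the dangerous $\widehat{\vartheta}(k)^{-1}$ arising from the outer normal-form kernel. Moreover, since $\widehat{P}^0(k)$ restricts $|k|\leq\delta$ and $l=\pm1$, the point $k-lk_0$ lies in a neighborhood of $\mp k_0$ bounded away from zero, so the second dangerous factor $\widehat{\vartheta}^{-1}(k-lk_0)$ is identically $1$ there, and $\widehat{P}^1(k-lk_0)$ equals $1$. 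What remains in the numerator is $-k\,\widehat{P}^0(k)\,\widehat{\alpha}^n_{j_1,j_2}(k,lk_0,k-lk_0)\cdot i(k-lk_0)\cdot\widehat{\alpha}^{\tilde n}_{j_2,j_3}$, while the denominator $-j_1\omega(k)-\omega(lk_0)+j_2\omega(k-lk_0)$ is either bounded below by a positive constant (when $j_2=1$) or behaves like $|k|$ near $k=0$ (when $j_2=-1$, by \eqref{equ8}), in which case the compensating factor $-k$ in the numerator makes the quotient uniformly bounded. All remaining symbol-like factors are thus $L^\infty$ in the relevant domain.

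Hence, after the two applications of Lemma \ref{L6}, the cross term is, up to an $\mathcal{O}(\epsilon^3\|R^1_{j_3}\|_{H^s})$ remainder, an operator of the form
\begin{equation*}
\epsilon^2\int K(k)\,\widehat{\psi}_{l}\ast\widehat{\psi}_\nu(k-m)\,\widehat{\vartheta}(m)\,\widehat{R}^1_{j_3}(m)\,dm,
\end{equation*}
with $K\in L^\infty$ compactly supported in $|k|\leq\delta$, and $\widehat{\psi}_l\ast\widehat{\psi}_\nu$ localized near $(l+\nu)k_0=0$. The condition $|k|\leq\delta$ and the localization of the convolution $\widehat{\psi}_l\ast\widehat{\psi}_\nu$ force $|m|\lesssim\delta$ on the support, so $(1+k^2)^{s/2}$ and $(1+m^2)^{s/2}$ are both $\mathcal{O}(1)$ and weights play no role. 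A direct Cauchy–Schwarz / Young estimate using Lemma \ref{L4}\eqref{equation12-3} to control $\|\widehat{\psi}_l\ast\widehat{\psi}_\nu\|_{L^1}$ then yields the desired bound $C\epsilon^2\|R^1_{j_3}\|_{H^s}$, proving the first inequality; the second is completely symmetric under $l\leftrightarrow\nu$.

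The main subtlety is purely bookkeeping: one has to keep track of where each factor of $\widehat{\vartheta}$ and $\widehat{\vartheta}^{-1}$ comes from (outer $B^{0,1,n}$, inner $\vartheta^{-1}$ from the original equation, or inner $\vartheta$ from the scaling \eqref{equat48}) to confirm that for the cross configurations $l+\nu=0$ they combine into the identity, while the naive $\vartheta^{-1}$-bound would only give $\mathcal{O}(\epsilon)$. It is precisely this cancellation that makes the remaining \emph{non-cross} terms ($l=\nu$) genuinely $\mathcal{O}(\epsilon)$ and hence the ones requiring the second normal form transformation announced after \eqref{equ1,1}.
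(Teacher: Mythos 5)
Your argument is correct and is essentially the paper's own proof: both reduce to the localized kernel \eqref{equation86} via Lemma \ref{L6}, observe that for the cross configurations $l+\nu=0$ the factor $\widehat{\vartheta}\big(k-(l+\nu)k_{0}\big)=\widehat{\vartheta}(k)$ cancels the dangerous $\widehat{\vartheta}(k)^{-1}$ while $\widehat{\vartheta}^{-1}(k-lk_{0})=\widehat{P}^{1}(k-lk_{0})=1$ and the numerator factor $-k$ offsets the $j_{2}=-1$ resonance from \eqref{equ8}, and then bound the $H^{s}$ norm by the supremum of the resulting $\mathcal{O}(1)$ kernel on $|k|\leq\delta$ together with Young's inequality and \eqref{equation12-3}. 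The only quibble is that the localization error should be recorded as $\mathcal{O}(\epsilon^{2}\|R_{j_{3}}^{1}\|_{H^{s}})$ rather than $\mathcal{O}(\epsilon^{3}\|R_{j_{3}}^{1}\|_{H^{s}})$, since the Lipschitz constants entering Lemma \ref{L6} inherit the factor $\widehat{\vartheta}(k)^{-1}\sim\epsilon^{-1}$ near $k=0$; this is still exactly the bound the lemma asserts, so nothing is lost.
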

\begin{proof}
Since $B_{j_{1},j_{2}}^{0,1,n}$ contains the factor $\widehat{P}^{0}(k)$, the integral over $k$ occurring in the $H^{s}$ norm runs only over $|k|<\delta$. Thus we can bound the $H^{s}$ norm by bounding the maximum of the kernel. The first term in Lemma \ref{L7} has the modified kernel
\begin{equation}
\begin{split}\label{equation86}
\epsilon^{2}\widehat{b}_{j_{1},j_{2}}^{0,1,1,+,-}(k)
\widehat{\vartheta}^{-1}(k-k_{0})\widehat{P}^{1}(k-k_{0})i(k-k_{0})\widehat{\alpha}_{j_{2},j_{3}}^{\tilde{n}}(k-k_{0},k_{0},k)\widehat{\vartheta}(k).
\end{split}
\end{equation}
Since $\widehat{\vartheta}(k)\widehat{b}_{j_{1},j_{2}}^{0,1,n,+,-}(k)$ is $\mathcal{O}(1)$ bounded, and all other terms in \eqref{equation86} are $\mathcal{O}(1)$ bounded for $|k|<\delta$, we have an $\mathcal{O}(\epsilon^{2})$ bound for the kernel \eqref{equation86}. The second term in Lemma \ref{L7} can be estimated similarly.
\end{proof}

Thanks to Lemma \ref{L7}, we do not need to eliminate the third and fourth terms in \eqref{equ12} by the normal form transformation. Therefore we turn to the first term in \eqref{equ12}, whose modified kernel has the form %with the aid of Lemma \ref{L6}, we find its
\begin{equation}
\begin{split}\label{equation87}
\frac{\epsilon^{2}}{2}\widehat{b}_{j_{1},j_{2}}^{0,1,n,+,+}(k)
\widehat{\vartheta}^{-1}(k-k_{0})\widehat{P}^{1}(k-k_{0})i(k-k_{0})\widehat{\alpha}_{j_{2},j_{3}}^{\tilde{n}}(k-k_{0},k_{0},k-2k_{0})\widehat{\vartheta}(k-2k_{0}),
\end{split}
\end{equation}
plus some $\mathcal{O}(\epsilon^{2})$ error terms. A similar expression for the kernel of the second term in \eqref{equ12} can be obtained. In contrast to the terms considered in Lemma \ref{L7}, this expression does not contain a factor of $\widehat{\vartheta}(k)$ to offset the $\widehat{\vartheta}(k)$ in the denominator of $\widehat{b}_{j_{1},j_{2}}^{0,1,n,+,+}(k)$ and hence they must be eliminated by a second normal form transformation. We look for a transformation of the form
\begin{equation}
\begin{split}\label{equation88}
\mathcal{R}_{j_{1}}^{0}:=\widetilde{R}_{j_{1}}^{0}+\epsilon\sum_{j_{2},j_{3}\in\{\pm1\}}\sum_{n,\tilde{n}=1}^{5} \big(D_{j_{1},j_{2},j_{3}}^{n,\tilde{n},+}(\psi_{1},\psi_{1},R_{j_{3}}^{1})+ D_{j_{1},j_{2},j_{3}}^{n,\tilde{n},-}(\psi_{-1},\psi_{-1},R_{j_{3}}^{1})\big).
\end{split}
\end{equation}
Differentiating the expression for $\mathcal{R}_{j_{1}}^{0}$, we find that the terms of $\mathcal{\mathcal{O}(\epsilon)}$ in \eqref{equ6} will be eliminated if $D_{j_{1},j_{2},j_{3}}^{n,\tilde{n},+}(\psi_{1},\psi_{1},R_{j_{3}}^{1})$ satisfies
\begin{equation}
\begin{split}\label{equation89}
-j_{1}\Omega& D_{j_{1},j_{2},j_{3}}^{n,\tilde{n},+}(\psi_{1},\psi_{1},R_{j_{3}}^{1})-D_{j_{1},j_{2},j_{3}}^{n,\tilde{n},+}(\Omega\psi_{1},\psi_{1},R_{j_{3}}^{1})
-D_{j_{1},j_{2},j_{3}}^{n,\tilde{n},+}(\psi_{1},\Omega\psi_{1},R_{j_{3}}^{1})\\
&+D_{j_{1},j_{2},j_{3}}^{n,\tilde{n},+}(\psi_{1},\psi_{1},j_{3}\Omega R_{j_{3}}^{1})
=-\frac{\epsilon}{2}B_{j_{1},j_{2}}^{0,1,n}(\psi_{1},\vartheta^{-1}P^{1}\partial_{x}\alpha_{j_{2},j_{3}}^{\tilde{n}}\big(\psi_{1}\vartheta R_{j_{3}}^{1})\big),
\end{split}
\end{equation}
with similar expression for $D_{j_{1},j_{2},j_{3}}^{n,\tilde{n},-}$. We find that we have to set
\begin{equation}
\begin{split}\label{equation90}
\epsilon &\widehat{D}_{j_{1},j_{2},j_{3}}^{n,\tilde{n},+}(\psi_{1},\psi_{1},R_{j_{3}}^{1})\\
=&\frac{\epsilon^{2}}{2}\int_{\mathbb{R}}\widehat{b}_{j_{1},j_{2}}^{0,1,n,+,+}(k)\widehat{\psi}_{1}(k-\ell)
\vartheta^{-1}(k-k_{0})\widehat{P}^{1}(k-k_{0})(k-k_{0})\\
&\times\Big(
\int_{\mathbb{R}}\frac{-\widehat{\alpha}_{j_{2},j_{3}}^{\tilde{n}}(k-k_{0},k_{0},k-2k_{0})\widehat{\psi}_{1}(\ell-m)
\widehat{\vartheta}(k-2k_{0})\widehat{R}_{j_{3}}^{1}(m)}{-j_{1}\omega(k)-\omega(k_{0})
-\omega(k_{0})+j_{3}\omega(k-2k_{0})}dm\Big)d\ell,
\end{split}
\end{equation}
where we have used in the kernel that $k-\ell\approx\ell-m\approx k_{0}$ due to the localization of $\psi_{1}$ so we have $m\approx-2k_{0}$, which is made rigorous with Lemma \ref{L6}. We have to estimate the kernel w.r.t. the sup norm, thanks to the Young's inequality. Since the numerator in this expression is already $\mathcal{O}(\varepsilon)$, the denominator satisfies
\begin{equation*}
\begin{split}
-j_{1}\omega(k)-\omega(k_{0})-\omega(k_{0})+j_{3}\omega(k-2k_{0})\approx-2\omega(k_{0})-j_{3}\omega(2k_{0})\neq0.
\end{split}
\end{equation*}
Therefore the mapping $\epsilon D_{j_{1},j_{2},j_{3}}^{n,\tilde{n},+}$ is well-defined and $\mathcal{O}(\epsilon)$-bounded. The expression for $\epsilon D_{j_{1},j_{2},j_{3}}^{n,\tilde{n},-}$ can be constructed and estimated in a very similar fashion and hence, the normal form is well defined and invertible. We also have for any $s\geq6$, there exists $C>0$ such that
\begin{equation}
\begin{split}\label{equ1,19}
\epsilon\big\|D_{j_{1},j_{2},j_{3}}^{n,\tilde{n},\pm}(\psi_{\pm1},\psi_{\pm1},R_{j_{3}}^{1})\big\|_{H^{s}}
\leq C\epsilon\big\|R_{j_{3}}^{1}\big\|_{H^{s}}.
\end{split}
\end{equation}

Finally, we consider the composition of the two normal form transformations \eqref{equ3} and \eqref{equation88}, i.e.,
\begin{equation}
\begin{split}\label{equation92}
\mathcal{R}_{j_{1}}^{0}&=\widetilde{R}_{j_{1}}^{0}+\epsilon \sum_{j_{2},j_{3}\in\{\pm1\}}\sum_{n,\tilde{n}=1}^{5}\big(D_{j_{1},j_{2},j_{3}}^{n,\tilde{n},+}(\psi_{1},\psi_{1},R_{j_{3}}^{1})+\epsilon D_{j_{1},j_{2},j_{3}}^{n,\tilde{n},-}(\psi_{-1},\psi_{-1},R_{j_{3}}^{1})\big)\\
&=R_{j_{1}}^{0}+\epsilon \sum_{j_{2}\in\{\pm1\}}\sum_{n=1}^{5}B_{j_{1},j_{2}}^{0,1,n}(\phi_{c},R_{j_{2}}^{1})\\
& \ \ \ \ \ \ \ \ \ \  +\epsilon \sum_{j_{2},j_{3}\in\{\pm1\}}\sum_{n,\tilde{n}=1}^{5}\big(D_{j_{1},j_{2},j_{3}}^{n,\tilde{n},+}(\psi_{1},\psi_{1},R_{j_{3}}^{1})+\epsilon D_{j_{1},j_{2},j_{3}}^{n,\tilde{n},-}(\psi_{-1},\psi_{-1},R_{j_{3}}^{1})\big)\\
&= R_{j_{1}}^{0}+\epsilon F_{j_{1}}(R^{1}),
\end{split}
\end{equation}
where $\|\epsilon \vartheta F(R^{1})\|_{H^{s'}}\lesssim\|\epsilon R^{1}\|_{H^{s}}$ for any $s', s\geq6$ due to \eqref{equ9}, \eqref{equ1,1} and \eqref{equ1,19}.
Then we have
\begin{equation}
\begin{split}\label{equation91}
\partial_{t}\mathcal{R}^{0}=\Lambda\mathcal{R}^{0}+\epsilon^{2}\mathcal{F}^{5},
\end{split}
\end{equation}
where $\epsilon^{2}\mathcal{F}^{5}$ has the same property with $\epsilon^{2}\mathcal{F}^{1}$.
Let
\begin{equation}
\begin{split}\label{phi3}
\phi_{3}:=&\phi_{p_{1}}+\epsilon^{1/2}\big(\vartheta (R_{1}^{0}+R_{-1}^{0})+\frac{1}{2}(R_{1}^{1}+R_{-1}^{1})\big),\\
\phi_{4}:=&\phi_{p_{2}}+\epsilon^{1/2}\big(\vartheta (R_{1}^{0}-R_{-1}^{0})+\frac{1}{2}(R_{1}^{1}-R_{-1}^{1})\big).\\
\end{split}
\end{equation}
Recall $\vartheta_{0}=\vartheta-\epsilon$ and rewrite the equation \eqref{equ1,-1} as
\begin{align}\label{equ11-1}
\partial_{t}\widehat{R}_{j_{1}}^{1}
=&ij_{1}\omega(k)\widehat{R}_{j_{1}}^{1}+\epsilon\frac{\widehat{P}^{1}(k)ik}{2\widehat{\vartheta}(k)}
\sum_{j_{2}\in\{\pm1\}}\sum_{n=1}^{5}\widehat{\alpha}_{j_{1},j_{2}}^{n}(k,k-m,m)(\widehat{\phi}_{c}\ast \widehat{\vartheta}_{0}\widehat{\mathcal{R}}_{j_{2}}^{0})\nonumber\\
& \ \ +\epsilon\frac{\widehat{P}^{1}(k)ik}{2\widehat{\vartheta}(k)}\sum_{j_{2}\in\{\pm1\}}
\sum_{n=1}^{5}\widehat{\alpha}_{j_{1},j_{2}}^{n}(k,k-m,m)\big(\widehat{\phi}_{c}\ast \widehat{\vartheta}\widehat{R}_{j_{2}}^{1}\big)\nonumber\\
&+\epsilon^{2}j_{1}\frac{\widehat{P}^{1}(k)ik}{2\widehat{\vartheta}(k)}\widehat{\phi}_{3}\ast\big(\widehat{q}\widehat{\vartheta}
(\widehat{R}_{j_{1}}^{1}-\widehat{R}_{-j_{1}}^{1})\big)
+\epsilon^{2}\frac{\widehat{P}^{1}(k)ik}{2\widehat{\vartheta}(k)}\widehat{q}\widehat{\phi}_{4}\ast\big(\widehat{\vartheta}
(\widehat{R}_{j_{1}}^{1}+\widehat{R}_{-j_{1}}^{1})\big)\nonumber\\
&+\epsilon^{2}\frac{\widehat{P}^{1}(k)ik}{2\widehat{\vartheta}(k)\widehat{q}(k)}\widehat{q}\widehat{\phi}_{4}\ast\big(\widehat{q}\widehat{\vartheta}
(\widehat{R}_{j_{1}}^{1}-\widehat{R}_{-j_{1}}^{1})\big)
-\epsilon^{2}j_{1}\frac{\widehat{P}^{1}(k)ik}{2\widehat{\vartheta}(k)\widehat{q}(k)}\widehat{\phi}_{3}\ast\big(\widehat{\vartheta}
(\widehat{R}_{j_{1}}^{1}+\widehat{R}_{-j_{1}}^{1})\big)\nonumber\\
&-\epsilon^{2}\frac{\widehat{P}^{1}(k)ij_{1}k}{2\widehat{\vartheta}(k)\langle k\rangle^{2}\widehat{q}(k)}\big(\langle\widehat{\partial_{x}}\rangle^{-2}\widehat{\phi}_{3}\big)\ast \big(\langle\widehat{\partial_{x}}\rangle^{-2}\widehat{\vartheta}
(\widehat{R}_{j_{1}}^{1}+\widehat{R}_{-j_{1}}^{1})\big)\nonumber\\
&+\epsilon^{2}\frac{\widehat{P}^{1}(k)ij_{1}k}{2\widehat{\vartheta}(k)\widehat{q}(k)}\widehat{\mathcal{G}}
\widehat{\vartheta}\big(\widehat{R}_{j_{1}}^{1}+\widehat{R}_{-j_{1}}^{1}\big)
+\epsilon^{2}\frac{\widehat{P}^{1}(k)ij_{1}k}{2\widehat{\vartheta}(k)\widehat{q}(k)}\widehat{\mathcal{M}}
\widehat{\vartheta}\big(\widehat{R}_{j_{1}}^{1}+\widehat{R}_{-j_{1}}^{1}\big)
+\epsilon^{2}\mathcal{F}_{j_{1}}^{5}\nonumber\\
&+\epsilon^{-5/2}\widehat{Res}_{U_{j_{1}}^{1}}(\epsilon\Psi)\nonumber\\
=:&ij_{1}\omega(k)\widehat{R}_{j_{1}}^{1}+\epsilon\frac{\widehat{P}^{1}(k)ik}{2\widehat{\vartheta}(k)}
\sum_{j_{2}\in\{\pm1\}}\sum_{n=1}^{5}\widehat{\alpha}_{j_{1},j_{2}}^{n}(k,k-m,m)\big(\widehat{\phi}_{c}\ast \widehat{\vartheta}_{0}\widehat{\mathcal{R}}_{j_{2}}^{0}\big)\nonumber\\
& \ \ +\epsilon\frac{\widehat{P}^{1}(k)ik}{2\widehat{\vartheta}(k)}\sum_{j_{2}\in\{\pm1\}}
\sum_{n=1}^{5}\widehat{\alpha}_{j_{1},j_{2}}^{n}(k,k-m,m)\big(\widehat{\phi}_{c}\ast \widehat{\vartheta}\widehat{R}_{j_{2}}^{1}\big)\nonumber\\
&+(\sum_{m=1}^{7}\widehat{G}_{m})_{j_{1}}+\epsilon^{2}\mathcal{F}_{j_{1}}^{5}+\epsilon^{-5/2}\widehat{Res}_{U_{j_{1}}^{1}}(\epsilon\Psi),
\end{align}
where $\|\mathcal{F}^{5}\|_{H^{s'}}\leq C\|\mathcal{R}^{0},R^{1}\|_{H^{s}}$ for any $s, \ s'>0$.
Recall that $\widehat{\vartheta}(k)=1$ for $|k|>\delta$ and \eqref{phi3}, let $\phi_{1}:=\phi_{c}+\epsilon\phi_{3}$ and $\phi_{2}:=\phi_{c}+\epsilon\phi_{4}$ and insert \eqref{equation92} into \eqref{equ1,-1},  we have
\begin{equation*}
\begin{split}
\phi_{1}=&\phi_{c}+\epsilon\phi_{p_{1}}+\epsilon^{3/2}\Big[\vartheta\big(\mathcal{R}_{1}^{0}+\mathcal{R}_{-1}^{0}
-\epsilon F_{1}(R^{1})-\epsilon F_{-1}(R^{1})
+\frac{1}{2}(R_{1}^{1}+R_{-1}^{1})\Big],\\
\phi_{2}
=&\phi_{c}+\epsilon\phi_{p_{2}}+\epsilon^{3/2}\Big[\vartheta\big(\mathcal{R}_{1}^{0}-\mathcal{R}_{-1}^{0}
-\epsilon F_{1}(R^{1})+\epsilon F_{-1}(R^{1})
+\frac{1}{2}(R_{1}^{1}-R_{-1}^{1})\Big],
\end{split}
\end{equation*}
then we have
\begin{align}\label{R1}
\partial_{t}R_{j_{1}}^{1}=&j_{1}\Omega R_{j_{1}}^{1}
+\epsilon j_{1}\frac{P^{1}\partial_{x}}{2}\big(\phi_{1} q(R_{j_{1}}^{1}-R_{-j_{1}}^{1})\big)
+\epsilon\frac{P^{1}\partial_{x}}{2}\big(q\phi_{2} (R_{j_{1}}^{1}+R_{-j_{1}}^{1})\big)\nonumber\\
&+\epsilon \frac{P^{1}\partial_{x}}{2q}\big(q\phi_{2} q(R_{j_{1}}^{1}-R_{-j_{1}}^{1})\big)
-\epsilon j_{1}\frac{P^{1}\partial_{x}}{2q}\big(\phi_{1} (R_{j_{1}}^{1}+R_{-j_{1}}^{1})\big)\nonumber\\
&-\epsilon j_{1}\frac{P^{1}\partial_{x}\langle\partial_{x}\rangle^{-2}}{2q}\big(\langle\partial_{x}\rangle^{-2}\phi_{1} (\langle\partial_{x}\rangle^{-2}(R_{j_{1}}^{1}+R_{-j_{1}}^{1})\big)\nonumber\\
&+\epsilon^{2}j_{1}\frac{P^{1}\partial_{x}}{2q}\mathcal{G}(R_{j_{1}}^{1}+R_{-j_{1}}^{1})
+\epsilon^{2}j_{1}\frac{P^{1}\partial_{x}}{2q}\mathcal{M}(R_{j_{1}}^{1}+R_{-j_{1}}^{1})\nonumber\\
&+\epsilon^{2}\mathcal{F}_{j_{1}}^{6}
+\epsilon^{-5/2}Res_{U_{j_{1}}^{1}}(\epsilon\Psi)\nonumber\\
=:&j_{1}\Omega R_{j_{1}}^{1}+\epsilon(\sum_{m=1}^{7}D_{m})_{j_{1}}
+\epsilon^{2}\mathcal{F}_{j_{1}}^{6}
+\epsilon^{-5/2}Res_{U_{j_{1}}^{1}}(\epsilon\Psi),
\end{align}
where for arbitrary $s', \ s\geq6$, we have
\begin{equation*}
\begin{split}
\|\mathcal{G}\|_{H^{s}}&\lesssim\|\mathcal{R}^{0},R^{1}\|^{2}_{H^{s}},\\
\|\mathcal{M}\|_{H^{s+2}}&\lesssim\|\mathcal{R}^{0},R^{1}\|^{2}_{H^{s}},\\
\|\mathcal{F}^{6}\|_{H^{s'}}&\lesssim\|\mathcal{R}^{0},R^{1}\|_{H^{s}}.
\end{split}
\end{equation*}

\section{The error estimates}
From the above procedure, we have obtained the equation \eqref{equation91} and \eqref{equ11-1} (or \eqref{R1}) for $(\mathcal{R}^{0},R^{1})$. In order to control the error we define the energy
\begin{equation*}
\begin{split}
\mathcal{E}_{s}=\sum_{\ell=0}^{s}E_{\ell},
\end{split}
\end{equation*}
with
\begin{equation}
\begin{split}\label{equ14}
E_{\ell}=&\sum_{j_{1}\in\{\pm1\}}\bigg[\frac{1}{2}\Big(\int_{\mathbb{R}}(\partial_{x}^{\ell}\mathcal{R}_{j_{1}}^{0})^{2}dx+
\int_{\mathbb{R}}(\partial_{x}^{\ell}R_{j_{1}}^{1})^{2}dx\Big)\\
&+\epsilon\sum_{j_{2}\in\{\pm1\}}\sum_{n=1}^{5}\Big(
\int_{\mathbb{R}}\partial_{x}^{\ell}R_{j_{1}}^{1}\partial_{x}^{\ell}B_{j_{1},j_{2}}^{1,0,n}(\phi_{c},\mathcal{R}_{j_{2}}^{0})dx
+\int_{\mathbb{R}}\partial_{x}^{\ell}R_{j_{1}}^{1}\partial_{x}^{\ell}B_{j_{1},j_{2}}^{1,1,n}(\phi_{c},R_{j_{2}}^{1})dx\Big)\bigg],
\end{split}
\end{equation}
where
\begin{equation}
\begin{split}\label{B10}
\widehat{B}_{j_{1},j_{2}}^{1,0,n}(\phi_{c},\mathcal{R}_{j_{2}}^{0})
=\int_{\mathbb{R}}\widehat{b}_{j_{1},j_{2}}^{1,0,n}(k,k-m,m)\widehat{\phi}_{c}(k-m)\widehat{\mathcal{R}}_{j_{2}}^{0}(m)dm,
\end{split}
\end{equation}
and
\begin{equation}
\begin{split}\label{B1,1}
\widehat{B}_{j_{1},j_{2}}^{1,1,n}(\phi_{c},R_{j_{2}}^{1})
=\int_{\mathbb{R}}\widehat{b}_{j_{1},j_{2}}^{1,1,n}(k,k-m,m)\widehat{\phi}_{c}(k-m)\widehat{R}_{j_{2}}^{1}(m)dm,
\end{split}
\end{equation}
with
\begin{equation}
\begin{split}\label{b10}
b_{j_{1},j_{2}}^{1,0,n}=&\frac{-\frac{1}{2}k\widehat{P}^{1}(k)\widehat{\alpha}_{j_{1},j_{2}}^{n}(k,k-m,m)}{-j_{1}\omega(k)-\omega(k-m)+j_{2}\omega(m)}
\frac{\widehat{\vartheta}_{0}(m)}{\widehat{\vartheta}(k)}\\
=&\frac{-\frac{1}{2}k\widehat{\alpha}_{j_{1},j_{2}}^{n}(k,k-m,m)}{-j_{1}\omega(k)-\omega(k-m)+j_{2}\omega(m)}\widehat{\vartheta}_{0}(m),
\end{split}
\end{equation}
for $|k|>\delta$, $|m|<\delta$ and $|k-m\pm k_{0}|<\delta$,
and
\begin{equation}
\begin{split}\label{b1,1}
b_{j_{1},j_{2}}^{1,1,n}=&\frac{-\frac{1}{2}k\widehat{P}^{1}(k)\widehat{\alpha}_{j_{1},j_{2}}^{n}(k,k-m,m)}{-j_{1}\omega(k)-\omega(k-m)+j_{2}\omega(m)}
\frac{\widehat{\vartheta}(m)}{\widehat{\vartheta}(k)}\\
=&\frac{-\frac{1}{2}k\widehat{\alpha}_{j_{1},j_{2}}^{n}(k,k-m,m)}{-j_{1}\omega(k)-\omega(k-m)+j_{2}\omega(m)},
\end{split}
\end{equation}
for $|k|>\delta$, $|m|>\delta$ and $|k-m\pm k_{0}|<\delta$.
For the normal form transformation $B^{1,0,n}$, we have the following good properties.
\begin{lemma}\label{L10}
For the operators $B_{j_{1},j_{2}}^{1,0,n}(\phi_{c},\mathcal{R}_{j_{2}}^{0})$ for $1\leq n\leq5$, there exists a constant $C$ such that for any $s, s'\geq6$, we have\\
(a)
\begin{equation}
\begin{split}\label{154}
\epsilon\|B_{j_{1},j_{2}}^{1,0,n}(\phi_{c},\mathcal{R}_{j_{2}}^{0})\|_{H^{s'}}\lesssim\epsilon\|\mathcal{R}_{j_{2}}^{0}\|_{H^{s}},
\end{split}
\end{equation}\\
(b) and for all $f\in H^{1}(\mathbb{R},\mathbb{R})$, we have
\begin{equation}
\begin{split}\label{16'}
-j_{1}\Omega B_{j_{1},j_{2}}^{1,0,n}(\phi_{c},f)-B_{j_{1},j_{2}}^{1,0,n}(\Omega\phi_{c},f)
+j_{2} B_{j_{1},j_{2}}^{1,0,n}(\phi_{c},\Omega f)
=-\frac{P^{1}\partial_{x}}{2\vartheta}\alpha_{j_{1},j_{2}}^{n}(\phi_{c}f),
\end{split}
\end{equation}
where the operator $\Omega$ satisfies $\widehat{\Omega u}(k)=i\omega(k)\widehat{u}(k)$, $\omega$ is defined by the symbols \eqref{equation3}.
\end{lemma}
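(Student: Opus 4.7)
\textbf{Plan for Lemma \ref{L10}.} Both parts rest on a direct analysis of the kernel $\widehat{b}_{j_{1},j_{2}}^{1,0,n}(k,k-m,m)$ defined in \eqref{b10}; (b) is a purely algebraic Fourier-side identity, and (a) is a bounded-kernel bound combined with a convolution estimate.

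For part (b), I will pass to Fourier variables. Using $\widehat{\Omega u}(k)=i\omega(k)\widehat{u}(k)$, the three terms on the left side of \eqref{16'} collapse to the single integral
\[
i\int \widehat{b}_{j_{1},j_{2}}^{1,0,n}(k,k-m,m)\bigl[-j_{1}\omega(k)-\omega(k-m)+j_{2}\omega(m)\bigr]\widehat{\phi}_{c}(k-m)\widehat{f}(m)\,dm.
\]
By the very definition of $\widehat{b}^{1,0,n}$, the bracketed expression cancels against the denominator of $\widehat{b}^{1,0,n}$, leaving the Fourier transform of $-\frac{P^{1}\partial_x}{2\vartheta}\alpha_{j_1,j_2}^n(\phi_c\,\cdot\,)$ applied to $\vartheta_0 f$, which is the desired right-hand side. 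This is the standard homological-equation calculation and requires only that the kernel be well defined on its support, which is exactly what (a) will verify.

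For part (a), the plan is to show that $\widehat{b}^{1,0,n}$ is uniformly bounded on the portion of $(k,m)$-space that actually contributes. The support constraints are: $\widehat{P}^{1}(k)$ forces $|k|>\delta$, the factor $\widehat{\phi}_c(k-m)$ forces $|k-m\pm k_0|<\delta$, and $\widehat{\mathcal{R}}_{j_2}^{0}$ is supported in $|m|\leq\delta$; together these force $k$ into a bounded neighborhood of $\pm k_0$, so $\widehat{\vartheta}(k)=1$ and $\widehat{P}^1(k)=1$, and the kernel reduces to the second line of \eqref{b10}. Once the kernel is seen to be uniformly $\mathcal O(1)$, Young's inequality in $L^2$ together with $\|\widehat{\phi}_c\|_{L^1}\lesssim 1$ from \eqref{equation12-3} gives $\|B_{j_1,j_2}^{1,0,n}(\phi_c,\mathcal{R}_{j_2}^0)\|_{L^2}\lesssim\|\mathcal{R}_{j_2}^0\|_{L^2}$; and since the output $\widehat{B}^{1,0,n}$ is itself supported in a bounded set, the $H^{s'}$ norm is equivalent to the $L^2$ norm up to a finite constant for any $s'\geq 0$, yielding \eqref{154}.

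\textbf{The main obstacle} is the behavior of the denominator $-j_1\omega(k)-\omega(k-m)+j_2\omega(m)$ on the support described above. For $j_1=+1$, evaluating at the centre $m=0$, $k\approx\pm k_0$ gives $-2\omega(\pm k_0)\neq 0$ and the kernel is manifestly bounded. For $j_1=-1$, however, the denominator vanishes at $m=0$ for both $k\approx k_0$ and $k\approx -k_0$; a Taylor expansion in $m$ yields
\[
-j_1\omega(k)-\omega(k-m)+j_2\omega(m)=\bigl[\omega'(k_0)+j_2\omega'(0)\bigr]m+O(m^2)=\bigl[c_g+j_2\omega'(0)\bigr]m+O(m^2),
\]
with coefficient nonzero by the same non-resonance statement used for $\widetilde A_{0\ell}$ in Section 2. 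The crucial cancellation comes from the numerator factor $\widehat{\vartheta}_0(m)=(1-\epsilon)|m|/\delta$ on $|m|\leq\delta$, which vanishes linearly at $m=0$ and thus exactly compensates the linear zero of the denominator, producing a bounded quotient. This is precisely the reason the substitution leading to $\vartheta_0$ (and not $\vartheta$) multiplying the $\mathcal R^0$ contribution was built into the equations \eqref{equ11-1}--\eqref{R1}. With this cancellation in hand the kernel is uniformly bounded, (a) follows, and (b) is just an exact algebraic identity.
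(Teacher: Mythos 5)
Your proposal is correct and follows essentially the same route as the paper: in both arguments the key point is that on the effective support ($|k|>\delta$, $|k-m\pm k_0|<\delta$, $|m|\le\delta$, so $\widehat{\vartheta}(k)=1$) the resonant denominator has only a simple zero at $m=0$ (equivalently $k=\pm k_0$, occurring for $j_1=-1$) with nonvanishing derivative $c_g+j_2\omega'(0)$, which is exactly offset by the linear vanishing of $\widehat{\vartheta}_0$, after which a uniform kernel bound, Young's inequality with $\|\widehat{\phi}_c\|_{L^1}\lesssim 1$, and the compact support of $\widehat{\mathcal{R}}^0_{j_2}$ give \eqref{154}; the paper merely routes the identical cancellation through the localization Lemma \ref{L6} (replacing $m$ by $k\mp k_0$ in the kernel) before bounding the resulting one-variable kernel, whereas you Taylor-expand the two-variable denominator in $m$ directly. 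Your observation that the exact homological identity in (b) carries the weight $\vartheta_0$ on the second argument is consistent with the kernel \eqref{b10} and with the way the identity is actually applied in the energy computation, so it is a point of care rather than a discrepancy.
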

\begin{proof}
%First, due to the projection operator $P^{1}$, hence the normal form $\epsilon B_{j_{1},j_{2}}^{1,0,n}=\mathcal{O}(\epsilon)$ does not lose $\epsilon$.
By Lemma \ref{L6} and recalling $\phi_{c}=\psi_{1}+\psi_{-1}$, we can rewrite
\begin{equation*}
\begin{split}
\widehat{B}_{j_{1},j_{2}}^{1,0,n}(\phi_{c},\mathcal{R}_{j_{2}}^{0})
=&\widehat{B}_{j_{1},j_{2}}^{1,0,n}(\psi_{1},\mathcal{R}_{j_{2}}^{0})+\widehat{B}_{j_{1},j_{2}}^{1,0,n}(\psi_{-1},\mathcal{R}_{j_{2}}^{0})\\
=&\int_{\mathbb{R}}\widehat{b}_{j_{1},j_{2}}^{1,0,n,+}(k)\widehat{\psi}_{1}(k-m)\widehat{\mathcal{R}}_{j_{2}}^{0}(m)dm\\
&+\int_{\mathbb{R}}\widehat{b}_{j_{1},j_{2}}^{1,0,n,-}(k)\widehat{\psi}_{-1}(k-m)\widehat{\mathcal{R}}_{j_{2}}^{0}(m)dm +\epsilon^{2}\mathcal{F}^{8},
\end{split}
\end{equation*}
where
\begin{equation}
\begin{split}\label{155}
\widehat{b}_{j_{1},j_{2}}^{1,0,n,+}(k)
=\frac{-\frac{1}{2}k\widehat{P}^{1}(k)\widehat{\alpha}_{j_{1},j_{2}}^{n}(k,k_{0},k-k_{0})}{-j_{1}\omega(k)-\omega(k_{0})+j_{2}\omega(k-k_{0})}
\frac{\widehat{\vartheta}_{0}(k-k_{0})}{\widehat{\vartheta}(k)},
\end{split}
\end{equation}
with a similar expression for $\widehat{b}_{j_{1},j_{2}}^{1,0,n,-}$ and the notation $\epsilon^{2}\mathcal{F}^{8}$ has the same property with $\epsilon^{2}\mathcal{F}^{1}$.
Due to the support properties of $\widehat{\phi}_{c}$ and the definitions of $\widehat{P}^{1}$ and $\widehat{\vartheta}(k)=1$, the expression \eqref{155} only has to be analysed for $|k-k_{0}|<\delta$ and $|k|>\delta$. So there is only a resonance at $k=k_{0}$ with $j_{1}=-1$ in the denominator of \eqref{155}, since $\widehat{P}^{1}(k)=0$ for $|k|<\delta$, while the resonance at $k=0$ does not play an essential role in the analysis of $B_{j_{1},j_{2}}^{1,0,n}$. However,  since the derivative of $\omega$ at $k_{0}$ is $\mathcal{O}(1)$, we have a bound on the denominator of the form
\begin{equation}
\begin{split}\label{156}
|-j_{1}\omega(k)-\omega(k_{0})+j_{2}\omega(k-k_{0})|\geq C|k-k_{0}|.
\end{split}
\end{equation}
This singularity is offset thanks to $|\widehat{\vartheta}_{0}(k-k_{0})|\leq C|k-k_{0}|$ and hence the kernel $\widehat{b}_{j_{1},j_{2}}^{1,0,n,+}$ can be extended continuously at $k=k_{0}$ with an $\mathcal{O}(1)$ bound on its size. Thus the kernel can be bounded by an $\mathcal{O}(1)$ bound for all values of $k$ and $m$.
Applying Young's inequality, due to the compact support of $\widehat{\mathcal{R}}^{0}$, the loss of regularity is not present in the estimate for $B_{j_{1},j_{2}}^{1,0,n}$ and we have the estimate \eqref{154}. (b) is a direct consequence of the construction of $B_{j_{1},j_{2}}^{1,0,n}$.
\end{proof}

Particularly, according to the form of $\widehat{\alpha}_{j_{1},j_{2}}^{5}(k,k-m,m)$ in the equation \eqref{equ1,-1}, we have
\begin{equation}
\begin{split}\label{b1,15}
b_{j_{1},j_{2}}^{1,1,5}=&\frac{j_{1}k\widehat{P}^{1}(k)}{-j_{1}\omega(k)-\omega(k-m)+j_{2}\omega(m)}\frac{1}{\langle k\rangle^{2}\langle k-m\rangle^{2}\langle m\rangle^{2}}
\frac{\widehat{\vartheta}(m)}{2\widehat{\vartheta}(k)}\\
=&\frac{j_{1}k}{-j_{1}\omega(k)-\omega(k-m)+j_{2}\omega(m)}\frac{1}{\langle k\rangle^{2}\langle k-m\rangle^{2}\langle m\rangle^{2}},
\end{split}
\end{equation}
for $|k|>\delta$, $|m|>\delta$ and $|k-m\pm k_{0}|<\delta$.
Thus we have the following estimates
\begin{equation}
\begin{split}\label{B1,15}
\epsilon\|B_{j_{1},j_{2}}^{1,1,5}(\phi_{c},R_{j_{2}}^{1})\|_{H^{s+3}}\lesssim\epsilon\|R_{j_{2}}^{1}\|_{H^{s}},
\end{split}
\end{equation}
provided with $R_{j_{2}}^{1}\in H^{s}$ for $s\geq6$ and $j_{1}, \ j_{2}\in\{\pm1\}$.

%There are no resonances for the normal form $B_{j_{1},j_{2}}^{1,1,n}(\phi_{c},R_{j_{2}}^{1})$.
However, there is a loss of one derivative due to the growth of $k\widehat{\alpha}_{j_{1},j_{2}}^{n}\sim k$ as $|k|\rightarrow\infty$ for $1\leq n\leq4$ and hence, we can only bound $B_{j_{1},j_{2}}^{1,1,n}$ in $H^{s-1}$ rather than $H^{s}$ if $R_{j_{2}}^{1}\in H^{s}$ for $1\leq n\leq4$. In the following we analyse carefully the construction of the normal form transformation $B_{j_{1},j_{2}}^{1,1,n}$ for $1\leq n\leq4$.
\begin{lemma}\label{L8}
The operators $B_{j_{1},j_{2}}^{1,1,n}$ for $1\leq n\leq4$ have the following properties:\\
(a) Fix $h\in L^{2}(\mathbb{R},\mathbb{R})$, then the mapping $f\rightarrow B_{j,j}^{1,1,n}(h,f)$ defines a continuous linear map from $H^{1}(\mathbb{R},\mathbb{R})$ into $L^{2}(\mathbb{R},\mathbb{R})$ and $f\rightarrow B_{j,-j}^{1,1,n}(h,f)$ defines a continuous linear map from $L^{2}(\mathbb{R},\mathbb{R})$ into $L^{2}(\mathbb{R},\mathbb{R})$. In particular, for all $f\in H^{1}(\mathbb{R},\mathbb{R})$ we have
\begin{equation}
\begin{split}\label{15}
&B_{j,j}^{1,1,1}(h,f)=-j\partial_{x}(G_{j,j}h \ qf)+Q_{j,j}^{1}(h,f),\\
&B_{j,-j}^{1,1,1}(h,f)=-G_{j,-j}h \ qf+Q_{j,-j}^{1}(h,f),
\end{split}
\end{equation}
\begin{equation}
\begin{split}\label{151}
&B_{j,j}^{1,1,2}(h,f)=-\partial_{x}(G_{j,j}qh \ f)+Q_{j,j}^{2}(h,f),\\
&B_{j,-j}^{1,1,2}(h,f)=jG_{j,-j}qh \ f+Q_{j,-j}^{2}(h,f),
\end{split}
\end{equation}
\begin{equation}
\begin{split}\label{152}
&B_{j,j}^{1,1,3}(h,f)=-\partial_{x}(G_{j,j}qh \ qf)+Q_{j,j}^{3}(h,f),\\
&B_{j,-j}^{1,1,3}(h,f)=-jG_{j,-j}qh \ qf+Q_{j,-j}^{3}(h,f),
\end{split}
\end{equation}
and
\begin{equation}
\begin{split}\label{153}
&B_{j,j}^{1,1,4}(h,f)=j\partial_{x}(G_{j,j}h \ f)+Q_{j,j}^{4}(h,f),\\
&B_{j,-j}^{1,1,4}(h,f)=-G_{j,-j}h \ f+Q_{j,-j}^{4}(h,f),
\end{split}
\end{equation}
where
\begin{equation*}
\begin{split}
&\widehat{G_{j,j}h}(k)=\frac{\chi(k)}{-2i(jk+\omega(k))}\widehat{h}(k),\\
&\widehat{G_{j,-j}h}(k)=\frac{1}{2}\chi(k)\widehat{h}(k),\\
&\|Q_{j\pm j}^{n}(h,f)\|_{H^{1}}=\mathcal{O}(\|h\|_{L^{2}},\|f\|_{L^{2}}), \ n=1,2,3,4.
\end{split}
\end{equation*}
(b) For all $f\in H^{1}(\mathbb{R},\mathbb{R})$ we have
\begin{equation}
\begin{split}\label{16}
-j_{1}\Omega B_{j_{1},j_{2}}^{1,1,n}(\phi_{c},f)-B_{j_{1},j_{2}}^{1,1,n}(\Omega\phi_{c},f)
+j_{2} B_{j_{1},j_{2}}^{1,1,n}(\phi_{c},\Omega f)
=-\frac{P^{1}\partial_{x}}{2\vartheta}\alpha_{j_{1},j_{2}}^{n}(\phi_{c}f),
\end{split}
\end{equation}
where $n=1,2,3,4,5$, $\Omega$ is defined by $\widehat{\Omega u}(k)=i\omega(k)\widehat{u}(k)$ and the operator $\omega$ is defined by the symbols \eqref{equation3}.\\
(c) For all $f,g,h\in H^{1}(\mathbb{R},\mathbb{R})$, we have
\begin{equation}
\begin{split}\label{17}
\int_{\mathbb{R}}fB_{j_{1},j_{2}}^{1,1,\{1,4\}}(h,g)=-\frac{j_{1}}{j_{2}}\int_{\mathbb{R}}B_{j_{2},j_{1}}^{1,1,\{1,4\}}(h,f)gdx
+\int_{\mathbb{R}}S_{j_{2},j_{1}}^{\{1,4\}}(\partial_{x}h,f)gdx,
\end{split}
\end{equation}

\begin{equation}
\begin{split}\label{18}
\int_{\mathbb{R}}fB_{j_{1},j_{2}}^{1,1,\{2,3\}}(h,g)=-\int_{\mathbb{R}}B_{j_{2},j_{1}}^{1,1,\{2,3\}}(h,f)gdx
+\int_{\mathbb{R}}S_{j_{2},j_{1}}^{\{2,3\}}(\partial_{x}h,f)gdx,
\end{split}
\end{equation}
where
\begin{equation*}
\begin{split}
\widehat{S}_{j_{2},j_{1}}^{n}(\partial_{x}h,f)g(k)=\int_{\mathbb{R}}\widehat{s}_{j_{2},j_{1}}^{n}(k,k-m,m)\widehat{\partial_{x}h}(k-m)\widehat{f}(m)dm,
\end{split}
\end{equation*}
with
\begin{equation*}
\begin{split}
&\widehat{s}_{j_{2},j_{1}}^{1}(k,k-m,m)=\frac{\mp j_{1}(kq(m)-mq(k))}{2(k-m)i(-j_{2}\omega(k)-\omega(k-m)+j_{1}\omega(m))},\\
&\widehat{s}_{j_{2},j_{1}}^{2}(k,k-m,m)=\frac{-q(k-m)}{2i(-j_{2}\omega(k)-\omega(k-m)+j_{1}\omega(m))},\\
&\widehat{s}_{j_{2},j_{1}}^{3}(k,k-m,m)=\frac{\mp (kq(m)-mq(k))q(k-m)}{2(k-m)i(-j_{2}\omega(k)-\omega(k-m)+j_{1}\omega(m))},\\
&\widehat{s}_{j_{2},j_{1}}^{4}(k,k-m,m)=\frac{j_{1}}{2i(-j_{2}\omega(k)-\omega(k-m)+j_{1}\omega(m))},
\end{split}
\end{equation*}
where $\mp$ means that we take $-$ sign when $j_{1}$ and $j_{2}$ with the same sign and $+$ sign when $j_{1}$ and $j_{2}$ with the opposite sign.
In particular, we have
\begin{equation}
\begin{split}\label{19}
\widehat{S}_{j,j}^{1}(\partial_{x}h,f)=-jG_{j,j}\partial_{x}h \ qf+\widetilde{Q}_{j,j}^{1}(\partial_{x}h,f),
\end{split}
\end{equation}
\begin{equation}
\begin{split}\label{191}
\widehat{S}_{j,j}^{2}(\partial_{x}h,f)=-G_{j,j}\partial_{x}qh \ f+\widetilde{Q}_{j,j}^{2}(\partial_{x}h,f),
\end{split}
\end{equation}
\begin{equation}
\begin{split}\label{192}
\widehat{S}_{j,j}^{3}(\partial_{x}h,f)=-G_{j,j}\partial_{x}qh \ qf+\widetilde{Q}_{j,j}^{3}(\partial_{x}h,f),
\end{split}
\end{equation}
\begin{equation}
\begin{split}\label{193}
\widehat{S}_{j,j}^{4}(\partial_{x}h,f)=jG_{j,j}\partial_{x}h \ f+\widetilde{Q}_{j,j}^{4}(\partial_{x}h,f),
\end{split}
\end{equation}
with
\begin{equation*}
\begin{split}
\big\|\widetilde{Q}_{j,j}^{n}(\partial_{x}h,f)\big\|_{H^{2}}=\mathcal{O}\big(\|h\|_{L^{2}},\|f\|_{L^{2}}\big).
\end{split}
\end{equation*}
\end{lemma}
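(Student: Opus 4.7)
The plan is to carry out a direct symbol analysis of the kernels $\widehat{b}^{1,1,n}_{j_1,j_2}(k,k-m,m)$ defined in \eqref{b1,1}, exploiting the fact that $\widehat{\phi}_c$ localizes $k-m$ near $\pm k_0$ (so $k-m=\mathcal O(1)$) while $|k|,|m|>\delta$ can be large. The crucial input is the asymptotic expansion of the symbol of $\Omega$: since $\omega(k)=k\widehat q(k)$ with $\widehat q(k)=1+\mathcal O(|k|^{-2})$, we have $\omega(k)=k+\mathcal O(|k|^{-1})$ and $\omega'(k)\to 1$ as $|k|\to\infty$. Writing $m=k-(k-m)$ and Taylor expanding,
\begin{equation*}
-j_1\omega(k)-\omega(k-m)+j_2\omega(m)=(j_2-j_1)\omega(k)-j_2\omega'(k)(k-m)-\omega(k-m)+\mathcal O(|k|^{-1}).
\end{equation*}

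For part (a) I would split into the two cases $j_1=j_2$ and $j_1=-j_2$. When $j_1=j_2=j$ the leading part of the denominator reduces to $-j(k-m)-\omega(k-m)$, which is a bounded function of $k-m$ only; the $k$ in the numerator $-\tfrac12 k\widehat\alpha^n$ thus generates the loss of one derivative. Identifying the asymptotic value of $\widehat\alpha^n$ as a product of factors $\widehat q(\cdot)\to 1$ at the large arguments and $\widehat q(m)$ or $\widehat q(k-m)$ at the bounded one, the leading symbol factors exactly as $(ik)\cdot\widehat G_{j,j}(k-m)\cdot\widehat{\mathrm{mult}}(m)$, producing the claimed terms $\mp j\partial_x(G_{j,j}(qh)\cdot qf)$ etc. The remainder terms $Q^n_{j,j}$ arise from (i) replacing $\omega'(k)$ by $1$, (ii) replacing $\widehat q$ at large argument by $1$, and (iii) the $\mathcal O(|k|^{-1})$ corrections; each such replacement gains a factor of $\langle k\rangle^{-1}$ in the symbol, and together with Lemma \ref{L6} this yields the $H^1$ bound claimed. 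For $j_1=-j_2$ the $(j_2-j_1)\omega(k)\sim\pm 2k$ term dominates the denominator, fully canceling the $k$ in the numerator, so the leading kernel is an $L^\infty$ multiplier (the $G_{j,-j}h$ term) without any $\partial_x$ in front.

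Part (b) is the defining identity for the normal form kernel: computing the Fourier transform of the LHS gives $\int[-ij_1\omega(k)-i\omega(k-m)+ij_2\omega(m)]\widehat b^{1,1,n}_{j_1,j_2}(k,k-m,m)\widehat\phi_c(k-m)\widehat f(m)dm$, and substituting the explicit form of $\widehat b^{1,1,n}$ from \eqref{b1,1} collapses the bracket against the denominator, producing exactly $-P^1\partial_x\alpha^n_{j_1,j_2}(\phi_c f)/(2\vartheta)$. For part (c) I would use Plancherel together with the change of variables $(k,m)\mapsto(-m,-k)$ in the iterated Fourier integral for $\int fB^{1,1,n}_{j_1,j_2}(h,g)dx$ (using that $f,g,h$ are real so $\overline{\widehat f(k)}=\widehat f(-k)$); the oddness of $\omega$ and the form of $\widehat\alpha^n$ convert the transposed kernel into $\pm(j_1/j_2)\widehat b^{1,1,n}_{j_2,j_1}(k,k-m,m)$ plus a discrepancy, and factoring the discrepancy gives a factor of $(k-m)$ that is absorbed as $\partial_x h$, producing the symbols $\widehat s^n_{j_2,j_1}$ stated in the lemma. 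The further decompositions \eqref{19}--\eqref{193} follow by repeating the same asymptotic analysis used in (a) on these new kernels, extracting the $G_{j,j}\partial_x(\cdot)$ leading term and placing the $\mathcal O(|k|^{-2})$ corrections into $\widetilde Q^n_{j,j}\in H^2$.

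The principal obstacle will be part (a): one must verify rigorously that the several successive approximations (replacing $\omega'(k)$ by $1$, replacing $\widehat q$ at large arguments by $1$, discarding $\mathcal O(|k|^{-1})$ denominator corrections, and applying Lemma \ref{L6} to freeze the variable $k-m$ at $\pm k_0$) each gain a factor $\langle k\rangle^{-1}$ uniformly on the relevant support, so that the accumulated remainder $Q^n_{j_1,j_2}$ truly lies in $H^1$ rather than merely in $L^2$. Parts (b) and (c) are essentially algebraic once the kernel formula is in hand, so they are routine once the bookkeeping in (a) has been pinned down.
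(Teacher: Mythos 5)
Your proposal matches the paper's own proof in all essentials: the same asymptotic symbol analysis of $\widehat{b}^{1,1,n}_{j_1,j_2}$ using $\omega(k)=k+\mathcal{O}(|k|^{-1})$, $\omega'(k)=1+\mathcal{O}(|k|^{-2})$ and the boundedness of $k-m$ on $\mathrm{supp}\,\widehat{\phi}_c$ (the paper implements the Taylor step via the mean value theorem), the same case split $j_1=j_2$ versus $j_1=-j_2$ for extracting the leading terms $G_{j,j}$ and $G_{j,-j}$, part (b) read off directly from the construction of the kernel, and part (c) obtained by Plancherel, the change of variables $(k,m)\mapsto(-m,-k)$, and absorbing the discrepancy $k\widehat q(m)-m\widehat q(k)$ as a factor of $(k-m)$ attached to $\partial_x h$. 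The only cosmetic difference is that you invoke Lemma \ref{L6} to control the remainders in (a), whereas the paper relies solely on the $\mathcal{O}(|k|^{-1})$ decay of the kernel corrections together with the reality and parity of the kernels.
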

\begin{proof}
(a). Because of the support properties of $\widehat{\phi}_{c}$ and $\widehat{P}^{1}$, the kernel $b^{1,1,n}_{j_{1},j_{2}}$ \eqref{b1,1} of the normal form transformation $B^{1,1,n}_{j_{1},j_{2}}$ only has to be analysed for $|k-m\pm k_{0}|<\delta$, $|k|>\delta$ and $|m|>\delta$. We have
\begin{equation*}
\begin{split}
|-j_{1}\omega(k)-\omega(k-m)+j_{2}\omega(m)|\geq C>0,
\end{split}
\end{equation*}
with a constant $C$, which implies $|\widehat{b}_{j_{1},j_{2}}^{1,1,n}(k,k-m,m)|<\infty$ for $1\leq n\leq5$.
Next, we analyze the asymptotic behavior of the $|\widehat{b}_{j_{1},j_{2}}^{1,1,n}(k,k-m,m)|<\infty$ for $|k|\rightarrow\infty$. We have
\begin{equation}
\begin{split}\label{omega}
\omega(k)=kq(k)=k+\mathcal{O}(|k|^{-1}), \ \text{for} \  |k|\rightarrow\infty,
\end{split}
\end{equation}
and
\begin{equation}
\begin{split}\label{omega'}
\omega'(k)=1+\mathcal{O}(|k|^{-2}), \ \text{for} \  |k|\rightarrow\infty.
\end{split}
\end{equation}
%\begin{equation}
%\begin{split}\label{alpha}
%\widehat{\alpha}^{n}_{j_{1},j_{2}}(k,k-m,m)=k+\mathcal{O}(|k|^{-1}), \ for \  |k|\rightarrow\infty, |m|\rightarrow\infty
%\end{split}
%\end{equation}
By the mean value theorem we get
\begin{equation*}
\begin{split}
\widehat{b}_{j,j}^{1,1,1}(k,k-m,m)=&\frac{jkq(m)\chi(k-m)}{2(j(\omega(k)-\omega(m))+\omega(k-m))}\\
=&\frac{jkq(m)\chi(k-m)}{2(j(k-m)\omega'(k-\theta(k-m))+\omega(k-m))},
\end{split}
\end{equation*}
for some $\theta\in[0,1]$. Using again the fact that \text{supp}$\chi$ is compact, we conclude with the help of the expressions \eqref{omega'} and $\widehat{\alpha}^{1}_{j_{1},j_{2}}(k,k-m,m)$ that
\begin{equation*}
\begin{split}
\widehat{b}_{j,j}^{1,1,1}(k,k-m,m)
=&\frac{jkq(m)\chi(k-m)}{2\big(j(k-m)(1+\mathcal{O}(|k|^{-2}))+\omega(k-m)\big)}\\
=&\frac{jkq(m)\chi(k-m)}{2\big(j(k-m)+\omega(k-m)\big)}+\mathcal{O}(|k|^{-1}),  \ \ \text{for} \  |k|\rightarrow\infty.
\end{split}
\end{equation*}
Exploiting once more the compactness of \text{supp}$\chi$ as well as \eqref{omega}, \eqref{omega'} and $\widehat{\alpha}^{1}_{j_{1},j_{2}}(k,k-m,m)$ yields
\begin{equation*}
\begin{split}
\widehat{b}_{j,-j}^{1,1,1}(k,k-m,m)=&\frac{-jkq(m)\chi(k-m)}{2\big(j(\omega(k)+\omega(m))+\omega(k-m)\big)}\\
=&\frac{-jkq(m)\chi(k-m)}{2j\omega(k)\big(1+\mathcal{O}(|k|^{-1})\big)}\\
=&\frac{-q(m)}{2}+\mathcal{O}(|k|^{-1}),
\end{split}
\end{equation*}
for $|k|\rightarrow\infty$. Similarly, when $|k|\to\infty$ we have
\begin{equation*}
\begin{split}
&\widehat{b}_{j,j}^{1,1,2}(k,k-m,m)
=\frac{kq(k-m)\chi(k-m)}{2\big(j(k-m)+\omega(k-m)\big)}+\mathcal{O}(|k|^{-1}),\\
&\widehat{b}_{j,-j}^{1,1,2}(k,k-m,m)
=j\frac{q(k-m)}{2}+\mathcal{O}(|k|^{-1}),\\
&\widehat{b}_{j,j}^{1,1,3}(k,k-m,m)
=\frac{kq(k-m)q(m)\chi(k-m)}{2\big(j(k-m)+\omega(k-m)\big)}+\mathcal{O}(|k|^{-1}),\\
&\widehat{b}_{j,-j}^{1,1,3}(k,k-m,m)
=\frac{-jq(k-m)q(m)}{2}+\mathcal{O}(|k|^{-1}),\\
&\widehat{b}_{j,j}^{1,1,4}(k,k-m,m)
=\frac{-jk\chi(k-m)}{2\big(j(k-m)+\omega(k-m)\big)}+\mathcal{O}(|k|^{-1}),\\
&\widehat{b}_{j,-j}^{1,1,4}(k,k-m,m)
=-\frac{1}{2}+\mathcal{O}(|k|^{-1}).
\end{split}
\end{equation*}
These asymptotic expansions of the $\widehat{b}_{j_{1},j_{2}}^{1,1,n}(k,k-m,m)$ imply \eqref{15}-\eqref{153}.

Finally, since
\begin{equation*}
\begin{split}
\widehat{b}_{j_{1},j_{2}}^{1,1,n}(-k,-(k-m),-m)=\widehat{b}_{j_{1},j_{2}}^{1,1,n}(k,k-m,m)\in\mathbb{R},
\end{split}
\end{equation*}
and $\phi_{c}$ is real-valued, all assertions of (a) follow.

(b) is a direct consequence of the construction of the operators $B_{j_{1},j_{2}}^{1,1,n}$.

In order to prove (c), we compute for all $f, g, h\in H^{1}(\mathbb{R},\mathbb{R})$ that
\begin{equation*}
\begin{split}
(f,B_{j_{1},j_{2}}^{1,1,1}(h,g)) =&\int_{\mathbb{R}}\overline{\widehat{f}(k)}\widehat{B}_{j_{1},j_{2}}^{1,1,1}(h,g)(k)dk\\
=&\int_{\mathbb{R}}\int_{\mathbb{R}}\overline{\widehat{f}(k)}\frac{\mp\frac{j_{1}}{2}k\widehat{q}(m)}
{-j_{1}\omega(k)-\omega(k-m)+j_{2}\omega(m)}\widehat{h}(k-m)\widehat{g}(m)dmdk\\
=&\int_{\mathbb{R}}\int_{\mathbb{R}}\overline{\widehat{g}(-m)}\frac{\mp\frac{j_{1}}{2}k\widehat{q}(m)}
{-j_{1}\omega(k)-\omega(k-m)+j_{2}\omega(m)}\widehat{h}(k-m)\widehat{f}(-k)dkdm\\
=&\int_{\mathbb{R}}\int_{\mathbb{R}}\overline{\widehat{g}(k)}\frac{\pm\frac{j_{1}}{2}m\widehat{q}(k)}
{-j_{2}\omega(k)-\omega(k-m)+j_{1}\omega(m)}\widehat{h}(k-m)\widehat{f}(m)dmdk\\
=&\int_{\mathbb{R}}\int_{\mathbb{R}}\overline{\widehat{g}(k)}\frac{\pm\frac{j_{1}}{2}k\widehat{q}(m)}
{-j_{2}\omega(k)-\omega(k-m)+j_{1}\omega(m)}\widehat{h}(k-m)\widehat{f}(m)dmdk\\
&+\int_{\mathbb{R}}\int_{\mathbb{R}}\overline{\widehat{g}(k)}\frac{\mp\frac{j_{1}}{2}(k\widehat{q}(m)
-m\widehat{q}(k))}
{-j_{2}\omega(k)-\omega(k-m)+j_{1}\omega(m)}\widehat{h}(k-m)\widehat{f}(m)dmdk\\
=&-\frac{j_{1}}{j_{2}}\int_{\mathbb{R}}\overline{\widehat{g}(k)}\widehat{B}_{j_{2},j_{1}}^{1,1,1}(h,f)(k)dk
+\int_{\mathbb{R}}\overline{\widehat{g}(k)}\widehat{S}_{j_{2},j_{1}}^{1}(\partial_{x}h,f)(k)dk\\
=&-\frac{j_{1}}{j_{2}}\big(g,B_{j_{2},j_{1}}^{1,1,1}(h,f)\big)+\big(g,S_{j_{2},j_{1}}^{1}(h,f)\big).
\end{split}
\end{equation*}
Similarly, we have
\begin{equation*}
\begin{split}
\big(f,B_{j_{1},j_{2}}^{1,1,2}(h,g)\big)
=-\big(g,B_{j_{2},j_{1}}^{1,1,2}(h,f)\big)+\big(g,S_{j_{2},j_{1}}^{2}(h,f)\big),
\end{split}
\end{equation*}
\begin{equation*}
\begin{split}
\big(f,B_{j_{1},j_{2}}^{1,1,3}(h,g)\big)
=-\big(g,B_{j_{2},j_{1}}^{1,1,3}(h,f)\big)+\big(g,S_{j_{2},j_{1}}^{3}(h,f)\big),
\end{split}
\end{equation*}
and
\begin{equation*}
\begin{split}
\big(f,B_{j_{1},j_{2}}^{1,1,4}(h,g)\big)
=-\frac{j_{1}}{j_{2}}\big(g,B_{j_{2},j_{1}}^{1,1,4}(h,f)\big)+\big(g,S_{j_{2},j_{1}}^{4}(h,f)\big),
\end{split}
\end{equation*}
yielding \eqref{17} and \eqref{18}. Thanks to \eqref{15}-\eqref{153}, we obtain \eqref{19}-\eqref{193}.
\end{proof}

The transformation \eqref{equation92}, assertions of Lemma \ref{L10}, Lemma \ref{L8} (a), (c) and the Cauchy-Schwarz inequality imply
\begin{corollary}\label{C1}
$\sqrt{\mathcal{E}_{s}}$ is equivalent to $\|R^{0}\|_{H^{s}}+\|R^{1}\|_{H^{s}}$ for sufficiently small $\epsilon>0$.
\end{corollary}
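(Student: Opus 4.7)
The plan is to establish the equivalence in two stages: first show that $\sqrt{\mathcal{E}_s}\sim\|\mathcal{R}^0\|_{H^s}+\|R^1\|_{H^s}$, then transfer this equivalence to $\|R^0\|_{H^s}+\|R^1\|_{H^s}$ via the composite normal form \eqref{equation92}. The first stage is the technical heart and is where the derivative loss of $B^{1,1,n}$ must be neutralised; the second stage is essentially bookkeeping since the transformation $\mathcal R^0=R^0+\epsilon F(R^1)$ has an inverse that is bounded in $H^s$ uniformly in $\epsilon$.

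For the first stage, I would write $E_\ell$ as its principal part $\tfrac12(\|\partial_x^\ell\mathcal R^0\|_{L^2}^2+\|\partial_x^\ell R^1\|_{L^2}^2)$ plus $\epsilon$-corrections $\epsilon(I_\ell^{(0)}+I_\ell^{(1)})$ coming from the $B^{1,0,n}$ and $B^{1,1,n}$ cross terms respectively, and aim to show $|I_\ell^{(0)}|+|I_\ell^{(1)}|\le C(\|\mathcal R^0\|_{H^s}^2+\|R^1\|_{H^s}^2)$ so that absorbing into the principal part for $\epsilon$ small yields the desired equivalence. For $I_\ell^{(0)}$ the bound is immediate: Cauchy--Schwarz together with the estimate \eqref{154} of Lemma \ref{L10} gives $|I_\ell^{(0)}|\lesssim\|R^1\|_{H^s}\|\mathcal R^0\|_{H^s}$. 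The main obstacle is $I_\ell^{(1)}$, since $B^{1,1,n}$ loses one derivative and a naive Cauchy--Schwarz would require control of $\|R^1\|_{H^{\ell+1}}$. To avoid this I would first commute $\partial_x^\ell$ past $B^{1,1,n}(\phi_c,\cdot)$; each commutator term carries at least one derivative on the smooth, Fourier-compactly-supported function $\phi_c$ and is therefore harmlessly bounded by $\|R^1\|_{H^\ell}$. For the leading piece $B^{1,1,n}(\phi_c,\partial_x^\ell R^1)$, the off-diagonal index $j_2=-j_1$ does not lose any derivative by Lemma \ref{L8}(a) and is handled directly by Cauchy--Schwarz. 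For the diagonal index $j_2=j_1$, which is the source of the derivative loss, I would apply the antisymmetrisation identities \eqref{17}--\eqref{18} of Lemma \ref{L8}(c) with $f=g=\partial_x^\ell R^1_{j_1}$ and $h=\phi_c$, which collapses the diagonal contribution to
\begin{equation*}
2\int\partial_x^\ell R^1_{j_1}\,B^{1,1,n}_{j_1,j_1}(\phi_c,\partial_x^\ell R^1_{j_1})\,dx = \int S^n_{j_1,j_1}(\partial_x\phi_c,\partial_x^\ell R^1_{j_1})\,\partial_x^\ell R^1_{j_1}\,dx.
\end{equation*}
The right-hand side no longer loses regularity: by \eqref{19}--\eqref{193}, $S^n_{j_1,j_1}(\partial_x\phi_c,\cdot)$ decomposes as a principal term of the form $G_{j_1,j_1}\partial_x\phi_c$ times a bounded Fourier multiplier applied to $\partial_x^\ell R^1_{j_1}$, plus a remainder lying in $H^2$, both of which are $L^2$-bounded by a constant times $\|R^1\|_{H^\ell}$. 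Summing over $\ell\le s$ yields $|I_\ell^{(1)}|\lesssim\|R^1\|_{H^s}^2$, and Young's inequality then produces $|\epsilon(I_\ell^{(0)}+I_\ell^{(1)})|\le C\epsilon(\|\mathcal R^0\|_{H^s}^2+\|R^1\|_{H^s}^2)$ as required.

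For the second stage, the composite transformation \eqref{equation92} gives $\mathcal R^0=R^0+\epsilon F(R^1)$ with $F(R^1)$ Fourier-supported in $|k|\le\delta$, on which $\widehat\vartheta(k)\ge\epsilon$. Hence the bound $\|\epsilon\vartheta F(R^1)\|_{H^{s'}}\lesssim\|\epsilon R^1\|_{H^s}$ recorded there implies $\|\epsilon F(R^1)\|_{H^s}\le\|\vartheta F(R^1)\|_{H^s}\lesssim\|R^1\|_{H^s}$ with a constant independent of $\epsilon$, and the triangle inequality delivers
\begin{equation*}
\|\mathcal R^0\|_{H^s}+\|R^1\|_{H^s}\sim\|R^0\|_{H^s}+\|R^1\|_{H^s}.
\end{equation*}
Combining the two stages establishes the claimed equivalence $\sqrt{\mathcal E_s}\sim\|R^0\|_{H^s}+\|R^1\|_{H^s}$ for all sufficiently small $\epsilon>0$.
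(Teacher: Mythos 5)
Your proposal is correct and follows essentially the same route as the paper, whose proof of Corollary \ref{C1} consists precisely of invoking the transformation \eqref{equation92}, Lemma \ref{L10}, Lemma \ref{L8}(a),(c) and the Cauchy--Schwarz inequality; you have simply written out the details, in particular the antisymmetrisation via \eqref{17}--\eqref{18} that converts the derivative-losing diagonal terms $\int f\,B^{1,1,n}_{j,j}(\phi_c,f)\,dx$ into the $L^2$-bounded expressions $\tfrac12\int S^{n}_{j,j}(\partial_x\phi_c,f)\,f\,dx$, which is exactly the mechanism the paper relies on.
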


Since the right-hand side of the error equation \eqref{R1} for $j_{1}\in\{\pm1\}$ loses one derivative, we will need the following identities to control the time evolution of $\mathcal{E}_{s}$. See also \cite{D1}.
\begin{lemma}\label{L9}
Let $j\in\{\pm1\}$, $a_{j}\in H^{2}(\mathbb{R},\mathbb{R})$, and $f_{j}\in H^{1}(\mathbb{R},\mathbb{R})$. Then we have
\begin{equation}
\begin{split}\label{part1}
&\int_{\mathbb{R}}a_{j}f_{j}\partial_{x}f_{j}dx=-\frac{1}{2}\int_{\mathbb{R}}\partial_{x}a_{j}f_{j}^{2}dx,
\end{split}
\end{equation}
\begin{equation}
\begin{split}\label{part2}
\sum _{j\in\{\pm1\}}&\int_{\mathbb{R}}a_{j}f_{j}\partial_{x}f_{-j}dx =\frac{1}{2}\int_{\mathbb{R}}(a_{-1}-a_{1})(f_{1}+f_{-1})\partial_{x}(f_{1}-f_{-1})dx\\
&+\mathcal{O}\Big(\big(\|a_{1}\|_{H^{2}(\mathbb{R},\mathbb{R})}+\|a_{-1}\|_{H^{2}(\mathbb{R},\mathbb{R})}\big)
\big(\|f_{1}\|_{L^{2}(\mathbb{R},\mathbb{R})}+\|f_{-1}\|_{L^{2}(\mathbb{R},\mathbb{R})})\Big),
\end{split}
\end{equation}
\begin{equation}
\begin{split}\label{part3}
\sum _{j\in\{\pm1\}}j&\int_{\mathbb{R}}a_{j}f_{j}\partial_{x}f_{-j}dx =\frac{1}{2}\int_{\mathbb{R}}(a_{1}+a_{-1})(f_{1}+f_{-1})\partial_{x}(f_{-1}-f_{1})dx\\
&+\mathcal{O}\Big(\big(\|a_{1}\|_{H^{2}(\mathbb{R},\mathbb{R})}+\|a_{-1}\|_{H^{2}(\mathbb{R},\mathbb{R})}\big)
\big(\|f_{1}\|_{L^{2}(\mathbb{R},\mathbb{R})}+\|f_{-1}\|_{L^{2}(\mathbb{R},\mathbb{R})}\big)\Big),
\end{split}
\end{equation}
\begin{equation}
\begin{split}\label{part4}
(\widehat{G}_{-1,-1}+\widehat{G}_{1,1})(k)=(\frac{1}{-ik}+ik)q(k),
\end{split}
\end{equation}
and
\begin{equation}
\begin{split}\label{part5}
(\widehat{G}_{-1,-1}-\widehat{G}_{1,1})(k)=(\frac{1}{-ik}+ik).
\end{split}
\end{equation}
\end{lemma}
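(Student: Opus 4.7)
The plan is to split Lemma \ref{L9} into three blocks: the pointwise integration-by-parts identity \eqref{part1}; the two ``cross-product'' identities \eqref{part2}--\eqref{part3}; and the two explicit symbol computations \eqref{part4}--\eqref{part5}. The first block is immediate, and the last block is a direct calculation from the explicit formula $\widehat{G}_{j,j}(k)=\chi(k)/(-2i(jk+\omega(k)))$ combined with the dispersion relation $\omega(k)=kq(k)$ and the algebraic identity $q(k)^2-1=1/(1+k^2)$. So the only part that involves any bookkeeping is the middle block, which I treat by introducing symmetric and antisymmetric decompositions.

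For \eqref{part1}, I simply write $a_jf_j\partial_xf_j=\tfrac12 a_j\partial_x(f_j^2)$ and integrate by parts; this requires only $a_j\in H^1$, so $a_j\in H^2$ is more than enough. For \eqref{part2} and \eqref{part3} I would set
\[
A_{\pm}=\tfrac12(a_1\pm a_{-1}),\qquad F_{\pm}=\tfrac12(f_1\pm f_{-1}),
\]
so that $a_{\pm 1}=A_+\pm A_-$ and $f_{\pm 1}=F_+\pm F_-$. Substituting into $\sum_j\int a_jf_j\partial_xf_{-j}dx$ and expanding, the $A_+$-contribution telescopes into a total derivative $A_+\partial_x(F_+^2-F_-^2)$, which after integration by parts is $\mathcal O(\|\partial_xA_+\|_{L^\infty}\|f\|_{L^2}^2)$; by the Sobolev embedding $H^2(\mathbb R)\hookrightarrow W^{1,\infty}(\mathbb R)$ this is absorbed in the $\mathcal O(\|a\|_{H^2}\|f\|_{L^2}^2)$ remainder. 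The $A_-$-contribution reduces to $2A_-(F_-\partial_xF_+-F_+\partial_xF_-)$; one integration by parts on the first term gives $-4A_-F_+\partial_xF_-$ up to another $\|\partial_xA_-\|_{L^\infty}\|f\|_{L^2}^2$ error, and rewriting the factors in terms of $a_{\pm 1},f_{\pm 1}$ yields exactly $\tfrac12(a_{-1}-a_1)(f_1+f_{-1})\partial_x(f_1-f_{-1})$, proving \eqref{part2}. The identity \eqref{part3} follows from the same expansion with the extra sign $j$: now the roles of $A_+$ and $A_-$ are swapped, the $A_-$-contribution becomes a total derivative (and a remainder), while the $A_+$-contribution gives the principal term $-\tfrac12(a_1+a_{-1})(f_1+f_{-1})\partial_x(f_1-f_{-1})$.

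For \eqref{part4}--\eqref{part5} I would simply compute
\[
\widehat G_{1,1}(k)\pm\widehat G_{-1,-1}(k)=\frac{\chi(k)}{-2i}\Bigl[\frac{1}{k+\omega(k)}\pm\frac{1}{-k+\omega(k)}\Bigr]=\frac{\chi(k)}{-2i}\cdot\frac{\omega(k)\mp k+\omega(k)\pm k\cdot(-1)}{\omega(k)^2-k^2},
\]
so the sum produces a numerator $2\omega(k)$ and the difference a numerator $2k$; using $\omega(k)^2-k^2=k^2(q(k)^2-1)=k^2/(1+k^2)$ one obtains
\[
\widehat G_{1,1}+\widehat G_{-1,-1}=\frac{(1+k^2)q(k)}{-ik}\chi(k)=\Bigl(\frac{1}{-ik}+ik\Bigr)q(k)\chi(k),
\]
and the analogous expression without the $q(k)$ factor for the difference, which is exactly \eqref{part4}--\eqref{part5}.

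The main technical nuisance, rather than a real obstacle, will be keeping track of signs during the $A_{\pm}/F_{\pm}$ expansion in \eqref{part2}--\eqref{part3} and verifying that every commutator arising from integration by parts can indeed be absorbed into the $\mathcal O(\|a\|_{H^2}\|f\|_{L^2}^2)$ remainder via a single Sobolev embedding. No regularity is lost on the $f_j$'s, which is crucial because \eqref{part2}--\eqref{part3} are meant to be applied with $f_j$ replaced by derivatives of the error component $R^1$ in the energy estimate in Section~5, where only the $H^s$-norm of $R^1$ is under our control.
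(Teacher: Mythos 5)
Your proof is correct and follows essentially the same route as the paper's: \eqref{part1} by a direct integration by parts, \eqref{part2}--\eqref{part3} by integrating by parts and absorbing the commutator terms $\int_{\mathbb{R}}\partial_{x}a_{j}\,f_{j}f_{-j}\,dx$ via $H^{2}\hookrightarrow W^{1,\infty}$, and \eqref{part4}--\eqref{part5} by the same symbol algebra based on $\omega(k)^{2}-k^{2}=k^{2}/(1+k^{2})$. The only differences are cosmetic: you organize the integration by parts through the sum/difference variables $A_{\pm},F_{\pm}$ whereas the paper antisymmetrizes the integrals directly, and your remainder $\mathcal{O}\big(\|a\|_{H^{2}}\|f\|_{L^{2}}^{2}\big)$ is the correct quadratic-in-$f$ form that the computation actually yields.
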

\begin{proof}
By integration by parts, \eqref{part1} follows directly. Using again integration by parts, Cauchy-Schwarz inequality, and \eqref{part1}, we obtain
\begin{equation*}
\begin{split}
\sum_{j\in\{\pm1\}}&\int_{\mathbb{R}}a_{j}f_{j}\partial_{x}f_{-j}dx\\
=&\frac{1}{2}\sum_{j\in\{\pm1\}}\Big[\int_{\mathbb{R}}a_{j}f_{j}\partial_{x}f_{-j}dx-\int_{\mathbb{R}}a_{j}\partial_{x}f_{j}f_{-j}dx
-\int_{\mathbb{R}}\partial_{x}a_{j}f_{j}f_{-j}dx\Big]\\
=&\frac{1}{2}\Big[\int_{\mathbb{R}}(a_{-1}-a_{1})f_{-1}\partial_{x}f_{1}dx-\int_{\mathbb{R}}(a_{-1}-a_{1})f_{1}\partial_{x}f_{-1}dx\Big]\\
&+\mathcal{O}\big((\|a_{1}\|_{H^{2}}+\|a_{-1}\|_{H^{2}})(\|f_{1}\|_{H^{2}}+\|f_{-1}\|_{H^{2}})\big)\\
=&\frac{1}{2}\int_{\mathbb{R}}(a_{-1}-a_{1})(f_{1}+f_{-1})\partial_{x}(f_{1}-f_{-1})dx\\
&+\mathcal{O}\big((\|a_{1}\|_{H^{2}}+\|a_{-1}\|_{H^{2}})(\|f_{1}\|_{H^{2}}+\|f_{-1}\|_{H^{2}})\big),
\end{split}
\end{equation*}
and
\begin{equation*}
\begin{split}
\sum_{j\in\{\pm1\}}&j\int_{\mathbb{R}}a_{j}f_{j}\partial_{x}f_{-j}dx\\
=&\frac{1}{2}\sum_{j\in\{\pm1\}}\Big[j\int_{\mathbb{R}}a_{j}f_{j}\partial_{x}f_{-j}dx-j\int_{\mathbb{R}}a_{j}\partial_{x}f_{j}f_{-j}dx
-j\int_{\mathbb{R}}\partial_{x}a_{j}f_{j}f_{-j}dx\Big]\\
=&\frac{1}{2}\Big[\int_{\mathbb{R}}(a_{-1}+a_{1})f_{1}\partial_{x}f_{-1}dx-\int_{\mathbb{R}}(a_{-1}+a_{1})f_{-1}\partial_{x}f_{1}dx\Big]\\
&+\mathcal{O}\big((\|a_{1}\|_{H^{2}}+\|a_{-1}\|_{H^{2}})(\|f_{1}\|_{H^{2}}+\|f_{-1}\|_{H^{2}})\big)\\
=&-\frac{1}{2}\int_{\mathbb{R}}(a_{1}+a_{-1})(f_{1}+f_{-1})\partial_{x}(f_{1}-f_{-1})dx\\
&+\mathcal{O}\big((\|a_{1}\|_{H^{2}}+\|a_{-1}\|_{H^{2}})(\|f_{1}\|_{H^{2}}+\|f_{-1}\|_{H^{2}})\big).
\end{split}
\end{equation*}

Recalling $\omega(k)=k\widehat{q}(k)$ and $\widehat{q}(k)=\sqrt{\frac{2+k^{2}}{1+k^{2}}}$, we have
\begin{equation*}
\begin{split}
&\widehat{G}_{1,1}+\widehat{G}_{-1,-1}=\frac{1}{-2i}\Big(\frac{1}{k+\omega(k)}+\frac{1}{-k+\omega(k)}\Big)=\big(\frac{1}{-ik}+ik\big)\widehat{q}(k),\\
&\widehat{G}_{-1,-1}-\widehat{G}_{1,1}=\frac{1}{-2i}\Big(\frac{1}{-k+\omega(k)}-\frac{1}{k+\omega(k)}\Big)=\big(\frac{1}{-ik}+ik\big).
\end{split}
\end{equation*}
The proof is complete.
\end{proof}

Now, we are prepared to analyze $\partial_{t}E_{\ell}$. We compute
\begin{equation*}
\begin{split}
\partial_{t}E_{\ell}=&\sum_{j_{1}\in\{\pm1\}}\Big[\int_{\mathbb{R}}\partial_{x}^{\ell}\mathcal{R}_{j_{1}}^{0}
    \partial_{t}\partial_{x}^{\ell}\mathcal{R}_{j_{1}}^{0}dx
    +\int_{\mathbb{R}}\partial_{x}^{\ell}R_{j_{1}}^{1}
    \partial_{t}\partial_{x}^{\ell}R_{j_{1}}^{1}dx\\
    &+\epsilon\sum_{j_{2}\in\{\pm1\}}\sum_{n=1}^{5}\big(\int_{\mathbb{R}}\partial_{t}\partial_{x}^{\ell}R_{j_{1}}^{1}
    \partial_{x}^{\ell}B_{j_{1},j_{2}}^{1,0,n}(\phi_{c},\mathcal{R}_{j_{2}}^{0})dx
    +\int_{\mathbb{R}}\partial_{x}^{\ell}R_{j_{1}}^{1}
    \partial_{x}^{\ell}B_{j_{1},j_{2}}^{1,0,n}(\partial_{t}\phi_{c},\mathcal{R}_{j_{2}}^{0})dx\\
    &+\int_{\mathbb{R}}\partial_{x}^{\ell}R_{j_{1}}^{1}
    \partial_{x}^{\ell}B_{j_{1},j_{2}}^{1,0,n}(\phi_{c},\partial_{t}\mathcal{R}_{j_{2}}^{0})dx
    +\int_{\mathbb{R}}\partial_{t}\partial_{x}^{\ell}R_{j_{1}}^{1}
    \partial_{x}^{\ell}B_{j_{1},j_{2}}^{1,1,n}(\phi_{c},R_{j_{2}}^{1})dx\\
    &+\int_{\mathbb{R}}\partial_{x}^{\ell}R_{j_{1}}^{1}
    \partial_{x}^{\ell}B_{j_{1},j_{2}}^{1,1,n}(\partial_{t}\phi_{c},R_{j_{2}}^{1})dx
    +\int_{\mathbb{R}}\partial_{x}^{\ell}R_{j_{1}}^{1}
    \partial_{x}^{\ell}B_{j_{1},j_{2}}^{1,1,n}(\phi_{c},\partial_{t}R_{j_{2}}^{1})dx
\big)\Big].
\end{split}
\end{equation*}
Using the error equations \eqref{equation91},  \eqref{equ11-1} and \eqref{R1}, we get
\begin{equation*}
\begin{split}
&\partial_{t}E_{\ell}=\sum_{j_{1}\in\{\pm1\}}\Big[j_{1}\int_{\mathbb{R}}\partial_{x}^{\ell}\mathcal{R}_{j_{1}}^{0}
    \Omega\partial_{x}^{\ell}\mathcal{R}_{j_{1}}^{0}dx
    +\epsilon^{2}\int_{\mathbb{R}}\partial_{x}^{\ell}\mathcal{R}_{j_{1}}^{0}
    \mathcal{F}_{j_{1}}^{5}dx
    +j_{1}\int_{\mathbb{R}}\partial_{x}^{\ell}R_{j_{1}}^{1}
    \Omega\partial_{x}^{\ell}R_{j_{1}}^{1}dx\\
   & \ \ \ \ \ \ \ \ \ \ \ \ \ \ \ \ \ \ \ \ \ \ \ \ +\epsilon^{2}\int_{\mathbb{R}}\partial_{x}^{\ell}R_{j_{1}}^{1}\mathcal{F}_{j_{1}}^{6}dx
   +\int_{\mathbb{R}}\partial_{x}^{\ell}R_{j_{1}}^{1}(\epsilon^{-5/2}Res_{U^{1}_{j_{1}}}(\epsilon\Psi))\Big]\\
+\epsilon&\sum_{j_{1},j_{2}\in\{\pm1\}}\sum_{n=1}^{5}\Big[\frac{1}{2}\int_{\mathbb{R}} \partial_{x}^{\ell}R_{j_{1}}^{1}
 \partial_{x}^{\ell+1}P^{1}\vartheta^{-1}\alpha_{j_{1},j_{2}}^{n}(\phi_{c}\vartheta_{0}\mathcal{R}_{j_{2}}^{0})dx
 +j_{1}\int_{\mathbb{R}}\Omega\partial_{x}^{\ell}R_{j_{1}}^{1}
 \partial_{x}^{\ell}B_{j_{1},j_{2}}^{1,0,n}(\phi_{c},\mathcal{R}_{j_{2}}^{0})dx\\
 & \ \ \ \ \ \ \ \ \ \ \ \ \ \ \ \ \ \ \ -\int_{\mathbb{R}} \partial_{x}^{\ell}R_{j_{1}}^{1}
 \partial_{x}^{\ell}B_{j_{1},j_{2}}^{1,0,n}(\Omega\phi_{c},\mathcal{R}_{j_{2}}^{0})dx
 +j_{2}\int_{\mathbb{R}} \partial_{x}^{\ell}R_{j_{1}}^{1}
 \partial_{x}^{\ell}B_{j_{1},j_{2}}^{1,0,n}(\phi_{c},\Omega\mathcal{R}_{j_{2}}^{0})dx\Big]\\
+\epsilon&\sum_{j_{1},j_{2}\in\{\pm1\}}\sum_{n=1}^{5}\Big[\frac{1}{2}\int_{\mathbb{R}}\partial_{x}^{\ell}R_{j_{1}}^{1}
 \partial_{x}^{\ell+1}P^{1}\vartheta^{-1}\alpha_{j_{1},j_{2}}^{n}(\phi_{c}R_{j_{2}}^{1})dx
+j_{1}\int_{\mathbb{R}} \Omega\partial_{x}^{\ell}R_{j_{1}}^{1}
 \partial_{x}^{\ell}B_{j_{1},j_{2}}^{1,1,n}(\phi_{c},R_{j_{2}}^{1})dx\\
 & \ \ \ \ \ \ \ \ \ \ \ \ \ \ \ \ \ \ \ \ -\int_{\mathbb{R}} \partial_{x}^{\ell}R_{j_{1}}^{1}
 \partial_{x}^{\ell}B_{j_{1},j_{2}}^{1,1,n}(\Omega\phi_{c},R_{j_{2}}^{1})dx
 +j_{2}\int_{\mathbb{R}} \partial_{x}^{\ell}R_{j_{1}}^{1}
 \partial_{x}^{\ell}B_{j_{1},j_{2}}^{1,1,n}(\phi_{c},\Omega R_{j_{2}}^{1})dx\Big]\\
+\epsilon&\sum_{j_{1},j_{2}\in\{\pm1\}}\sum_{n=1}^{5}\Big[\int_{\mathbb{R}} \partial_{x}^{\ell}R_{j_{1}}^{1}
 \partial_{x}^{\ell}B_{j_{1},j_{2}}^{1,0,n}(\partial_{t}\phi_{c}+\Omega\phi_{c},\mathcal{R}_{j_{2}}^{0})dx
 +\int_{\mathbb{R}} \partial_{x}^{\ell}R_{j_{1}}^{1}
 \partial_{x}^{\ell}B_{j_{1},j_{2}}^{1,1,n}(\partial_{t}\phi_{c}+\Omega\phi_{c},R_{j_{2}}^{1})dx\Big]\\
 +\epsilon^{2}&\sum _{j_{1},j_{2}\in\{\pm1\}}\sum_{n=1}^{5}\Big[
\int_{\mathbb{R}}\partial_{x}^{\ell}((\sum_{m=1}^{7}D_{m})_{j_{1}}+\epsilon\mathcal{F}_{j_{1}}^{6}+\epsilon^{-7/2}Res_{U_{j_{1}}^{1}}) \partial_{x}^{\ell}B_{j_{1},j_{2}}^{1,0,n}(\phi_{c}, \mathcal{R}_{j_{2}}^{0})dx
+\int_{\mathbb{R}}\partial_{x}^{\ell}R_{j_{1}}^{1}\partial_{x}^{\ell}\mathcal{F}_{j_{2}}^{4}\Big]\\
+\epsilon&\sum _{j_{1},j_{2}\in\{\pm1\}}\sum_{n=1}^{5}\Big[
\int_{\mathbb{R}}\partial_{x}^{\ell}(\epsilon^{2}\mathcal{F}_{j_{1}}^{6}+\epsilon^{-5/2}Res_{U_{j_{1}}^{1}}) \partial_{x}^{\ell}B_{j_{1},j_{2}}^{1,1,n}(\phi_{c}, R_{j_{2}}^{1})dx\\
& \ \ \ \ \ \ \ \ \ \ \ \ \ \ \ \ \ \ \ +\int_{\mathbb{R}}\partial_{x}^{\ell}R_{j_{1}}^{1} \partial_{x}^{\ell}B_{j_{1},j_{2}}^{1,1,n}(\phi_{c}, \epsilon^{2}\mathcal{F}_{j_{2}}^{6}+\epsilon^{-5/2}Res_{U_{j_{2}}^{1}})dx\Big]\\
+\epsilon^{2}&\sum _{j_{1},j_{2}\in\{\pm1\}}\sum_{n=1}^{5}\Big[
\int_{\mathbb{R}}\partial_{x}^{\ell}(\sum_{m=1}^{7}D_{m})_{j_{1}} \partial_{x}^{\ell}B_{j_{1},j_{2}}^{1,1,n}(\phi_{c}, R_{j_{2}}^{1})dx
+\int_{\mathbb{R}}\partial_{x}^{\ell}R_{j_{1}}^{1} \partial_{x}^{\ell}B_{j_{1},j_{2}}^{1,1,n}(\phi_{c}, (\sum_{m=1}^{7}D_{m})_{j_{2}})dx\Big]\\
+\epsilon^{3}&\sum _{j_{1}\in\{\pm1\}}
\int_{\mathbb{R}}\partial_{x}^{\ell}R_{j_{1}}^{1}\partial_{x}^{\ell}(\sum_{m=1}^{7}G_{m})_{j_{1}}dx.\\
\end{split}
\end{equation*}

Due to the skew symmetry of $\Omega$, the first and third integrals equal to zero. Since the operators $B_{j_{1},j_{2}}^{1,0,n}$ and $B_{j_{1},j_{2}}^{1,1,n}$ satisfy \eqref{16'} and \eqref{16}, the sixth integral cancels with the sum of the seventh, the eighth and the ninth integral. Similarly, the tenth integral cancels with the sum of the eleventh, the twelfth and the thirteenth integral. Moreover, because of the estimates \eqref{equation12-1} and \eqref{equation12-3} for the residual, the form of the terms $\mathcal{F}^{4}$ and $\mathcal{F}^{5}$ in equations \eqref{equation91} and \eqref{equ11-1}, the Lemma \ref{L5} for the bound $\partial_{t}\widehat{\psi}_{\pm1}+i\omega\widehat{\psi}_{\pm1}$, the regularity properties of the operators $B_{j_{1},j_{2}}^{1,0,n}$ and $B_{j_{1},j_{2}}^{1,1,n}$ from Lemma \ref{L10}(a) and Lemma \ref{L8}(a), identity \eqref{part1} and the Corollary \ref{C1}, the second, the fourth, the fifth, the fourteenth to the nineteenth integrals can be bounded by $C\epsilon^{2}(\mathcal{E}_{s}+\epsilon^{3/2}\mathcal{E}_{s}^{3/2}+\epsilon^{3}\mathcal{E}_{s}^{2}+1)$ for some constant $C$. Hence, we have
\begin{align*}
\partial_{t}E_{\ell}=&\epsilon^{2}\sum _{j_{1}\in\{\pm1\}}\sum_{n=1}^{5}\Big[
\int_{\mathbb{R}}\partial_{x}^{\ell}(\sum_{m=1}^{7}D_{m})_{j_{1}} \partial_{x}^{\ell}B_{j_{1},j_{1}}^{1,1,n}(\phi_{c}, R_{j_{1}}^{1})dx\\
&+\int_{\mathbb{R}}\partial_{x}^{\ell}R_{j_{1}}^{1} \partial_{x}^{\ell}B_{j_{1},j_{1}}^{1,1,n}(\phi_{c}, (\sum_{m=1}^{7}D_{m})_{j_{1}})dx\Big]\\
&+\epsilon^{2}\sum _{j_{1}\in\{\pm1\}}\sum_{n=1}^{5}\Big[\int_{\mathbb{R}}\partial_{x}^{\ell}(\sum_{m=1}^{7}D_{m})_{j_{1}} \partial_{x}^{\ell}B_{j_{1},-j_{1}}^{1,1,n}(\phi_{c}, R_{-j_{1}}^{1})dx\\
&+\int_{\mathbb{R}}\partial_{x}^{\ell}R_{j_{1}}^{1} \partial_{x}^{\ell}B_{j_{1},-j_{1}}^{1,1,n}(\phi_{c}, (\sum_{m=1}^{7}D_{m})_{-j_{1}})dx\Big]\\
&+\epsilon^{3}\sum _{j_{1}\in\{\pm1\}}
\int_{\mathbb{R}}\partial_{x}^{\ell}R_{j_{1}}^{1}\partial_{x}^{\ell}(\sum_{m=1}^{7}G_{m})dx\\
&+\epsilon^{2}(\mathcal{E}_{s}+\epsilon^{3/2}\mathcal{E}_{s}^{3/2}+\epsilon^{3}\mathcal{E}_{s}^{2}+1)\\
=&:\sum_{n=1}^{5}I_{1}^{n}+\sum_{n=1}^{5}I_{2}^{n}+I_{3}
+\epsilon^{2}(\mathcal{E}_{s}+\epsilon^{3/2}\mathcal{E}_{s}^{3/2}+\epsilon^{3}\mathcal{E}_{s}^{2}+1).
\end{align*}
Firstly, we analyse $I_{1}^{n}$. To extract all terms with more than $\ell$ spatial derivatives falling on $R_{1}^{1}$ or $R_{-1}^{1}$ we apply Leibniz's rule.
For $I_{1}^{1}$, recalling the equation \eqref{R1}, \  \eqref{17} and the asymptotic expansion \eqref{15} and \eqref{19}, we have
\begin{align*}
I_{1}^{1}=&\epsilon^{2}\sum _{j_{1}\in\{\pm1\}}\Big[
\int\partial_{x}^{\ell}(\sum_{m=1}^{7}D_{m})_{j_{1}} B_{j_{1},j_{1}}^{1,1,1}(\phi_{c}, \partial_{x}^{\ell}R_{j_{1}}^{1})dx\\
&+\ell\int\partial_{x}^{\ell}(\sum_{m=1}^{7}D_{m})_{j_{1}} B_{j_{1},j_{1}}^{1,1,1}(\partial_{x}\phi_{c}, \partial_{x}^{\ell-1}R_{j_{1}}^{1})dx
+\int\partial_{x}^{\ell}R_{j_{1}}^{1} B_{j_{1},j_{1}}^{1,1,1}(\phi_{c}, \partial_{x}^{\ell}(\sum_{m=1}^{7}D_{m})_{j_{1}})dx\\
&+\ell\int\partial_{x}^{\ell}R_{j_{1}}^{1} B_{j_{1},j_{1}}^{1,1,1}(\partial_{x}\phi_{c}, \partial_{x}^{\ell-1}(\sum_{m=1}^{7}D_{m})_{j_{1}})dx\Big]
+\epsilon^{2}(\mathcal{E}_{s}+\epsilon^{3/2}\mathcal{E}_{s}^{3/2}+\epsilon^{3}\mathcal{E}_{s}^{2}+1)\\
=&\epsilon^{2}\sum _{j_{1}\in\{\pm1\}}\Big[
\int\partial_{x}^{\ell}(\sum_{m=1}^{7}D_{m})_{j_{1}} S_{j_{1},j_{1}}^{1}(\partial_{x}\phi_{c}, \partial_{x}^{\ell}R_{j_{1}}^{1})dx\\
& \ \ \ \ \ \ \ \ \ +2\ell\int\partial_{x}^{\ell}(\sum_{m=1}^{7}D_{m})_{j_{1}} B_{j_{1},j_{1}}^{1,1,1}(\partial_{x}\phi_{c}, \partial_{x}^{\ell-1}R_{j_{1}}^{1})dx\Big]+\epsilon^{2}(\mathcal{E}_{s}+\epsilon^{3/2}\mathcal{E}_{s}^{3/2}+\epsilon^{3}\mathcal{E}_{s}^{2}+1)\\
=&-\epsilon^{2}(2\ell+1)\sum _{j_{1}\in\{\pm1\}}j_{1}\int\partial_{x}^{\ell}(\sum_{m=1}^{7}D_{m})_{j_{1}}(G_{j_{1},j_{1}}\partial_{x}\phi_{c}) (q\partial_{x}^{\ell}R_{j_{1}}^{1})dx\\
& \ \ \ \ \ \ \ \ +\epsilon^{2}(\mathcal{E}_{s}+\epsilon^{3/2}\mathcal{E}_{s}^{3/2}+\epsilon^{3}\mathcal{E}_{s}^{2}+1)\\
=:&\sum_{i=1}^{7}I_{1i}^{1}+\epsilon^{2}(\mathcal{E}_{s}+\epsilon^{3/2}\mathcal{E}_{s}^{3/2}+\epsilon^{3}\mathcal{E}_{s}^{2}+1).
\end{align*}
For $I_{11}^{1}$, because of \eqref{part1} and \eqref{part3}, we have
\begin{equation*}
\begin{split}
I_{11}^{1}=&-\epsilon^{2}(\ell+1/2)\sum _{j_{1}\in\{\pm1\}}\int\partial_{x}^{\ell+1}\big(\phi_{1}q(R_{j_{1}}^{1}-R_{-j_{1}}^{1})\big)G_{j_{1},j_{1}}\partial_{x}\phi_{c} q\partial_{x}^{\ell}R_{j_{1}}^{1}dx\\
=&\epsilon^{2}(\ell+1/2)\sum _{j_{1}\in\{\pm1\}}\int (G_{j_{1},j_{1}}\partial_{x}\phi_{c})\phi_{1} (q\partial_{x}^{\ell}R_{j_{1}}^{1})\partial_{x}^{\ell+1}(qR_{-j_{1}}^{1}) dx
+\epsilon^{2}\mathcal{O}(\mathcal{E}_{s}+\epsilon^{3/2}\mathcal{E}_{s}^{3/2})\\
=&\frac{\epsilon^{2}(\ell+1/2)}{2}\int (G_{-1,-1}-G_{1,1})\partial_{x}\phi_{c}\phi_{1} \partial_{x}^{\ell}q(R_{1}^{1}+R_{-1}^{1})\partial_{x}^{\ell+1}q(R_{1}^{1}-R_{-1}^{1}) dx\\
&+\epsilon^{2}\mathcal{O}(\mathcal{E}_{s}+\epsilon^{3/2}\mathcal{E}_{s}^{3/2}).
\end{split}
\end{equation*}
Recalling \eqref{equation3} and \eqref{part4}, we have
\begin{equation*}
(\widehat{G}_{1,1}-\widehat{G}_{-1,-1})(k)=(\frac{1}{-ik}+ik)\chi(k),
\end{equation*}
and
\begin{equation*}
\widehat{q}(k)-1=\mathcal{O}(k^{-2}).
\end{equation*}
Then we have
\begin{equation*}
\begin{split}
I_{11}^{1}=&\frac{\epsilon^{2}(\ell+1/2)}{2}\int (-\phi_{c}+\partial_{x}^{2}\phi_{c})\phi_{1} \partial_{x}^{\ell}(R_{1}^{1}+R_{-1}^{1})\partial_{x}^{\ell+1}(R_{1}^{1}-R_{-1}^{1}) dx\\
&+\epsilon^{2}\mathcal{O}(\mathcal{E}_{s}+\epsilon^{3/2}\mathcal{E}_{s}^{3/2}).
\end{split}
\end{equation*}
By the equation \eqref{equ1,-1}, we have
\begin{equation}
\begin{split}\label{equ}
\partial_{t}(R_{1}^{1}+R_{-1}^{1})=&\partial_{x}q(R_{1}^{1}-R_{-1}^{1})+\frac{\epsilon}{2} \partial_{x}\big(\phi_{1}q(R_{1}^{1}-R_{-1}^{1})\big)
+\frac{\epsilon}{2} \partial_{x}\big(q\phi_{2}(R_{1}^{1}+R_{-1}^{1})\big)\\
&+\epsilon^{-5/2}\big(Res_{U_{1}^{1}}(\epsilon\Psi)+Res_{U_{-1}^{1}}(\epsilon\Psi)\big).
\end{split}
\end{equation}
Taking $\partial_{x}^{\ell}$ on the equation \eqref{equ}, we have
\begin{equation}
\begin{split}\label{equu}
\partial_{x}^{\ell+1}&(R_{1}^{1}-R_{-1}^{1})\\
=&\frac{1}{1+\frac{\epsilon\phi_{1}}{2}}\Big[
\partial_{x}^{\ell}\partial_{t}(R_{1}^{1}+R_{-1}^{1})-\frac{\epsilon q\phi_{2}}{2}\partial_{x}^{\ell+1}(R_{1}^{1}+R_{-1}^{1})
+(1+\frac{\epsilon\phi_{1}}{2})(1-q)\partial_{x}^{\ell+1}(R_{1}^{1}-R_{-1}^{1})\\
&-\frac{\epsilon}{2}\sum_{i=1}^{\ell+1}C_{\ell+1}^{i}\partial_{x}^{i}\phi_{1}\partial_{x}^{\ell-i+1}q(R_{1}^{1}-R_{-1}^{1})
-\frac{\epsilon}{2}\sum_{i=1}^{\ell+1}C_{\ell+1}^{i}\partial_{x}^{i}q\phi_{2}\partial_{x}^{\ell-i+1}q(R_{1}^{1}+R_{-1}^{1})\\
&-\epsilon^{-5/2}\partial_{x}^{\ell}(Res_{U_{1}^{1}}(\epsilon\Psi)+Res_{U_{-1}^{1}}(\epsilon\Psi))\Big].
\end{split}
\end{equation}
Then by \eqref{equu} and integration by parts, we have
\begin{align*}
I_{11}^{1}=&\frac{\epsilon^{2}(\ell+1/2)}{2}\int \frac{(-\phi_{c}+\partial_{x}^{2}\phi_{c})\phi_{1}}{1+\frac{\epsilon\phi_{1}}{2}} \partial_{x}^{\ell}(R_{1}^{1}+R_{-1}^{1})\partial_{x}^{\ell}\partial_{t}(R_{1}^{1}+R_{-1}^{1}) dx\\
&-\frac{\epsilon^{3}(\ell+1/2)}{4}\int \frac{(-\phi_{c}+\partial_{x}^{2}\phi_{c})\phi_{1}q\phi_{2}}{1+\frac{\epsilon\phi_{1}}{2}} \partial_{x}^{\ell}(R_{1}^{1}+R_{-1}^{1})\partial_{x}^{\ell+1}(R_{1}^{1}+R_{-1}^{1}) dx\\
&+\epsilon^{2}\mathcal{O}(\mathcal{E}_{s}+\epsilon^{3/2}\mathcal{E}_{s}^{3/2})\\
=&\frac{\epsilon^{2}(\ell+1/2)}{2}\frac{d}{dt}\int \frac{(-\phi_{c}+\partial_{x}^{2}\phi_{c})\phi_{1}}{2+\epsilon\phi_{1}}
 \big(\partial_{x}^{\ell}(R_{1}^{1}+R_{-1}^{1})\big)^{2}dx +\epsilon^{2}\mathcal{O}(\mathcal{E}_{s}+\epsilon^{3/2}\mathcal{E}_{s}^{3/2}).
\end{align*}

Similarly, we have
\begin{equation*}
\begin{split}
I_{12}^{1}=&-\epsilon^{2}(\ell+1/2)\sum _{j_{1}\in\{\pm1\}}j_{1}\int\partial_{x}^{\ell+1}\big(q\phi_{2}(R_{j_{1}}^{1}+R_{-j_{1}}^{1})\big)G_{j_{1},j_{1}}\partial_{x}\phi_{c} (q\partial_{x}^{\ell}R_{j_{1}}^{1})dx\\
=&-\epsilon^{2}(\ell+1/2)\sum _{j_{1}\in\{\pm1\}}j_{1}\int G_{j_{1},j_{1}}\partial_{x}\phi_{c}q\phi_{2} \partial_{x}^{\ell}R_{j_{1}}^{1}\partial_{x}^{\ell+1}R_{-j_{1}}^{1} dx
+\epsilon^{2}\mathcal{O}(\mathcal{E}_{s}+\epsilon^{3/2}\mathcal{E}_{s}^{3/2})\\
=&\frac{\epsilon^{2}(\ell+1/2)}{2}\int (G_{1,1}+G_{-1,-1})\partial_{x}\phi_{c}q\phi_{2} \partial_{x}^{\ell}(R_{1}^{1}+R_{-1}^{1})\partial_{x}^{\ell+1}(R_{1}^{1}-R_{-1}^{1}) dx\\
&+\epsilon^{2}\mathcal{O}(\mathcal{E}_{s}+\epsilon^{3/2}\mathcal{E}_{s}^{3/2})\\
=&\frac{\epsilon^{2}(\ell+1/2)}{2}\frac{d}{dt}\int \frac{q(-\phi_{c}+\partial_{x}^{2}\phi_{c})q\phi_{2}}{2+\epsilon\phi_{1}}
 \big(\partial_{x}^{\ell}(R_{1}^{1}+R_{-1}^{1})\big)^{2}dx +\epsilon^{2}\mathcal{O}(\mathcal{E}_{s}+\epsilon^{3/2}\mathcal{E}_{s}^{3/2}),
\end{split}
\end{equation*}
\begin{equation*}
\begin{split}
I_{13}^{1}=-\frac{\epsilon^{2}(\ell+1/2)}{2}\frac{d}{dt}\int \frac{q(-\phi_{c}+\partial_{x}^{2}\phi_{c})q\phi_{2}}{2+\epsilon\phi_{1}}
 \big(\partial_{x}^{\ell}(R_{1}^{1}+R_{-1}^{1})\big)^{2}dx
 +\epsilon^{2}\mathcal{O}(\mathcal{E}_{s}+\epsilon^{3/2}\mathcal{E}_{s}^{3/2}),
\end{split}
\end{equation*}
\begin{equation*}
\begin{split}
I_{14}^{1}
=\frac{\epsilon^{2}(\ell+1/2)}{2}\frac{d}{dt}\int \frac{(-\phi_{c}+\partial_{x}^{2}\phi_{c})\phi_{1}}{2+\epsilon\phi_{1}}
 \big(\partial_{x}^{\ell}(R_{1}^{1}+R_{-1}^{1})\big)^{2}dx
 +\epsilon^{2}\mathcal{O}(\mathcal{E}_{s}+\epsilon^{3/2}\mathcal{E}_{s}^{3/2}),
\end{split}
\end{equation*}
and
\begin{equation*}
\begin{split}
I_{16}^{1}
=-\frac{\epsilon^{3}(\ell+1/2)}{2}\frac{d}{dt}\int \frac{\mathcal{G}(-\phi_{c}+\partial_{x}^{2}\phi_{c})}{2+\epsilon\phi_{1}}
 \big(\partial_{x}^{\ell}(R_{1}^{1}+R_{-1}^{1})\big)^{2}dx
 +\epsilon^{2}\mathcal{O}(\mathcal{E}_{s}+\epsilon^{3/2}\mathcal{E}_{s}^{3/2}).
\end{split}
\end{equation*}
Because of the well properties for $I_{15}^{1}$ and $I_{17}^{1}$, we obtain $I_{15}^{1}+I_{17}^{1}
=\epsilon^{2}\mathcal{O}(\mathcal{E}_{s}+\epsilon^{3/2}\mathcal{E}_{s}^{3/2})$.

Then we have
\begin{equation*}
\begin{split}
I_{1}^{1}=&\frac{\epsilon^{2}(\ell+1/2)}{2}\frac{d}{dt}\int \frac{(-\phi_{c}+\partial_{x}^{2}\phi_{c})(2\phi_{1}-\epsilon\mathcal{G})}{2+\epsilon\phi_{1}}
 \big(\partial_{x}^{\ell}(R_{1}^{1}+R_{-1}^{1})\big)^{2}dx\\
&+\epsilon^{2}\mathcal{O}(\mathcal{E}_{s}+\epsilon^{3/2}\mathcal{E}_{s}^{3/2}).
\end{split}
\end{equation*}
Similarly, by the equations \eqref{151}, \eqref{18}, \eqref{191}, \eqref{part1}, \eqref{part2}, \eqref{part5} and \eqref{equu}, we have
\begin{equation*}
\begin{split}
I_{1}^{2}=&\epsilon^{2}\sum _{j_{1}\in\{\pm1\}}\Big[
\int\partial_{x}^{\ell}(\sum_{m=1}^{7}D_{m})_{j_{1}} B_{j_{1},j_{1}}^{1,1,2}(\phi_{c}, \partial_{x}^{\ell}R_{j_{1}}^{1})dx
+\ell\int\partial_{x}^{\ell}(\sum_{m=1}^{7}D_{m})_{j_{1}} B_{j_{1},j_{1}}^{1,1,2}(\partial_{x}\phi_{c}, \partial_{x}^{\ell-1}R_{j_{1}}^{1})dx\\
&+\int\partial_{x}^{\ell}R_{j_{1}}^{1} B_{j_{1},j_{1}}^{1,1,2}\big(\phi_{c}, \partial_{x}^{\ell}(\sum_{m=1}^{7}D_{m})_{j_{1}}\big)dx
+\ell\int\partial_{x}^{\ell}R_{j_{1}}^{1} B_{j_{1},j_{1}}^{1,1,2}\big(\partial_{x}\phi_{c}, \partial_{x}^{\ell-1}(\sum_{m=1}^{7}D_{m})_{j_{1}}\big)dx\Big]\\
&+\epsilon^{2}\mathcal{O}(\mathcal{E}_{s}+\epsilon^{3/2}\mathcal{E}_{s}^{3/2})\\
=&\epsilon^{2}\sum _{j_{1}\in\{\pm1\}}\Big[
\int\partial_{x}^{\ell}(\sum_{m=1}^{7}D_{m})_{j_{1}} S_{j_{1},j_{1}}^{2}(\partial_{x}\phi_{c}, \partial_{x}^{\ell}R_{j_{1}}^{1})dx
+2\ell\int\partial_{x}^{\ell}(\sum_{m=1}^{7}D_{m})_{j_{1}} B_{j_{1},j_{1}}^{1,1,2}(\partial_{x}\phi_{c}, \partial_{x}^{\ell-1}R_{j_{1}}^{1})dx\Big]\\
&+\epsilon^{2}\mathcal{O}(\mathcal{E}_{s}+\epsilon^{3/2}\mathcal{E}_{s}^{3/2})\\
=&-\epsilon^{2}(2\ell+1)\sum _{j_{1}\in\{\pm1\}}\int\partial_{x}^{\ell}(\sum_{m=1}^{7}D_{m})_{j_{1}}G_{j_{1},j_{1}}\partial_{x}q\phi_{c} \partial_{x}^{\ell}R_{j_{1}}^{1}dx
+\epsilon^{2}\mathcal{O}(\mathcal{E}_{s}+\epsilon^{3/2}\mathcal{E}_{s}^{3/2})\\
=&\frac{\epsilon^{2}(\ell+1/2)}{2}\frac{d}{dt}\int \frac{-q^{2}(-\phi_{c}+\partial_{x}^{2}\phi_{c})(2\phi_{1}-\epsilon\mathcal{G})}{2+\epsilon\phi_{1}}
 (\partial_{x}^{\ell}(R_{1}^{1}+R_{-1}^{1}))^{2}dx\\
&+\epsilon^{2}\mathcal{O}(\mathcal{E}_{s}+\epsilon^{3/2}\mathcal{E}_{s}^{3/2}).
\end{split}
\end{equation*}
By the equations \eqref{152}, \eqref{18}, \eqref{192}, \eqref{part1}, \eqref{part2}, \eqref{part3}, \eqref{part5} and \eqref{equu}, we have
\begin{equation*}
\begin{split}
I_{1}^{3}=&\epsilon^{2}\sum _{j_{1}\in\{\pm1\}}\Big[
\int\partial_{x}^{\ell}(\sum_{m=1}^{7}D_{m})_{j_{1}} B_{j_{1},j_{1}}^{1,1,3}(\phi_{c}, \partial_{x}^{\ell}R_{j_{1}}^{1})dx
+\ell\int\partial_{x}^{\ell}(\sum_{m=1}^{7}D_{m})_{j_{1}} B_{j_{1},j_{1}}^{1,1,3}(\partial_{x}\phi_{c}, \partial_{x}^{\ell-1}R_{j_{1}}^{1})dx\\
&+\int\partial_{x}^{\ell}R_{j_{1}}^{1} B_{j_{1},j_{1}}^{1,1,3}\big(\phi_{c}, \partial_{x}^{\ell}(\sum_{m=1}^{7}D_{m})_{j_{1}}\big)dx
+\ell\int\partial_{x}^{\ell}R_{j_{1}}^{1} B_{j_{1},j_{1}}^{1,1,3}\big(\partial_{x}\phi_{c}, \partial_{x}^{\ell-1}(\sum_{m=1}^{7}D_{m})_{j_{1}}\big)dx\Big]\\
&+\epsilon^{2}\mathcal{O}(\mathcal{E}_{s}+\epsilon^{3/2}\mathcal{E}_{s}^{3/2})\\
=&\epsilon^{2}\sum _{j_{1}\in\{\pm1\}}\Big[
\int\partial_{x}^{\ell}(\sum_{m=1}^{7}D_{m})_{j_{1}} S_{j_{1},j_{1}}^{3}(\partial_{x}\phi_{c}, \partial_{x}^{\ell}R_{j_{1}}^{1})dx
+2\ell\int\partial_{x}^{\ell}(\sum_{m=1}^{7}D_{m})_{j_{1}} B_{j_{1},j_{1}}^{1,1,3}(\partial_{x}\phi_{c}, \partial_{x}^{\ell-1}R_{j_{1}}^{1})dx\Big]\\
&+\epsilon^{2}\mathcal{O}(\mathcal{E}_{s}+\epsilon^{3/2}\mathcal{E}_{s}^{3/2})\\
=&-\epsilon^{2}(2\ell+1)\sum _{j_{1}\in\{\pm1\}}\int\partial_{x}^{\ell}(\sum_{m=1}^{7}D_{m})_{j_{1}}G_{j_{1},j_{1}}\partial_{x}q\phi_{c} q\partial_{x}^{\ell}R_{j_{1}}^{1}dx
+\epsilon^{2}\mathcal{O}(\mathcal{E}_{s}+\epsilon^{3/2}\mathcal{E}_{s}^{3/2})\\
=&\frac{\epsilon^{2}(\ell+1/2)}{2}\frac{d}{dt}\int \frac{-q^{2}(-\phi_{c}+\partial_{x}^{2}\phi_{c})(2\phi_{1}-\epsilon\mathcal{G})}{2+\epsilon\phi_{1}}
 (\partial_{x}^{\ell}(R_{1}^{1}+R_{-1}^{1}))^{2}dx\\
&+\epsilon^{2}\mathcal{O}(\mathcal{E}_{s}+\epsilon^{3/2}\mathcal{E}_{s}^{3/2}).
\end{split}
\end{equation*}
By the equations \eqref{153}, \eqref{17}, \eqref{193}, \eqref{part1}, \eqref{part2}, \eqref{part3}, \eqref{part5} and \eqref{equu}, we have
\begin{equation*}
\begin{split}
I_{1}^{4}=&\epsilon^{2}\sum _{j_{1}\in\{\pm1\}}\Big[
\int\partial_{x}^{\ell}(\sum_{m=1}^{7}D_{m})_{j_{1}} B_{j_{1},j_{1}}^{1,1,4}(\phi_{c}, \partial_{x}^{\ell}R_{j_{1}}^{1})dx
+\ell\int\partial_{x}^{\ell}(\sum_{m=1}^{7}D_{m})_{j_{1}} B_{j_{1},j_{1}}^{1,1,4}(\partial_{x}\phi_{c}, \partial_{x}^{\ell-1}R_{j_{1}}^{1})dx\\
&+\int\partial_{x}^{\ell}R_{j_{1}}^{1} B_{j_{1},j_{1}}^{1,1,4}\big(\phi_{c}, \partial_{x}^{\ell}(\sum_{m=1}^{7}D_{m})_{j_{1}}\big)dx
+\ell\int\partial_{x}^{\ell}R_{j_{1}}^{1} B_{j_{1},j_{1}}^{1,1,4}\big(\partial_{x}\phi_{c}, \partial_{x}^{\ell-1}(\sum_{m=1}^{7}D_{m})_{j_{1}}\big)dx\Big]
+\epsilon^{2}\mathcal{O}(\mathcal{E}_{s})\\
=&\epsilon^{2}\sum _{j_{1}\in\{\pm1\}}\Big[
\int\partial_{x}^{\ell}(\sum_{m=1}^{7}D_{m})_{j_{1}} S_{j_{1},j_{1}}^{4}(\partial_{x}\phi_{c}, \partial_{x}^{\ell}R_{j_{1}}^{1})dx
+2\ell\int\partial_{x}^{\ell}(\sum_{m=1}^{7}D_{m})_{j_{1}} B_{j_{1},j_{1}}^{1,1,4}(\partial_{x}\phi_{c}, \partial_{x}^{\ell-1}R_{j_{1}}^{1})dx\Big]\\
=&\epsilon^{2}(2\ell+1)\sum _{j_{1}\in\{\pm1\}}j_{1}\int\partial_{x}^{\ell}(\sum_{m=1}^{7}D_{m})_{j_{1}}G_{j_{1},j_{1}}\partial_{x}\phi_{c} \partial_{x}^{\ell}R_{j_{1}}^{1}dx
+\epsilon^{2}\mathcal{O}(\mathcal{E}_{s})\\
=&\frac{\epsilon^{2}(\ell+1/2)}{2}\frac{d}{dt}\int \frac{-(-\phi_{c}+\partial_{x}^{2}\phi_{c})(2\phi_{1}-\epsilon\mathcal{G})}{2+\epsilon\phi_{1}}
 \big(\partial_{x}^{\ell}(R_{1}^{1}+R_{-1}^{1})\big)^{2}dx\\
&+\epsilon^{2}\mathcal{O}(\mathcal{E}_{s}+\epsilon^{3/2}\mathcal{E}_{s}^{3/2}).
\end{split}
\end{equation*}

For $I_{1}^{5}$, recall
\begin{equation*}
\begin{split}
b_{j_{1},j_{2}}^{1,1,5}=&\frac{-\frac{1}{2}k\langle k\rangle^{-2}\langle k-m\rangle^{-2}\langle m\rangle^{-2}}{-j_{1}\omega(k)-\omega(k-m)+j_{2}\omega(m)}
 \ \ for\ |k|>\delta, \ |m|>\delta.
\end{split}
\end{equation*}
By integration by parts, we have
\begin{equation*}
\begin{split}
I_{1}^{5}=&\epsilon^{2}\sum _{j_{1}\in\{\pm1\}}\Big[
\int\partial_{x}^{\ell}(\sum_{m=1}^{7}D_{m})_{j_{1}} \partial_{x}^{\ell}B_{j_{1},j_{1}}^{1,1,5}(\phi_{c}, R_{j_{1}}^{1})dx
+\int\partial_{x}^{\ell}R_{j_{1}}^{1} \partial_{x}^{\ell}B_{j_{1},j_{1}}^{1,1,5}\big(\phi_{c}, (\sum_{m=1}^{7}D_{m})_{j_{1}}\big)dx\Big]\\
=&\epsilon^{2}\mathcal{O}(\mathcal{E}_{s}+\epsilon^{3/2}\mathcal{E}_{s}^{3/2}).
\end{split}
\end{equation*}
Then we have the estimate
\begin{equation*}
\begin{split}
\sum_{n=1}^{5}I_{1}^{n}=&-\frac{\epsilon^{2}(\ell+1/2)}{2}\frac{d}{dt}\int \frac{2q^{2}(-\phi_{c}+\partial_{x}^{2}\phi_{c})(2\phi_{1}-\epsilon\mathcal{G})}{2+\epsilon\phi_{1}}
\big(\partial_{x}^{\ell}(R_{1}^{1}+R_{-1}^{1})\big)^{2}dx\\
&+\epsilon^{2}\mathcal{O}(\mathcal{E}_{s}+\epsilon^{3/2}\mathcal{E}_{s}^{3/2}).
\end{split}
\end{equation*}
Next, we estimate $I_{2}^{n}$.

For $I_{2}^{1}$, by \eqref{15}, \eqref{17} and Lemma \ref{L9}, we have
\begin{equation*}
\begin{split}
I_{2}^{1}=&\epsilon^{2}\sum _{j_{1}\in\{\pm1\}}\Big[
\int\partial_{x}^{\ell}(\sum_{m=1}^{7}D_{m})_{j_{1}} B_{j_{1},-j_{1}}^{1,1,1}(\phi_{c}, \partial_{x}^{\ell}R_{-j_{1}}^{1})dx
+\int\partial_{x}^{\ell}R_{j_{1}}^{1} B_{j_{1},-j_{1}}^{1,1,1}\big(\phi_{c}, \partial_{x}^{\ell}(\sum_{m=1}^{7}D_{m})_{-j_{1}}\big)dx\Big]\\
&+\epsilon^{2}\mathcal{O}(\mathcal{E}_{s}+\epsilon^{3/2}\mathcal{E}_{s}^{3/2})\\
=&\epsilon^{2}\sum _{j_{1}\in\{\pm1\}}\Big[
\int\partial_{x}^{\ell}(\sum_{m=1}^{7}D_{m})_{j_{1}} B_{j_{1},-j_{1}}^{1,1,1}(\phi_{c}, \partial_{x}^{\ell}R_{-j_{1}}^{1})dx
+\int\partial_{x}^{\ell}(\sum_{m=1}^{7}D_{m})_{-j_{1}} B_{-j_{1},j_{1}}^{1,1,1}(\phi_{c}, \partial_{x}^{\ell}R_{j_{1}}^{1})dx\Big]\\
&+\int\partial_{x}^{\ell}(\sum_{m=1}^{7}D_{m})_{-j_{1}} S_{-j_{1},j_{1}}^{1}(\phi_{c}, \partial_{x}^{\ell}R_{j_{1}}^{1})dx
+\epsilon^{2}\mathcal{O}(\mathcal{E}_{s}+\epsilon^{3/2}\mathcal{E}_{s}^{3/2})\\
=:&\sum_{i=1}^{7}I_{2i}^{1}+\epsilon^{2}\mathcal{O}(\mathcal{E}_{s}+\epsilon^{3/2}\mathcal{E}_{s}^{3/2}).
\end{split}
\end{equation*}

For $I_{21}^{1}$, we have
\begin{equation*}
\begin{split}
I_{21}^{1}=&\frac{\epsilon^{2}}{2}\sum _{j_{1}\in\{\pm1\}}j_{1}\int\partial_{x}^{\ell+1}\big(\phi_{1}q(R_{j_{1}}^{1}-R_{-j_{1}}^{1})\big)B^{1,1,1}_{j_{1},-j_{1}}(\phi_{c} , \partial_{x}^{\ell}R_{-j_{1}}^{1})dx\\
&-j_{1}\int\partial_{x}^{\ell+1}\big(\phi_{1}q(R_{j_{1}}^{1}-R_{-j_{1}}^{1})\big)B^{1,1,1}_{-j_{1},j_{1}}(\phi_{c} , \partial_{x}^{\ell}R_{j_{1}}^{1})dx\\
=&0.
\end{split}
\end{equation*}

For $I_{22}^{1}$, we have
\begin{equation*}
\begin{split}
I_{22}^{1}=&\frac{\epsilon^{2}}{2}\sum _{j_{1}\in\{\pm1\}}\int\partial_{x}^{\ell+1}\big(q\phi_{2}(R_{j_{1}}^{1}+R_{-j_{1}}^{1})\big)B^{1,1,1}_{j_{1},-j_{1}}(\phi_{c} , \partial_{x}^{\ell}R_{-j_{1}}^{1})dx\\
&+\int\partial_{x}^{\ell+1}\big(q\phi_{2}\big(R_{j_{1}}^{1}+R_{-j_{1}}^{1})\big)B^{1,1,1}_{-j_{1},j_{1}}(\phi_{c} , \partial_{x}^{\ell}R_{j_{1}}^{1})dx\\
=&\epsilon^{2}\sum _{j_{1}\in\{\pm1\}}\int\partial_{x}^{\ell+1}\big(q\phi_{2}(R_{j_{1}}^{1}+R_{-j_{1}}^{1})\big)B^{1,1,1}_{-j_{1},j_{1}}(\phi_{c} , \partial_{x}^{\ell}R_{j_{1}}^{1})dx\\
=&-\frac{\epsilon^{2}}{2}\sum _{j_{1}\in\{\pm1\}}\int\phi_{c}q\phi_{2}\partial_{x}^{\ell}R_{j_{1}}^{1}\partial_{x}^{\ell+1}R_{-j_{1}}^{1}dx
+\epsilon^{2}\mathcal{O}(\mathcal{E}_{s}+\epsilon^{3/2}\mathcal{E}_{s}^{3/2})\\
=&\epsilon^{2}\mathcal{O}(\mathcal{E}_{s}+\epsilon^{3/2}\mathcal{E}_{s}^{3/2}).
\end{split}
\end{equation*}

For $I_{23}^{1}$, we have
\begin{equation*}
\begin{split}
I_{23}^{1}=&\frac{\epsilon^{2}}{2}\sum _{j_{1}\in\{\pm1\}}\int\frac{1}{q}\partial_{x}^{\ell+1}(q\phi_{2}q\big(R_{j_{1}}^{1}-R_{-j_{1}}^{1})\big)B^{1,1,1}_{j_{1},-j_{1}}(\phi_{c} , \partial_{x}^{\ell}R_{-j_{1}}^{1})dx\\
&+\int\frac{1}{q}\partial_{x}^{\ell+1}\big(q\phi_{2}q(R_{j_{1}}^{1}-R_{-j_{1}}^{1})\big)B^{1,1,1}_{-j_{1},j_{1}}(\phi_{c} , \partial_{x}^{\ell}R_{j_{1}}^{1})dx\\
=&0.
\end{split}
\end{equation*}

For $I_{24}^{1}$, we have
\begin{equation*}
\begin{split}
I_{24}^{1}=&\frac{\epsilon^{2}}{2}\sum _{j_{1}\in\{\pm1\}}\Big[-j_{1}\int\frac{1}{q}\partial_{x}^{\ell+1}\big(\phi_{1}(R_{j_{1}}^{1}+R_{-j_{1}}^{1})\big)B^{1,1,1}_{j_{1},-j_{1}}(\phi_{c} , \partial_{x}^{\ell}R_{-j_{1}}^{1})dx\\
&+j_{1}\int\frac{1}{q}\partial_{x}^{\ell+1}\big(\phi_{1}(R_{j_{1}}^{1}+R_{-j_{1}}^{1})\big)B^{1,1,1}_{-j_{1},j_{1}}(\phi_{c} , \partial_{x}^{\ell}R_{j_{1}}^{1})dx\Big]\\
=&\epsilon^{2}\sum _{j_{1}\in\{\pm1\}}j_{1}\int\frac{1}{q}\partial_{x}^{\ell+1}\big(\phi_{1}(R_{j_{1}}^{1}+R_{-j_{1}}^{1})\big)B^{1,1,1}_{-j_{1},j_{1}}(\phi_{c} , \partial_{x}^{\ell}R_{j_{1}}^{1})dx\\
=&-\frac{\epsilon^{2}}{2}\sum _{j_{1}\in\{\pm1\}}j_{1}\int\phi_{c}\phi_{1}\partial_{x}^{\ell}R_{j_{1}}^{1}\partial_{x}^{\ell+1}R_{-j_{1}}^{1}dx
+\epsilon^{2}\mathcal{O}(\mathcal{E}_{s}+\epsilon^{3/2}\mathcal{E}_{s}^{3/2})\\
=&-\frac{\epsilon^{2}}{2}\int\phi_{c}\phi_{1}\partial_{x}^{\ell}(R_{1}^{1}+R_{-1}^{1})\partial_{x}^{\ell+1}(R_{1}^{1}-R_{-1}^{1})dx
+\epsilon^{2}\mathcal{O}(\mathcal{E}_{s}+\epsilon^{3/2}\mathcal{E}_{s}^{3/2})\\
=&-\frac{\epsilon^{2}}{2}\frac{d}{dt}\int\frac{\phi_{c}\phi_{1}}{2+\epsilon\phi_{1}}(\partial_{x}^{\ell}(R_{1}^{1}+R_{-1}^{1}))^{2}
+\epsilon^{2}\mathcal{O}(\mathcal{E}_{s}+\epsilon^{3/2}\mathcal{E}_{s}^{3/2}).
\end{split}
\end{equation*}

For $I_{25}^{1}$, we have
\begin{equation*}
\begin{split}
I_{25}^{1}=&\frac{\epsilon^{2}}{2}\sum _{j_{1}\in\{\pm1\}}-j_{1}\int\frac{1}{q}\partial_{x}^{\ell+1}\langle|\partial_{x}|\rangle^{-2}
\big(\langle|\partial_{x}|\rangle^{-2}\phi_{1}\langle|\partial_{x}|\rangle^{-2}(R_{j_{1}}^{1}+R_{-j_{1}}^{1})\big)B^{1,1,1}_{j_{1},-j_{1}}(\phi_{c} , \partial_{x}^{\ell}R_{-j_{1}}^{1})dx\\
&+j_{1}\int\frac{1}{q}\partial_{x}^{\ell+1}\langle|\partial_{x}|\rangle^{-2}
\big(\langle|\partial_{x}|\rangle^{-2}\phi_{1}\langle|\partial_{x}|\rangle^{-2}(R_{j_{1}}^{1}+R_{-j_{1}}^{1})\big)B^{1,1,1}_{-j_{1},j_{1}}(\phi_{c} , \partial_{x}^{\ell}R_{j_{1}}^{1})dx\\
=&\epsilon^{2}j_{1}\int\frac{1}{q}\partial_{x}^{\ell+1}\langle|\partial_{x}|\rangle^{-2}
\big(\langle|\partial_{x}|\rangle^{-2}\phi_{1}\langle|\partial_{x}|\rangle^{-2}(R_{j_{1}}^{1}+R_{-j_{1}}^{1})\big)B^{1,1,1}_{-j_{1},j_{1}}(\phi_{c} , \partial_{x}^{\ell}R_{j_{1}}^{1})dx\\
=&\epsilon^{2}\mathcal{O}(\mathcal{E}_{s}+\epsilon^{3/2}\mathcal{E}_{s}^{3/2}).
\end{split}
\end{equation*}

For $I_{26}^{1}$, we have
\begin{equation*}
\begin{split}
I_{26}^{1}=&\frac{\epsilon^{3}}{2}\sum _{j_{1}\in\{\pm1\}}j_{1}\int\frac{1}{q}\partial_{x}^{\ell+1}\big(\mathcal{G}(R_{j_{1}}^{1}+R_{-j_{1}}^{1})\big)B^{1,1,1}_{j_{1},-j_{1}}(\phi_{c} , \partial_{x}^{\ell}R_{-j_{1}}^{1})dx\\
&-j_{1}\int\frac{1}{q}\partial_{x}^{\ell+1}\big(\mathcal{G}(R_{j_{1}}^{1}+R_{-j_{1}}^{1})\big)B^{1,1,1}_{-j_{1},j_{1}}(\phi_{c} , \partial_{x}^{\ell}R_{j_{1}}^{1})dx\\
=&-\epsilon^{3}\sum _{j_{1}\in\{\pm1\}}j_{1}\int\frac{1}{q}\partial_{x}^{\ell+1}\big(\mathcal{G}(R_{j_{1}}^{1}+R_{-j_{1}}^{1})\big)B^{1,1,1}_{-j_{1},j_{1}}(\phi_{c} , \partial_{x}^{\ell}R_{j_{1}}^{1})dx\\
=&-\frac{\epsilon^{3}}{2}\int\phi_{c}\mathcal{G}\partial_{x}^{\ell}(R_{1}^{1}+R_{-1}^{1})\partial_{x}^{\ell+1}(R_{1}^{1}-R_{-1}^{1})dx
+\epsilon^{2}\mathcal{O}(\mathcal{E}_{s}+\epsilon^{3/2}\mathcal{E}_{s}^{3/2})\\
=&-\frac{\epsilon^{3}}{2}\frac{d}{dt}\int\frac{\phi_{c}\mathcal{G}}{2+\epsilon\phi_{1}}(\partial_{x}^{\ell}(R_{1}^{1}+R_{-1}^{1}))^{2}dx
+\epsilon^{2}\mathcal{O}(\mathcal{E}_{s}+\epsilon^{3/2}\mathcal{E}_{s}^{3/2}).
\end{split}
\end{equation*}
Similar to $I_{25}^{1}$, we have $I_{27}^{1}=\epsilon^{2}\mathcal{O}(\mathcal{E}_{s}+\epsilon^{3/2}\mathcal{E}_{s}^{3/2})$.

Then, we have
\begin{equation*}
\begin{split}
I_{2}^{1}
=-\frac{\epsilon^{2}}{2}\frac{d}{dt}\int\frac{\phi_{c}(\phi_{1}+\epsilon\mathcal{G})}{2+\epsilon\phi_{1}}\big(\partial_{x}^{\ell}(R_{1}^{1}+R_{-1}^{1})\big)^{2}dx
+\epsilon^{2}\mathcal{O}(\mathcal{E}_{s}+\epsilon^{3/2}\mathcal{E}_{s}^{3/2}).
\end{split}
\end{equation*}
Similarly,  by \eqref{151}, \eqref{18} and Lemma \ref{L9}, we have
\begin{equation*}
\begin{split}
I_{2}^{2}
=\frac{\epsilon^{2}}{2}\frac{d}{dt}\int\frac{\phi_{c}q\phi_{2}}{2+\epsilon\phi_{1}}\big(\partial_{x}^{\ell}(R_{1}^{1}+R_{-1}^{1})\big)^{2}dx
+\epsilon^{2}\mathcal{O}(\mathcal{E}_{s}+\epsilon^{3/2}\mathcal{E}_{s}^{3/2}).
\end{split}
\end{equation*}
By \eqref{152}, \eqref{18} and Lemma \ref{L9}, we have
\begin{equation*}
\begin{split}
I_{2}^{3}
=\frac{\epsilon^{2}}{2}\frac{d}{dt}\int\frac{\phi_{c}q\phi_{2}}{2+\epsilon\phi_{1}}\big(\partial_{x}^{\ell}(R_{1}^{1}+R_{-1}^{1})\big)^{2}dx
+\epsilon^{2}\mathcal{O}(\mathcal{E}_{s}+\epsilon^{3/2}\mathcal{E}_{s}^{3/2}).
\end{split}
\end{equation*}
By \eqref{153}, \eqref{17} and Lemma \ref{L9}, we have
\begin{equation*}
\begin{split}
I_{2}^{4}&=-\frac{\epsilon^{2}}{2}\frac{d}{dt}\int\frac{\phi_{c}(\phi_{2} -\epsilon\mathcal{G})}{2+\epsilon\phi_{1}}\big(\partial_{x}^{\ell}(R_{1}^{1}+R_{-1}^{1})\big)^{2}dx+\epsilon^{2}\mathcal{O}(\mathcal{E}_{s}+\epsilon^{3/2}\mathcal{E}_{s}^{3/2}),\\
I_{2}^{5}&=\epsilon^{2}\mathcal{O}(\mathcal{E}_{s}+\epsilon^{3/2}\mathcal{E}_{s}^{3/2}).
\end{split}
\end{equation*}
Then we have
\begin{equation*}
\begin{split}
\sum_{n=1}^{5}I_{2}^{n}
=-\frac{\epsilon^{2}}{2}\frac{d}{dt}\int\frac{\phi_{c}(\phi_{1}+\phi_{2} +2q\phi_{2})}{2+\epsilon\phi_{1}}\big(\partial_{x}^{\ell}(R_{1}^{1}+R_{-1}^{1})\big)^{2}dx
+\epsilon^{2}\mathcal{O}(\mathcal{E}_{s}+\epsilon^{3/2}\mathcal{E}_{s}^{3/2}).
\end{split}
\end{equation*}
Recall \eqref{equ11-1}, similarly, by the Lemma \ref{L9}, we have
\begin{equation*}
\begin{split}
I_{3}
=\frac{\epsilon^{2}}{2}\frac{d}{dt}\int\frac{\mathcal{G}}{2+\epsilon\phi_{1}}\big(\partial_{x}^{\ell}(R_{1}^{1}+R_{-1}^{1})\big)^{2}dx
+\epsilon^{2}\mathcal{O}(\mathcal{E}_{s}+\epsilon^{3/2}\mathcal{E}_{s}^{3/2}).
\end{split}
\end{equation*}
Hence, we define the modified energy
\begin{equation*}
\begin{split}
\widetilde{\mathcal{E}_{s}}=\mathcal{E}_{s}+\frac{\epsilon^{2}}{4}\sum_{\ell=1}^{s}h_{\ell},
\end{split}
\end{equation*}
with
\begin{equation*}
\begin{split}
h_{\ell}=\int_{\mathbb{R}}&\big((2\ell+1)q^{2}(-\phi_{c}+\partial_{x}^{2}\phi_{c})(2\phi_{1}-\epsilon\mathcal{G})
+\phi_{c}(\phi_{1}+\phi_{2}-2q\phi_{2})-\mathcal{G}\big)\big(\partial_{x}^{\ell}(R_{1}^{1}+R_{-1}^{1})\big)^{2}dx,
\end{split}
\end{equation*}
to obtain
\begin{equation*}
\begin{split}
\partial_{t}\widetilde{\mathcal{E}_{s}}\lesssim\epsilon^{2}(\widetilde{\mathcal{E}_{s}}
+\epsilon^{1/2}\widetilde{\mathcal{E}_{s}}^{3/2}+\epsilon\widetilde{\mathcal{E}_{s}}^{2}+1).
\end{split}
\end{equation*}
Consequently, Gronwall's inequality yields the $\mathcal{O}(1)$ boundedness of $\widetilde{\mathcal{E}_{s}}$ for all $t\in[0,T_{0}/\epsilon^{2}]$, for sufficiently small $\epsilon>0$. Theorem \ref{Thm1} then follows thanks to the fact that $\|\mathcal{R}^{0}+R^{1}\|_{H^{s}}\lesssim\sqrt{\widetilde{\mathcal{E}_{s}}}$ for sufficiently small $\epsilon>0$ and estimate \eqref{equation12-2} .

\bigskip
\paragraph
{\bf Aknowledgement.} The second author thanks Professor Yan Guo for his valuable discussions and suggestions on this paper.% The authors thank the referees for their helpful comments of the manuscript.

%\bigskip
%Funding: This study was funded by the National Natural Science Foundation of China (grant number 11871172).

%Conflict of Interest: The authors declare that they have no conflict of interest.

\end{document}